\documentclass[10.5pt]{article}
\usepackage{mathtools}

\usepackage{amsmath,amssymb,amstext,amsfonts}
\usepackage{amsthm}
\usepackage{enumerate}
\usepackage{tcolorbox}
\usepackage{lipsum}
\usepackage{amsfonts}
\usepackage{graphicx}
\usepackage{epstopdf}
\usepackage{algorithm}
\usepackage{algorithmic}
\usepackage{booktabs}
\usepackage[numbers]{natbib}

\usepackage{subcaption}
\usepackage{float}
\usepackage[margin=0.9in]{geometry}
\usepackage{listings}
\usepackage{xcolor}
\usepackage{subcaption} 

\usepackage[framemethod=TikZ]{mdframed}

\usepackage{algorithm}
\usepackage{algorithmic}

\mathtoolsset{showonlyrefs=true}
\newtheorem{Problem}{Problem}

\usepackage{graphicx}
\usepackage[utf8]{inputenc}
\newtheorem{theorem}{Theorem}[section]
\newtheorem{lemma}[theorem]{Lemma}
\newtheorem{proposition}[theorem]{Proposition}

\newtheorem{assumption}{Assumption}
\usepackage{systeme}
\usepackage{comment}
\usepackage{dsfont}
\newtheorem*{remark}{Remark}

\tcbuselibrary{theorems}
\theoremstyle{definition}

\newcounter{texercise}
\newwrite\solout
\def\openoutsol{\immediate\openout\solout\jobname.sol}  \def\writesol#1{\immediate\write\solout{\noexpand\processsol{\thetexercise}{#1}}}

\newcounter{mytheorem}[section]

\numberwithin{equation}{section}
\usepackage[english]{babel}
\usepackage[bookmarks]{hyperref}

\usepackage{hyperref}
\hypersetup{
    linkcolor=blue,
    filecolor=magenta,      
    urlcolor=cyan,
    pdfpagemode=FullScreen,
}
\DeclareMathOperator{\Tr}{Tr}

\newcommand{\bR}{\mathbb{R}}

\newcommand{\E}{\mathbb{E}}
\newcommand{\mcal}[1]{\mathcal{#1}}

\begin{document}
\title{A Sample-Wise Adjoint Regression Framework for Mean-Field Control with Connections to Adjoint Matching}
\author{
Hui Sun \thanks{Department of Financial and Actuarial Mathematics, 
School of Mathematics \& Physics. Xi’an Jiaotong-Liverpool University, Suzhou 215123, P.R.China. \textbf{Funding:} Research Development Fund (RDF) Xi’an Jiaotong-Liverpool University (XJTLU), No.: RDF-24-02-029.  \tt{Email:hui.sun@xjtlu.edu.cn}}
}
\date{This Version: \today }
\maketitle

\begin{abstract}
This work proposes a novel numerical approach for solving mean-field control (MFC) problems using an adjoint-based optimization framework motivated by the stochastic maximum principle (SMP). Rather than solving the adjoint processes $(Y_t,Z_t)$ explicitly, we construct sample-wise unbiased estimators of their discretized counterparts and use them to approximate the Hamiltonian control gradient through recursive regression. The control is then updated by a gradient-descent scheme. This approach differs from conventional deep-learning methods, which typically follow a ``discretize-then-optimize'' paradigm and directly optimize a globally parameterized control through the discretized objective. Numerical experiments demonstrate competitive, and in many cases improved performance compared with direct deep-learning approaches. The sample-wise and regression-based structure also supports scalable implementation in high-dimensional settings and makes the method attractive for generative modeling problems involving particle-based representations and distributional objectives. On the theoretical side, we establish a connection between a particular class of MFC problems and the \emph{adjoint matching} framework. Using the SMP under the mean field control setting, we further show that the self-consistent critical point of the resulting mean-field adjoint matching loss coincides with the optimal control.
\end{abstract}
\noindent \textbf{Key words:} Stochastic optimal control, Mean field control, mesh-free methods, randomized neural networks, stochastic maximum principle.


\noindent \textbf{MSC Classification:} 65C20, 60H10, 60H30

\section{Introduction}
Stochastic optimal control problems (SOCPs) have found broad applications across science and engineering \cite{Hui1, Bensoussan1_T, carmona1, Carmona2, jiequn2, Du1, Zhao1, jiequn1, Hanson, Peng1, Peng4, pham1, pham2, pham3, Hui2}. In recent years, the subject has increasingly attracted attention, particularly with the emergence of machine learning and deep learning assisted computational methods. A substantial literature has been developed on both the theoretical foundations of stochastic optimal control \cite{Bensoussan1_T, Yong1, Peng1, Haussmann, pham1} and its numerical approximation \cite{Zhao1, Du1, Peng3, Hanson, Hui2, Kushner}. Among the early deep-learning approaches, Han et al.~\cite{jiequn1} parameterized the control function by a deep neural network and trained the network by minimizing the underlying cost functional. Related ideas have subsequently been applied to mean-field control and mean-field games in \cite{carmona1, Carmona2, Carmona3}. In these approaches, the distribution $\mcal{L}_X$ appearing in the coefficients of the McKean--Vlasov SDE is approximated by the empirical distribution of an interacting particle system, motivated by propagation-of-chaos arguments, while the resulting distributional interactions are approximated through ensemble averages. This particle based formulation integrates naturally with standard deep learning architectures and optimization procedures. At the same time, the close connection between stochastic optimal control and Hamilton--Jacobi--Bellman (HJB) equations provides an alternative route for the numerical treatment of high dimensional PDEs. Analogously, mean-field control is related to Bellman equations posed on the Wasserstein space of probability measures \cite{pham4}. These control-theoretic structures have been exploited for solving high-dimensional PDEs; see, for example, \cite{Cai,Lars}. Numerical methods for PDEs and value functions involving distributional dependence have also been developed in, among others, \cite{Teichmann,pham3}.

These approaches largely fall within a ``discretize-then-optimize'' framework: the control problem is first discretized, after which the numerical solver used to simulate the controlled SDE is differentiated through during optimization. Although conceptually straightforward and successful in many applications, this strategy can be memory intensive, since intermediate solver states and computational graphs must be retained or recomputed, often requiring techniques such as gradient checkpointing. Moreover, the resulting training procedure is largely black-box in nature and does not explicitly exploit much of the classical analytical structure of the underlying control problem. In contrast, the stochastic maximum principle (SMP) provides a systematic approach to SOCP and MFC; see, for example, Section~4.4 of \cite{Carmona0} and \cite{Anderson}. From a numerical perspective, the SMP is naturally ``optimize-then-discretize'' in spirit: the first-order optimality system is derived at the continuous level before being approximated numerically.

More specifically, the stochastic maximum principle (SMP) \cite{Peng4,Carmona0} characterizes the first variation of the cost functional through an adjoint system, which in the mean-field setting is typically represented by a mean-field backward stochastic differential equation (mean-field BSDE). Under suitable regularity and convexity conditions, the optimal control can be characterized through minimization of the associated Hamiltonian. Even when a closed-form optimizer is not available, the resulting directional derivative of the cost functional still provides sufficient information for constructing gradient-based numerical schemes for the underlying stochastic optimal control problem.

For the classical stochastic optimal control problem, \cite{Zhao1} develops a projected gradient-descent method in which the discretized adjoint processes $(Y_t,Z_t)$ are solved at each iteration and subsequently used to approximate the control gradient. Such an approach, however, relies heavily on spatial discretization and mesh-based approximation, and is therefore subject to the curse of dimensionality. The difficulty becomes even more pronounced in mean-field control, where the coefficients and adjoint equations may additionally depend on the distribution of the state. In \cite{Hui1,Hui3}, it is observed that the adjoint processes need not be reconstructed explicitly as functions of the state. Instead, sample-wise or particle approximations of the adjoint variables can be combined with stochastic gradient descent to obtain efficient numerical schemes in higher dimensions. A limitation of these approaches, however, is that the admissible controls are restricted to deterministic functions of time, whereas the optimal control in many stochastic control problems is naturally of feedback form.

In light of the discussion above, in this work, we design a new maximum based adjoint approach for solving MFC where the backbone is the sample-wise approximation of the discretized mean-field BSDE. More specifically, we construct and discretize the adjoint mean-field forward-backward SDEs to obtain $(Y^N_{t_n}, Z^N_{t_n})$. Instead of solving this decoupled forward-backward system explicitly, we first design a backward sample-wise Monte Carlo type scheme whose conditional expectation equals the $(Y^N_{t_n}, Z^N_{t_n})$, then we use a propagation of Chaos type approximation to achieve a full discretized approximation to $(Y^N_{t_n}, Z^N_{t_n})$. Importantly, no training of neural network is needed as the goal is to only obtain approximately unbiased estimate of $(Y^N_{t_n}, Z^N_{t_n})$. As such, the sample-wise approximation of the gradient loss objectives are achieved and its functional form is obtained via regression. Finally, the objective control is updated via the descent algorithm in the functional form. 

On the theoretical side, we identify a direct connection between the sample-wise adjoint approximation developed in this work and the \emph{Adjoint Matching} framework of \cite{DomingoEnrich1}. The stochastic optimal control problem considered there is recovered as a special case of our MFC formulation when the distributional dependence is removed and the dynamics are restricted to a control-affine drift with quadratic control cost and state-independent diffusion. In this case, the adjoint equation arising from the stochastic maximum principle coincides with the \emph{lean} pathwise adjoint ODE of \cite{DomingoEnrich1}, since the control is treated as an independent argument of the Hamiltonian and derivatives of the feedback map do not enter the adjoint equation. Its time discretization therefore has the same form as the sample-wise approximation of the discretized adjoint process $Y^N_{\cdot}$ used in our numerical scheme.

Moreover, the lean Adjoint Matching formulation in \cite{DomingoEnrich1} reduces stochastic optimal control to a recursive regression problem and is shown to have the optimal control as its self-consistent critical point. We extend this connection to the mean-field setting, where additional Lions-derivative terms arise from the dependence on the state distribution. We derive the corresponding mean-field lean adjoint matching formulation and relate it to the sample-wise adjoint approximation in our general MFC scheme. Using the SMP under the MFC setting with sufficient regularity assumptions, we show that the self-consistent critical point of the mean-field adjoint matching loss coincides with the optimal mean-field control.

The formulation of our work could be useful and meaningful for practical applications in data science. For instance, the MFC formulation considered in this work also naturally encompasses the soft-constrained Schr{\"o}dinger bridge problem studied in \cite{MaTanXu1}. In its stochastic control formulation, the classical Schr{\"o}dinger bridge problem can be viewed as a control-affine diffusion with a quadratic control cost, subject to the hard terminal constraint that the law of $X_T$ coincides with a prescribed target distribution. The soft-constrained formulation relaxes this requirement by allowing the terminal law to deviate from the target distribution and introducing a penalty term that measures this discrepancy in the objective functional. As a result, the constrained Schr{\"o}dinger bridge problem is approximated by a family of unconstrained McKean--Vlasov control problems, where the mean-field dependence enters through the terminal distributional cost. Under suitable assumptions, \cite{MaTanXu1} establishes the existence of optimal controls for the penalized problems and shows that, as the penalty parameter increases, the corresponding optimal controls and value functions converge to those of the classical Schr{\"o}dinger bridge problem, with a linear convergence rate. This connection suggests that the numerical MFC framework developed here may also provide a natural approach for soft-constrained Schr{\"o}dinger bridge problems arising in generative modeling. In addition, as in \cite{pham5}, the running cost may incorporate a discrepancy term between the law $\mcal{L}_{X_t}$ of the controlled state and a prescribed family of intermediate target distributions $\rho_t$. This allows the generative task to enforce not only terminal distribution matching but also intermediate law constraints along the entire time horizon. Compared with \cite{pham5}, our formulation is developed within a more general MFC framework and adopts a different numerical strategy rather than relying on neural residual solvers of the type introduced in \cite{HanJentzen1} to approximate the associated McKean--Vlasov FBSDE system, we optimize the control through sample-wise adjoint estimation and regression-based updates. This leads to a comparatively direct implementation while retaining the ability to handle both terminal and intermediate distributional objectives. 

As a further contribution, the proposed framework naturally extends to control of the diffusion coefficient. Consider
\begin{align}
    dX_t=b(t,X_t,\mu_t,u_t)\,dt+\sigma(t,X_t,\mu_t,u_t)\,dW_t,
\end{align}
with Hamiltonian $ H(t,x,\mu,y,z,u)=b(t,x,\mu,u)^Ty+\text{tr}(\sigma(t,x,\mu,u)^Tz)+f(t,x,\mu,u).$ Then
\begin{align}
    \partial_u H
    =(\partial_u b)^Ty+\text{tr}(\partial_u\sigma^Tz)+\partial_u f.
\end{align}
Hence, once the diffusion coefficient is controlled, the adjoint component $Z_t$ enters the control gradient directly. Since our numerical scheme already provides sample-wise approximations of the discretized $Z$ process, the same regression-based optimization procedure can be used to update both the drift and diffusion controls. This gives an additional degree of flexibility beyond the standard control-affine drift formulation and is potentially useful in generative modeling, where both transport and stochasticity influence the evolution of the target distribution. While the correctness of diffusion control is demonstrated in the classical linear quadratic MFC examples presented later, its potential benefits for generative modeling remain to be investigated systematically in future work.

\section{The Mean Field Control problem}
\subsection{Notation and background}
In this section, we introduce the notation and background material needed for the subsequent algorithm design and analysis. We denote by $\mcal{P}_2(\bR^d)$ the space of probability measures on $\bR^d$ with finite second moment, i.e.,
\begin{align}
    \mcal{P}_2(\bR^d):=\left\{\mu\in\mcal{P}(\bR^d):\int_{\bR^d}|x|^2\mu(dx)<\infty\right\}.
\end{align}
For $\mu,\nu\in\mcal{P}_2(\bR^d)$, let $\Pi(\mu,\nu)$ denote the set of couplings of $\mu$ and $\nu$, namely,
\begin{align}
    \Pi(\mu,\nu):=\left\{\pi\in\mcal{P}(\bR^d\times\bR^d):\pi(A\times\bR^d)=\mu(A),\ \pi(\bR^d\times B)=\nu(B)\right\},
\end{align}
for all Borel sets $A,B\subset\bR^d$. The $2$-Wasserstein distance between $\mu$ and $\nu$ is then defined by
\begin{align}
    W_2^2(\mu,\nu):=\inf_{\pi\in\Pi(\mu,\nu)}\int_{\bR^d\times\bR^d}|x-y|^2\pi(dx,dy).
\end{align}

In section 2.4, we will perform analysis on the space of probability measures, for which a notion of differentiation with respect to a probability measure is required. We use the Lions derivative, introduced by P.-L. Lions through a lifting argument on a Hilbert space. Let
\begin{align}
    u:\mcal{P}_2(\bR^d)\rightarrow\bR,
\end{align}
and consider its lift
\begin{align}
    U:L^2(\Omega;\bR^d)\rightarrow\bR,\qquad U(X):=u(\mcal{L}_X),
\end{align}
where $\mcal{L}_X$ denotes the law of the random variable $X$. We say that $u$ is Lions differentiable at $\mu\in\mcal{P}_2(\bR^d)$ if, for some $X\in L^2(\Omega;\bR^d)$ satisfying $\mcal{L}_X=\mu$, the lifted function $U$ is Fr\'echet differentiable at $X$. By the Riesz representation theorem, we identify the Fr\'echet derivative $DU(X)$ with an element of $L^2(\Omega;\bR^d)$ such that
\begin{align}
    DU(X)[Y]=\E\left[DU(X)\cdot Y\right],\qquad Y\in L^2(\Omega;\bR^d).
\end{align}
The derivative admits a representation of the form
\begin{align}
    DU(X)=D_\mu u(\mcal{L}_X)(X),
\end{align}
where $D_\mu u(\mu):\bR^d\rightarrow\bR^d$ is a deterministic measurable function defined $\mu$-a.e.. We refer to $D_\mu u(\mu)(x)$ as the Lions derivative of $u$ at $\mu$, evaluated at $x$. If a version of $D_\mu u(\mu)(x)$ can be chosen such that the mapping
\begin{align}
    (\mu,x)\longmapsto D_\mu u(\mu)(x)
\end{align}
is jointly continuous, we say that $u$ is continuously Lions differentiable and write $u\in C^1(\mcal{P}_2(\bR^d))$.

\subsection{Design of Algorithm}
In this section, we introduce the mean-field control problem and the associated stochastic maximum principle. We then discuss the design of a sample-based numerical algorithm.

Let $U\subset\bR^{d_1}$ denote the control space. We consider the mean-field control problem
\begin{align}
    \inf_{u\in\mcal{U}} J(u), \qquad J(u):=\E\left[\int_0^T f(t,X_t,\mcal{L}(X_t),u_t)dt+g(X_T,\mcal{L}(X_T))\right],
\end{align}
subject to the controlled McKean--Vlasov SDE
\begin{align}
    dX_t=b(t,X_t,\mcal{L}(X_t),u_t)dt+\sigma(t,X_t,\mcal{L}(X_t),u_t)dW_t, \qquad X_0\sim\mu_0,
\end{align}
where $\mu_0\in\mcal{P}_2(\bR^d)$ and $X_0$ is assumed to be independent of $W$. The coefficients are measurable functions
\begin{align}
    (b,\sigma):[0,T]\times\bR^d\times\mcal{P}_2(\bR^d)\times U\rightarrow\bR^d\times\bR^{d\times m}.
\end{align}
The set $\mcal{U}$ of admissible controls consists of all $U$-valued progressively measurable processes $u=(u_t)_{0\leq t\leq T}$ satisfying
\begin{align}
    \E\left[\int_0^T|u_t|^2dt\right]<\infty, \qquad \E\left[\int_0^T\left(|b(t,0,\delta_0,u_t)|^2+|\sigma(t,0,\delta_0,u_t)|^2\right)dt\right]<\infty,
\end{align}
where $\delta_0$ denotes the Dirac measure at $0$.

We impose the following standard assumptions on the coefficients and cost functions, see for instance \cite{Carmona0}, section 4.4. 
\begin{assumption}\label{ass:ass_mfc}
\begin{enumerate}
\item For every $x\in\bR^d$, $\mu\in\mcal{P}_2(\bR^d)$, and $a\in U$,
\begin{align}
    \int_0^T\left(|b(t,x,\mu,a)|^2+|\sigma(t,x,\mu,a)|^2\right)dt<\infty.
\end{align}

\item There exists a constant $C>0$ such that, for every $t\in[0,T]$, $a\in U$, $x_1,x_2\in\bR^d$, and $\mu_1,\mu_2\in\mcal{P}_2(\bR^d)$,
\begin{align}
    |b(t,x_1,\mu_1,a)-b(t,x_2,\mu_2,a)|+|\sigma(t,x_1,\mu_1,a)-\sigma(t,x_2,\mu_2,a)|\leq C\left(|x_1-x_2|+W_2(\mu_1,\mu_2)\right).
\end{align}

\item The running cost $f$ and terminal cost $g$ have at most quadratic growth. More precisely, there exists a constant $C>0$ such that, for every $t\in[0,T]$, $x\in\bR^d$, $\mu\in\mcal{P}_2(\bR^d)$, and $a\in U$,
\begin{align}
    |f(t,x,\mu,a)|\leq C\left(1+|x|^2+|a|^2+\int_{\bR^d}|y|^2\mu(dy)\right), \qquad |g(x,\mu)|\leq C\left(1+|x|^2+\int_{\bR^d}|y|^2\mu(dy)\right).
\end{align}
\end{enumerate}
\end{assumption}
In the following, we review the form of the Stochastic Maximum Principle (SMP) for MFC, and refer readers to \cite{Carmona0} and \cite{Anderson} for more details. We further make the following assumption on top of Assumption \ref{ass:ass_mfc}: 
\begin{assumption}\label{ass:reg}
    For every $t\in[0,T]$, the functions $b(t,\cdot,\cdot,\cdot)$, $\sigma(t,\cdot,\cdot,\cdot)$, and $f(t,\cdot,\cdot,\cdot)$ are continuously differentiable with respect to $(x,u)$ and Lions differentiable with respect to $\mu$, while $g$ is continuously differentiable with respect to $x$ and Lions differentiable with respect to $\mu$. The derivatives $\partial_xb$, $\partial_ub$, $\partial_x\sigma$, $\partial_u\sigma$, $D_\mu b$, and $D_\mu\sigma$ are continuous and uniformly bounded. Moreover, $\partial_xf$, $\partial_uf$, $D_\mu f$, $\partial_xg$, and $D_\mu g$ are continuous and have at most linear growth.
\end{assumption}
We define the Hamiltonian associated with the mean-field control problem by
\begin{align}
    H(t,x,\mu,y,z,u):=f(t,x,\mu,u)+b(t,x,\mu,u)^\top y+\Tr\left(\sigma(t,x,\mu,u)^\top z\right),
\end{align}
for
$(t,x,\mu,y,z,u)\in[0,T]\times\bR^d\times\mcal{P}_2(\bR^d)\times\bR^d\times\bR^{d\times m}\times U$. For a given admissible control $u\in\mcal{U}$, let $X$ denote the corresponding controlled state process and set $\mu_t:=\mcal{L}(X_t)$. The associated adjoint processes $(Y,Z)$ are defined as the solution of the mean-field BSDE
\begin{equation}\label{mf_fbsde}
\begin{cases}
    dY_t=-\left(\partial_xH(t,X_t,\mu_t,Y_t,Z_t,u_t)+\widehat{\E}\left[D_\mu H(t,\widehat{X}_t,\mu_t,\widehat{Y}_t,\widehat{Z}_t,\widehat{u}_t)(X_t)\right]\right)dt+Z_tdW_t,\\
    Y_T=\partial_xg(X_T,\mu_T)+\widehat{\E}\left[D_\mu g(\widehat{X}_T,\mu_T)(X_T)\right],
\end{cases}
\end{equation}
where $(\widehat{X},\widehat{Y},\widehat{Z},\widehat{u})$ denotes an independent copy of $(X,Y,Z,u)$ and $\widehat{\E}$ denotes expectation with respect to the copy variable.

Let $u,v\in\mcal{U}$ and assume that $\mcal{U}$ is convex. Define
\begin{align}
    u_t^\epsilon:=u_t+\epsilon(v_t-u_t),\qquad \beta_t:=v_t-u_t.
\end{align}
Then, by Corollary 4.23 in \cite{Carmona0}, the G\^ateaux derivative of $J$ at $u$ in the direction $\beta$ is given by
\begin{align}
    \left.\frac{d}{d\epsilon}J(u^\epsilon)\right|_{\epsilon=0}
    =\E\left[\int_0^T\partial_uH(t,X_t,\mu_t,Y_t,Z_t,u_t)^\top\beta_t\,dt\right]. \label{eq:gradient_mfc}
\end{align}
Consequently, if $u^*\in\mcal{U}$ is an optimal control, then
\begin{align}
    \E\left[\int_0^T\partial_uH(t,X_t^*,\mu_t^*,Y_t^*,Z_t^*,u_t^*)^\top(v_t-u_t^*)\,dt\right]\geq0,\qquad \forall v\in\mcal{U}. \label{eq:necessary_smp}
\end{align}
This provides the first-order necessary optimality condition and, in particular, gives the gradient information required for the construction of a gradient-based numerical algorithm.

Under an additional convexity condition on the Hamiltonian with respect to the control variable, the variational condition \eqref{eq:necessary_smp} yields the pointwise necessary condition of the stochastic maximum principle:
\begin{align}
    H(t,X_t^*,\mu_t^*,Y_t^*,Z_t^*,u_t^*)=\inf_{a\in U}H(t,X_t^*,\mu_t^*,Y_t^*,Z_t^*,a),\qquad dt\otimes d\mathbb{P}\text{-a.e.} \label{eq:necessary_hamiltonian}
\end{align}
This is the mean-field analogue of the classical Pontryagin stochastic maximum principle; see Theorem 4.24 in \cite{Carmona0}. We next recall the sufficient form of the stochastic maximum principle; see Theorem 4.25 in \cite{Carmona0}.

\begin{theorem}\label{them:sufficient_smp}
Suppose that Assumption \ref{ass:ass_mfc} and \ref{ass:reg} hold. Let $u\in\mcal{U}$ be an admissible control, let $X$ be the corresponding state process, and let $(Y,Z)$ be the associated adjoint processes. Assume further that
\begin{enumerate}
    \item the mapping $(x,\mu)\mapsto g(x,\mu)$ is L-convex;
    \item for $dt\otimes d\mathbb{P}$-a.e. $(t,\omega)$, the mapping $(x,\mu,a)\mapsto H(t,x,\mu,Y_t,Z_t,a)$ is L-convex;
    \item the Hamiltonian is minimized along the controlled trajectory, i.e.,
    \begin{align}
        H(t,X_t,\mu_t,Y_t,Z_t,u_t)=\inf_{a\in U}H(t,X_t,\mu_t,Y_t,Z_t,a),\qquad dt\otimes d\mathbb{P}\text{-a.e.}
    \end{align}
\end{enumerate}
Then $u$ is an optimal control:
\begin{align}
    J(u)=\inf_{v\in\mcal{U}}J(v).
\end{align}
\end{theorem}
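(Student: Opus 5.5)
We follow the standard convexity argument for the sufficient maximum principle (Theorem 4.25 in \cite{Carmona0}). Fix an arbitrary admissible $v\in\mcal{U}$, with state process $X'$ and marginal laws $\mu'_t:=\mcal{L}(X'_t)$. The goal is $J(u)-J(v)\le 0$. Split the difference into a terminal part and a running part:
\begin{align}
J(u)-J(v)=\E\left[g(X_T,\mu_T)-g(X'_T,\mu'_T)\right]+\E\left[\int_0^T\big(f(t,X_t,\mu_t,u_t)-f(t,X'_t,\mu'_t,v_t)\big)dt\right].
\end{align}

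\textbf{Terminal term.} L-convexity of $g$ gives, for random variables realizing the laws,
\begin{align}
g(x',\mu')\ge g(x,\mu)+\partial_x g(x,\mu)\cdot(x'-x)+\widetilde{\E}\left[D_\mu g(\mu)(\widetilde X)\cdot(\widetilde X'-\widetilde X)\right].
\end{align}
Take expectations and apply Fubini to exchange the roles of the copy and the original variable. The measure term becomes $\E[\widehat{\E}[D_\mu g(\widehat X_T,\mu_T)(X_T)]\cdot(X'_T-X_T)]$. Together with the $\partial_x g$ term this yields
\begin{align}
\E[g(X_T,\mu_T)-g(X'_T,\mu'_T)]\le \E[Y_T\cdot(X_T-X'_T)].
\end{align}

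\textbf{Running term.} Apply Itô's formula to $Y_t\cdot(X_t-X'_t)$ on $[0,T]$; it vanishes at $t=0$ since both states start from $X_0$. The $dW$ integrals are true martingales: the linear growth assumptions in Assumption \ref{ass:reg} give square integrability of $(Y,Z)$ from mean-field BSDE well-posedness, and the states have finite second moments, so the local martingales have zero expectation (a localization and dominated convergence step). The result is
\begin{align}
\E[Y_T\cdot(X_T-X'_T)]=\E\int_0^T\Big[-\big(\partial_xH_t+\widehat{\E}[D_\mu \widehat H_t(X_t)]\big)\cdot(X_t-X'_t)+Y_t\cdot(b_t-b'_t)+\Tr\big(Z_t^\top(\sigma_t-\sigma'_t)\big)\Big]dt.
\end{align}
Writing $f=H-b^\top Y-\Tr(\sigma^\top Z)$ with the same $(Y_t,Z_t)$ in both Hamiltonians, the $b$ and $\sigma$ terms cancel. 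Combining with the terminal bound gives
\begin{align}
J(u)-J(v)\le \E\int_0^T\Big[H(t,X_t,\mu_t,Y_t,Z_t,u_t)-H(t,X'_t,\mu'_t,Y_t,Z_t,v_t)-\big(\partial_xH_t+\widehat{\E}[D_\mu\widehat H_t(X_t)]\big)\cdot(X_t-X'_t)\Big]dt.
\end{align}

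\textbf{Closing the argument.} Apply L-convexity of $H$ in $(x,\mu,a)$, again with Fubini to convert the Lions derivative term. This bounds the integrand by $-\partial_uH_t\cdot(v_t-u_t)$. Condition 3 says $u_t$ minimizes the convex function $a\mapsto H(t,X_t,\mu_t,Y_t,Z_t,a)$ over the convex set $U$. Hence $\partial_uH_t\cdot(a-u_t)\ge0$ for all $a\in U$, in particular $a=v_t$, and so $J(u)\le J(v)$.

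\textbf{Main obstacle.} The delicate step is handling the Lions derivative terms when using L-convexity. The convexity inequality must be applied jointly on a product space, with the copy $(\widehat X,\widehat X')$ coupled so that it has the joint law of $(X,X')$. Then Fubini must be used to move the copy expectation onto the form appearing in the BSDE driver. We would apply this carefully, pairing $(X_t,X'_t)$ with an independent copy of that pair. Integrability for the martingale step is routine under Assumptions \ref{ass:ass_mfc}--\ref{ass:reg}.
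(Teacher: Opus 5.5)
Your argument is correct and is the standard proof of Theorem 4.25 in \cite{Carmona0}; the paper gives no proof of its own and simply cites that result. Two points you treat implicitly are also implicit in the paper. First, $U$ must be convex for $\partial_uH_t\cdot(v_t-u_t)\ge 0$ to follow from the minimization condition. Second, the independent copy used in the Fubini swap must be a copy of the full tuple $(X_t,X'_t,Y_t,Z_t,u_t)$, not only of $(X_t,X'_t)$, so that the swapped term matches the driver $\widehat{\E}[D_\mu H(t,\widehat X_t,\mu_t,\widehat Y_t,\widehat Z_t,\widehat u_t)(X_t)]$.
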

\begin{remark}
    A few remarks are in order.
\begin{itemize}
    \item If $b$, $\sigma$, $f$, and $g$ are independent of the law of $X_t$, then all Lions derivative terms vanish and the classical stochastic maximum principle is recovered.
    
    \item The convexity assumptions required by Theorem \ref{them:sufficient_smp} may be restrictive. Nevertheless, the first-order identity \eqref{eq:gradient_mfc} remains useful, as it provides the directional derivative of the objective functional and naturally motivates gradient-based numerical methods.
    
    \item For the purpose of numerical implementation, we restrict attention to controls of the form
\begin{align}
    u_t=\phi(t,X_t,\mu_t),
\end{align}
for some deterministic function $\phi:[0,T]\times\bR^d\times\mcal{P}_2(\bR^d)\rightarrow U$. In the numerical implementation, we consider $\phi$ along the flow of probability measures $\mu_t=\mcal{L}_{X_t}$ induced by the state process and define
\begin{align}
    \bar{\phi}(t,x):=\phi(t,x,\mcal{L}_{X_t}),
\end{align}
so that $u_t=\bar{\phi}(t,X_t)$. The resulting time-state function $\bar{\phi}:[0,T]\times\bR^d\rightarrow U$ is then approximated numerically.
    \end{itemize}

\end{remark}

\subsection{Design of projection algorithm for MFC}
To simplify notation, we write $X_n:=X^N_{t_n}$, $\mu_n:=\mcal{L}_{X_n}$, $u_n:=u_{t_n}$, and $\Delta W_n:=W_{t_{n+1}}-W_{t_n}$. We also use the shorthand
\begin{align}
    b_n:=b(t_n,X_n,\mu_n,u_n),\qquad \sigma_n:=\sigma(t_n,X_n,u_n),\qquad f_n:=f(t_n,X_n,\mu_n,u_n),
\end{align}
and use analogous notation for their derivatives and independent copies. In particular, for an independent copy $(\widehat X_n,\widehat u_n)$ of $(X_n,u_n)$, we write
\begin{align}
    \widehat b_n:=b(t_n,\widehat X_n,\mu_n,\widehat u_n),\qquad \widehat f_n:=f(t_n,\widehat X_n,\mu_n,\widehat u_n).
\end{align}

The first step in the design of the numerical method is to introduce a time-discrete approximation of the mean-field BSDE \eqref{mf_fbsde}. For this part, we assume the diffusion coefficient $\sigma(t,x,u)$ is independent of the distribution, thus we consider the following backward Euler scheme:
\begin{align}
    \begin{cases}
        Y^N_n=\E_{t_n}\left[Y^N_{n+1}+\partial_xH(t_n,X_n,\mu_n,Y^N_{n+1},Z^N_n,u_n)\Delta t\right]+\widehat{\E}\left[D_\mu\widehat b_n(X_n)^\top\widehat Y^N_{n+1}+D_\mu\widehat f_n(X_n)\right]\Delta t,\\
        Z^N_n=\dfrac{1}{\Delta t}\E_{t_n}\left[Y^N_{n+1}(\Delta W_n)^\top\right],\qquad n=0,\ldots,N-1,
    \end{cases}\label{eq:dmfbsde0}
\end{align}
with terminal condition $ Y^N_N=\partial_xg(X_N,\mu_N)+\widehat{\E}\left[D_\mu g(\widehat X_N,\mu_N)(X_N)\right]$. Here $\E_{t_n}[\cdot]:=\E[\cdot\mid\mcal{F}_{t_n}]$, while $\widehat{\E}$ denotes expectation with respect to the independent copy.

Under the Markovian feedback structure introduced above, the conditional expectations appearing in \eqref{eq:dmfbsde0} admit deterministic representations in terms of the current state and its distribution. However, a direct numerical approximation involving the measure argument is difficult to implement. We therefore introduce the following sample-based representation of the discrete adjoint processes:
\begin{align}
    \begin{cases}
        \bar Y_n=\bar Y_{n+1}+\partial_xH(t_n,X_n,\mu_n,\bar Y_{n+1},\bar Z_n,u_n)\Delta t+\widehat{\E}\left[D_\mu\widehat b_n(X_n)^\top\widehat{\bar Y}_{n+1}+D_\mu\widehat f_n(X_n)\right]\Delta t,\\
        \bar Z_n=\dfrac{\bar Y_{n+1}(\Delta W_n)^\top}{\Delta t},
    \end{cases}\label{d_mfbsde1}
\end{align}
again with $ \bar Y_N=\partial_xg(X_N,\mu_N)+\widehat{\E}\left[D_\mu g(\widehat X_N,\mu_N)(X_N)\right]$.

The following result shows that the sample-based variables recover the solution of the discrete mean-field BSDE after conditioning.
\begin{proposition}\label{prop:unbiased_mfbsde}
Let $(Y^N_n,Z^N_n)_{n=0}^{N}$ be defined by \eqref{eq:dmfbsde0} and $(\bar Y_n,\bar Z_n)_{n=0}^{N}$ by \eqref{d_mfbsde1}. Then, for every $n=0,\ldots,N-1$,
\begin{align}
    \E_{t_n}[\bar Y_n]=Y^N_n,\qquad \E_{t_n}[\bar Z_n]=Z^N_n.
\end{align}
Consequently,
\begin{align}
    \E[\bar Y_n]=\E[Y^N_n],\qquad \E[\bar Z_n]=\E[Z^N_n].
\end{align}
\end{proposition}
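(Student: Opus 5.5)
Proof is by backward induction on $n$, from $n=N$ down to $n=0$. The inductive claim is $\E_{t_{n}}[\bar Y_{n}]=Y^N_{n}$, together with $\E_{t_n}[\bar Z_n]=Z^N_n$ for $n\le N-1$.

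\emph{Base case.} At $n=N$ the two terminal conditions are the same $\mcal{F}_{t_N}$-measurable random variable, so $\bar Y_N=Y^N_N$.

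\emph{Inductive step for $Z$.} Assume $\E_{t_{n+1}}[\bar Y_{n+1}]=Y^N_{n+1}$. Since $\Delta W_n$ is $\mcal{F}_{t_{n+1}}$-measurable, the tower property gives
\begin{align}
\E_{t_n}[\bar Y_{n+1}\Delta W_n^\top]=\E_{t_n}\big[\E_{t_{n+1}}[\bar Y_{n+1}]\Delta W_n^\top\big]=\E_{t_n}[Y^N_{n+1}\Delta W_n^\top].
\end{align}
Dividing by $\Delta t$ yields $\E_{t_n}[\bar Z_n]=Z^N_n$.

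\emph{Inductive step for $Y$.} Take $\E_{t_n}$ of the first line of \eqref{d_mfbsde1}. The key observation is that $\partial_xH$ is affine in $(y,z)$, with coefficients that are $\mcal{F}_{t_n}$-measurable:
\begin{align}
\partial_xH(t_n,X_n,\mu_n,y,z,u_n)=\partial_x f_n+(\partial_x b_n)^\top y+\textstyle\sum_j(\partial_x\sigma^{j}_n)^\top z^{j},
\end{align}
where $\partial_x b_n,\partial_x\sigma_n,\partial_x f_n$ depend only on $(X_n,u_n)$ and the deterministic $\mu_n$. Conditioning therefore passes through the coefficients, and the term becomes $\partial_xH(t_n,X_n,\mu_n,\E_{t_n}[\bar Y_{n+1}],\E_{t_n}[\bar Z_n],u_n)$. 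By the tower property and the $Z$-step, this equals the conditional expectation of the corresponding term in \eqref{eq:dmfbsde0}. For $\bar Y_{n+1}$ itself, the tower property gives $\E_{t_n}[\bar Y_{n+1}]=\E_{t_n}[Y^N_{n+1}]$.

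\emph{Mean-field term: the main obstacle.} The remaining term is $\widehat{\E}[D_\mu\widehat b_n(X_n)^\top\widehat{\bar Y}_{n+1}+D_\mu\widehat f_n(X_n)]$. It is a deterministic function of $X_n$, averaged over an independent copy, so it is already $\mcal{F}_{t_n}$-measurable. We need
\begin{align}
\widehat\E\big[D_\mu\widehat b_n(x)^\top\widehat{\bar Y}_{n+1}\big]=\widehat\E\big[D_\mu\widehat b_n(x)^\top\widehat Y^N_{n+1}\big]\qquad\text{for each fixed }x.
\end{align}
This holds because $D_\mu\widehat b_n(x)$ is $\widehat{\mcal{F}}_{t_n}$-measurable, so we may condition on $\widehat{\mcal F}_{t_{n+1}}$ in the copy space and apply the induction hypothesis to the copy. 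To make this rigorous I would realize the copy on a product space $\Omega\times\widehat\Omega$ and use Fubini to move $\E_{t_n}$ inside $\widehat\E$. Integrability is needed to justify the swaps; it follows from the bounded derivatives and linear growth in Assumption \ref{ass:reg} together with square integrability of $X_n$ and $\Delta W_n$.

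\emph{Conclusion.} Adding the three pieces gives $\E_{t_n}[\bar Y_n]=Y^N_n$, which closes the induction. Taking full expectations then yields the unconditional identities $\E[\bar Y_n]=\E[Y^N_n]$ and $\E[\bar Z_n]=\E[Z^N_n]$.
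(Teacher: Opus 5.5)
Your proposal is correct and follows the paper's proof essentially step for step. Both arguments are a backward induction from $\bar Y_N=Y^N_N$: the tower property through $\mathcal{F}_{t_{n+1}}$ for $\bar Z_n$, the affine dependence of $\partial_xH$ on $(y,z)$ with $\mathcal{F}_{t_n}$-measurable coefficients for the local part of $\bar Y_n$, and conditioning on $\widehat{\mathcal{F}}_{t_{n+1}}$ in the copy space for the Lions-derivative term.

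Your Fubini/integrability remark goes beyond the paper, which does not address integrability. Note, however, that the justification rests on the independence of the increments $\Delta W_k$, $k\ge n$, from $\mathcal{F}_{t_n}$, because $\bar Y_{n+1}$ already contains products of later increments. Separate square integrability of $X_n$ and $\Delta W_n$ is not enough on its own.
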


\begin{proof}
We prove the result by backward induction. Since $\bar Y_N=Y^N_N$, the assertion holds at the terminal time.

Suppose that, for some $n \leq N-1$,
\begin{align}
    \E_{t_{n+1}}[\bar Y_{n+1}]=Y^N_{n+1}.
\end{align}
From the definition of $\bar Z_n$ and the tower property, we obtain
\begin{align}
    \E_{t_n}[\bar Z_n]
    &=\frac{1}{\Delta t}\E_{t_n}\left[\bar Y_{n+1}(\Delta W_n)^\top\right]\\
    &=\frac{1}{\Delta t}\E_{t_n}\left[\E_{t_{n+1}}[\bar Y_{n+1}](\Delta W_n)^\top\right]\\
    &=\frac{1}{\Delta t}\E_{t_n}\left[Y^N_{n+1}(\Delta W_n)^\top\right]
    =Z^N_n.
\end{align}

We next consider the $\bar Y_n$ equation. Since the Hamiltonian is affine in the adjoint variables $(y,z)$, we have
\begin{align}
    \partial_xH(t_n,X_n,\mu_n,y,z,u_n)=\partial_xf_n+\partial_xb_n^\top y+\partial_x\text{tr}(\sigma_n^\top z).
\end{align}
The coefficients $\partial_xb_n$, $\partial_xf_n$, and $\partial_x\sigma_n$ are $\mcal{F}_{t_n}$-measurable. Therefore, using the induction hypothesis and the identity $\E_{t_n}[\bar Z_n]=Z^N_n$,
\begin{align}
    \E_{t_n}\left[\partial_xH(t_n,X_n,\mu_n,\bar Y_{n+1},\bar Z_n,u_n)\right] &= \E_{t_n}\left[ \partial_xf_n+\partial_xb_n^\top \E_{t_n}[\bar Y_{n+1}] +\partial_x\text{tr}(\sigma_n^\top \E_{t_n}[\bar Z_{n}] )\right] \\ 
    &=\E_{t_n}\left[\partial_xH(t_n,X_n,\mu_n,Y^N_{n+1},Z^N_n,u_n)\right].
\end{align}

For the mean-field term, $D_\mu\widehat b_n(X_n)$ and $D_\mu\widehat f_n(X_n)$ are measurable with respect to $\mcal{F}_{t_n}\vee\widehat{\mcal{F}}_{t_n}$. Applying the tower property on the independent-copy probability space gives
\begin{align}
    \widehat{\E}\left[D_\mu\widehat b_n(X_n)^\top\widehat{\bar Y}_{n+1}\right]
    &=\widehat{\E}\left[D_\mu\widehat b_n(X_n)^\top\widehat{\E}_{t_{n+1}}[\widehat{\bar Y}_{n+1}]\right]\\
    &=\widehat{\E}\left[D_\mu\widehat b_n(X_n)^\top\widehat Y^N_{n+1}\right].
\end{align}
Hence, taking $\E_{t_n}$ on both sides of the recursion for $\bar Y_n$ yields
\begin{align}
    \E_{t_n}[\bar Y_n]
    &=\E_{t_n}\left[Y^N_{n+1}+\partial_xH(t_n,X_n,\mu_n,Y^N_{n+1},Z^N_n,u_n)\Delta t\right]\\
    &\quad+\widehat{\E}\left[D_\mu\widehat b_n(X_n)^\top\widehat Y^N_{n+1}+D_\mu\widehat f_n(X_n)\right]\Delta t=Y^N_n.
\end{align}
The result follows by backward induction. Taking expectations once more gives the unconditional identities.
\end{proof}
Meanwhile, the expectation with respect to the independent copy in \eqref{d_mfbsde1} still needs to be approximated numerically. We therefore replace both the law of $X_n$ and the expectation $\widehat{\E}$ by their empirical counterparts. This leads to the following interacting particle system: for each $i\in\{1,\ldots,M\}$,
\begin{align}
\begin{cases}
    X^i_{n+1}=X^i_n+b(t_n,X^i_n,\hat{\mu}_{X_n},u^i_n)\Delta t+\sigma(t_n,X^i_n,u^i_n)\Delta W^i_n,\\
    Y^i_n=Y^i_{n+1}+\partial_xH(t_n,X^i_n,\hat{\mu}_{X_n},Y^i_{n+1},Z^i_n,u^i_n)\Delta t\\
    \qquad\quad+\dfrac{1}{M}\displaystyle\sum_{j=1}^M\left[D_\mu b(t_n,X^j_n,\hat{\mu}_{X_n},u^j_n)(X^i_n)^\top Y^j_{n+1}+D_\mu f(t_n,X^j_n,\hat{\mu}_{X_n},u^j_n)(X^i_n)\right]\Delta t,\\
    Z^i_n=\dfrac{Y^i_{n+1}(\Delta W^i_n)^\top}{\Delta t},
\end{cases}
\label{eq:MFC_sys}
\end{align}
where $\hat{\mu}_{X_n}:=\frac{1}{M}\sum_{l=1}^M\delta_{X^l_n}$. And the terminal condition is approximated accordingly by $Y^i_N=\partial_xg(X^i_N,\hat{\mu}_{X_N})+\frac{1}{M}\sum_{j=1}^M D_\mu g(X^j_N,\hat{\mu}_{X_N})(X^i_N)$, for $i=1,\ldots,M$. 

For feedback controls, we take $u^i_n=\phi(t_n,X^i_n,\hat{\mu}_{X_n})$, or equivalently $u^i_n=\bar{\phi}(t_n,X^i_n)$ along the empirical measure flow.

Once the particle approximations $(X^i_n,Y^i_n,Z^i_n)_{i=1}^M$ have been obtained, the Hamiltonian gradient can be evaluated particlewise as $\partial_uH(t_n,X^i_n,\hat{\mu}_{X_n},Y^i_{n+1},Z^i_n,u^i_n), i=1,\ldots,M.$
These particlewise gradients provide a sample-based approximation of the gradient of the objective functional and are subsequently used to update the control.Based on the preceding discussion, we construct the numerical algorithm in the following three steps.

\begin{enumerate}
\item[\textbf{Step $\mathbf{i}$)}]
In general, a mean-field feedback control takes the form $u(t,x,\mu)$. For the numerical implementation, we restrict the control along the deterministic law flow $\mu_t^u:=\mcal{L}_{X_t^u}$ induced by the fixed initial distribution and the control itself, and define $\bar u(t,x):=u(t,x,\mu_t^u)$.
Abusing notation, we still write $u(t,x)$ for the restricted feedback. Assume that the restricted optimal feedback satisfies $u^*\in\mcal{C}_b^{1,1}([0,T]\times\bR^d;\bR^{d_1})$. Let $0=t_0<t_1<\cdots<t_N=T$ with $\Delta t=T/N$, and approximate the control piecewise constantly in time, i.e., $u_t(\cdot)=u_{t_n}(\cdot)$ for $t\in[t_n,t_{n+1})$. At iteration $k$, the ideal gradient update at $t_n$ is
\begin{align}
    V_{t_n}^{N,k}(X_{t_n}^{N,k}):=u_{t_n}^k(X_{t_n}^{N,k})-\rho_k\partial_uH\left(t_n,X_{t_n}^{N,k},\mcal{L}_{X_{t_n}^{N,k}},Y_{t_{n+1}}^{N,k},Z_{t_n}^{N,k},u_{t_n}^k(X_{t_n}^{N,k})\right). \label{step1}
\end{align}
And for a given feedback control $u^k$, the triple $(X_{t_n}^{N,k},Y_{t_n}^{N,k},Z_{t_n}^{N,k})$ is given by the discrete mean-field forward--backward system
\begin{align}
\begin{cases}
    X^{N,k}_{t_{n+1}}=X^{N,k}_{t_n}+b\left(t_n,X^{N,k}_{t_n},\mcal{L}_{X_{t_n}^{N,k}},u^k_{t_n}(X^{N,k}_{t_n})\right)\Delta t+\sigma\left(t_n,X^{N,k}_{t_n},u^k_{t_n}(X^{N,k}_{t_n})\right)\Delta W_{t_n},\\
    Y^{N,k}_{t_n}=\E_{t_n}\left[Y^{N,k}_{t_{n+1}}+\partial_xH\left(t_n,X^{N,k}_{t_n},\mcal{L}_{X_{t_n}^{N,k}},Y^{N,k}_{t_{n+1}},Z^{N,k}_{t_n},u^k_{t_n}(X^{N,k}_{t_n})\right)\Delta t\right]\\
    \qquad\quad+\widehat{\E}\left[D_\mu H\left(t_n,\widehat X^{N,k}_{t_n},\mcal{L}_{X_{t_n}^{N,k}},\widehat Y^{N,k}_{t_{n+1}},\widehat Z^{N,k}_{t_n},u^k_{t_n}(\widehat X^{N,k}_{t_n})\right)(X^{N,k}_{t_n})\right]\Delta t,\\
    Z^{N,k}_{t_n}=\dfrac{1}{\Delta t}\E_{t_n}\left[Y^{N,k}_{t_{n+1}}\Delta W_{t_n}^\top\right].
\end{cases}
\label{classical_z}
\end{align}

\item[\textbf{Step $\mathbf{ii}$)}]
Since $V_{t_n}^{N,k}$ is only evaluate at sample states, a functional approximation is needed to obtain a feedback control that can be evaluated at arbitrary $x \in \bR^d$. Let $\mcal{M}$ denote an approximation space and define the projection
\begin{align}
    \Pi_\pi^\mcal{M}f\in\arg\min_{\Phi\in\mcal{M}}\|f-\Phi\|_\pi^2,\qquad \|f\|_\pi^2:=\int_{\bR^d}|f(x)|^2\,d\pi(x). \label{projection_operator_general}
\end{align}
Here $\pi_n^k:=\mcal{L}_{X_{t_n}^{N,k}}$. We consider the following two approximation spaces.

\begin{enumerate}
\item For a randomized neural network, let
\begin{align}
    \mcal{M}_L:=\left\{\Phi_r(x)=\sum_{l=1}^Lr_l\phi_l(x):r_l\in\bR^{d_1}\right\},
\end{align}
where $\{\phi_l\}_{l=1}^L$ are fixed randomized basis functions. The corresponding projection is
\begin{align}
    \Pi_{\pi_n^k}^Lf\in\arg\min_{\Phi_r\in\mcal{M}_L}\|f-\Phi_r\|_{\pi_n^k}^2.
\end{align}

\item Alternatively, let $\mcal{M}_{K,(m_1,\ldots,m_K)}^{\rm DNN}$ denote the class of feedforward neural networks
\begin{align}
    \Phi_\theta(x)=W_{K+1}h_K(x)+b_{K+1},\qquad h_j(x)=\rho(W_jh_{j-1}(x)+b_j),\quad j=1,\ldots,K,\qquad h_0(x)=x.
\end{align}
The corresponding nonlinear best approximation is
\begin{align}
    \Pi_{\pi_n^k}^{K,(m_1,\ldots,m_K)}f:=\Phi_{\theta^*},\qquad \theta^*\in\arg\min_\theta\|f-\Phi_\theta\|_{\pi_n^k}^2.
\end{align}
\end{enumerate}

Thus, at the population level, the control update is written generically as
\begin{align}
    u_{t_n}^{k+1}=\Pi_{\pi_n^k}^{\mcal{M}}V_{t_n}^{N,k}. \label{projection_first_stage}
\end{align}

\item[\textbf{Step $\mathbf{iii}$)}]
The distribution $\pi_n^k$ and the conditional expectations in \eqref{classical_z} are generally not available analytically. We therefore introduce an ensemble $\{X_n^{i,k}\}_{i=1}^M$ and the empirical measure $ \widehat\pi_n^k:=\frac{1}{M}\sum_{i=1}^M\delta_{X_n^{i,k}}$.
The population projection in \eqref{projection_first_stage} is then replaced by the empirical least-squares problem 
\begin{align}
    \Pi_{\widehat\pi_n^k}^{M,\mcal{M}}f\in\arg\min_{\Phi\in\mcal{M}}\frac{1}{M}\sum_{i=1}^M|f(X_n^{i,k})-\Phi(X_n^{i,k})|^2.
\end{align}

In our case, the discrete processes $(X_{t_n}^{N,k},Y_{t_n}^{N,k},Z_{t_n}^{N,k})$ are similarly replaced by their particle approximations $(X_n^{i,k},Y_n^{i,k},Z_n^{i,k})$ obtained from \eqref{eq:MFC_sys}. For each particle, set
\begin{align}
    u_n^{i,k}:=u_{t_n}^k(X_n^{i,k}),\qquad \widetilde V_{t_n}^{i,k}:=u_n^{i,k}-\rho_k\partial_uH\left(t_n,X_n^{i,k},\widehat\pi_n^k,Y_{n+1}^{i,k},Z_n^{i,k},u_n^{i,k}\right).
\end{align}
The implementable control update is therefore given by: 
\begin{align}
    u_{t_n}^{k+1}\in\arg\min_{\Phi\in\mcal{M}}\frac{1}{M}\sum_{i=1}^M|\widetilde V_{t_n}^{i,k}-\Phi(X_n^{i,k})|^2. \label{final_control_update}
\end{align}
\end{enumerate}
We summarize the result in Algorithm \ref{algorithm_main2}, and highlight the main idea of the algorithm as follows: 
\begin{itemize}
    \item There is no predefined spatial mesh grids which systematically mitigates the curse of dimensionality issue. In the meantime, solution of the adjoint process (mean field FBSDEs \eqref{mf_fbsde}) is not solved explicitly, and they are only approximated via the samplewise particle approach with the distributions approximated by the relevant particle ensembles at discrete times. 
    \item At each iteration $k+1$, the control at time $t_n$ is obtained based on updating the control function at the $k$-th iteration over the locations $\lbrace X^{k,i}_n \rbrace$ via: $\tilde{u}_n^{k+1, i} := u_n^{k}(X^{k, i}_n) -\eta_k j_n'(u)(X_{n}^{k,i})$ where $j_n'(u)(X_{n}^{k,i})$ is as defined in Step 2 in Algorithm \ref{algorithm_main2}. Since $u^{k+1}_n$ is a function, we then fit a function over those points $\lbrace (X^{k, i}_n,\tilde{u}_n^{k+1, i}) \rbrace^M_{i=1}$ and name it the control $u^{k+1}_n$. Importantly, the sample-wise gradients $j_n'(u)(X_{n}^{k,i})$ are approximately unbiased estimates of the Hamiltonian gradient, so the function approximation task is essentially a regression. 
\end{itemize}
\begin{algorithm}
\footnotesize
\setlength{\abovedisplayskip}{3pt}
\setlength{\belowdisplayskip}{3pt}
\setlength{\abovedisplayshortskip}{2pt}
\setlength{\belowdisplayshortskip}{2pt}
\caption{Main Algorithm (MFC)}\label{algorithm_main2}
\begin{algorithmic}[1]
\REQUIRE Batch size $M$, time steps $N$, terminal time $T$, iterations $K$, learning rates $\{\eta_k\}_{k=0}^{K-1}$, $\Delta t=T/N$; initialize $u_n^0=0$, $n=0,\ldots,N-1$.

\FOR{$k=0,\ldots,K-1$}

\STATE \textbf{Forward simulation:} for $n=0,\ldots,N-1$, define
$\mu_n^k:=\frac{1}{M}\sum_{j=1}^M\delta_{X_n^{k,j}}$ and
$u_n^{k,i}:=u_n^k(X_n^{k,i})$, and simulate, for $i=1,\ldots,M$,
\[
X_{n+1}^{k,i}=X_n^{k,i}+b(t_n,X_n^{k,i},\mu_n^k,u_n^{k,i})\Delta t
+\sigma(t_n,X_n^{k,i},u_n^{k,i})\Delta W_n^{k,i}.
\]

\STATE \textbf{Terminal condition:} with $\mu_N^k:=\frac{1}{M}\sum_{j=1}^M\delta_{X_N^{k,j}}$, set
\[
Y_N^{k,i}=\partial_xg(X_N^{k,i},\mu_N^k)
+\frac{1}{M}\sum_{j=1}^M D_\mu g(X_N^{k,j},\mu_N^k)(X_N^{k,i}),\qquad i=1,\ldots,M.
\]

\STATE \textbf{Backward adjoint simulation:} for $n=N-1,\ldots,0$ and $i=1,\ldots,M$, compute
\begin{align}
    Z_n^{k,i}&=\frac{Y_{n+1}^{k,i}(\Delta W_n^{k,i})^\top}{\Delta t},\\
    Y_n^{k,i}&=Y_{n+1}^{k,i}
    +\partial_xH(t_n,X_n^{k,i},\mu_n^k,Y_{n+1}^{k,i},Z_n^{k,i},u_n^{k,i})\Delta t
+\frac{\Delta t}{M}\sum_{j=1}^M
D_\mu H(t_n,X_n^{k,j},\mu_n^k,Y_{n+1}^{k,j},Z_n^{k,j},u_n^{k,j})(X_n^{k,i}),
\end{align}
and evaluate
\[
j_n'(u)(X_n^{k,i})
:=\partial_uH(t_n,X_n^{k,i},\mu_n^k,Y_{n+1}^{k,i},Z_n^{k,i},u_n^{k,i}).
\]

\STATE \textbf{Control update:} for $n=0,\ldots,N-1$, form the targets
\[
\widetilde u_n^{k+1,i}
=u_n^k(X_n^{k,i})-\eta_k j_n'(u)(X_n^{k,i}),\qquad i=1,\ldots,M,
\]
and fit $u_n^{k+1}\in\mcal{M}$ to
$\{(X_n^{k,i},\widetilde u_n^{k+1,i})\}_{i=1}^M$
using OLS or ridge regression.

\ENDFOR

\RETURN $\{u_n^K\}_{n=0}^{N-1}$.
\end{algorithmic}
\end{algorithm}

\subsection{A special case: Connections between Adjoint Matching and the projection algorithm}
In the mean field control setting for generative modeling, the motivation is to transport via SDE the starting distribution via an SDE such that the distribution of the terminal state of the SDE statistically matches the target distribution. In our formulation, we impose a soft constraint on the distributions of terminal /intermediate state matching.

More specifically,  we formulate the problem as follows: given target measure $ \left(\rho_t \right)_{0 \leq t \leq T} \in \mcal{P}_2(\bR^d)$, minimize the following loss 
\begin{equation}
    J(u; \lambda_f, \lambda_g) = \E \left[\int^T_0 \left( \frac{1}{2}|u_t|^2 + \lambda_f f(\mcal{L}_{X_t}, \rho_t) \right) dt + \lambda_g D(\mcal{L}_{X_T}, \rho_T) \right]  \label{eq:simplifiedLoss}
\end{equation}
subject to the SDE constraint:
\begin{align}
    dX_t=\left( b(t,X_t) + \sigma_t u_t \right)dt + \sigma_t dW_t, \qquad X_0 \sim p_0, 
\end{align}
where $u$ is an admissible control. 
In the formulation above 
\begin{itemize}
    \item $\lambda_g$ is the penalty factor on the terminal state matching. 
    \item $\lambda_f$ is the penalty factor on the intermediate state matching. When it is equal to zero, we obtain a relaxed constraint on target distribution.  Such formulation is the soft-constraint of the Schr{\"o}dinger bridge (SB) formulation \cite{Jiang1}. That is, instead of considering the original SB formulation, the hard terminal constraint is replaced with a soft penalty term $\lambda_g D(\mcal{L}_{X_T}, \rho_T)$ which enters the target loss function.  In this case, it is only required that the generated distribution is statistically similar to $\rho$ at the terminal state. Such formulation could be useful for finetuning purpose since the base $b(t,X_t)$ could be pretrained and then frozen. The new control $u$ needs to be found for minimizing the given loss. 
    \item The terminal loss function $D(\mu, \nu)$, $\mu,\nu \in \mcal{P}_2(\bR^d)$ can take various forms such as the KL divergence or the square of $W_2$ distance. For numerical purposes, different choices can be considered. For instance, 1-D optimal transport problem is easier than its higher dimension counterparts. In higher dimensions, we can take it to be KL divergence and use a differentiable debiased Sinkhorn divergence as a regularized proxy for Wasserstein discrepancy. We will detail the gradient of the terminal loss when we present numerical examples.
\end{itemize}
In in equation \eqref{eq:simplifiedLoss}, if we set $\lambda_f=0$, by using the stochastic maximum principle, we construct the Hamiltonian:
\begin{align}
    H(t, X_t, \mcal{L}_{X_t}, Y_t, Z_t, u_t) = \frac{1}{2}|u_t|^2 + (b(t,X_t)+\sigma_t u_t)^T Y_t + \text{tr}\left(\sigma_t z\right)
\end{align}
where $(Y_t,Z_t)$ are solutions to the corresponding mean field BSDE. We obtain that the critical point takes the form: 
\begin{align}
    u_t= -\sigma^T_t Y_t, \quad \Rightarrow \quad u_{t_n} \approx  -\sigma^T_{t_n} Y^N_{t_n}
\end{align}
where $Y^N_{t_n}$ is as defined in \eqref{eq:dmfbsde0}. And this naturally suggests the following minimizing procedure, at each iteration $k$, freeze the control $u^k$ and minimize over $u^{k+1}$:
\begin{align}
L(u^{k+1};u^k) := \sum^{N-1}_{n=0} \E \left[ |u^{k+1}_{t_n}(t, X^{u^k}_{t_n}) + \sigma^T_t \bar Y^{u^k}_n|^2\right] \label{eq:adjoint_og}
\end{align}
where $(X^{u^k}_{t_n}, \bar Y^{u^k}_n)$ are solutions to \eqref{d_mfbsde1}. We note that this procedure is closely related in spirit to Step 3 of Algorithm \ref{algorithm_main2}. In the setting of \eqref{eq:adjoint_og}, the linear-quadratic structure of the control problem allows the stationarity condition of the Hamiltonian to be characterized explicitly. The control can therefore be recovered recursively through regression, rather than through a separate optimization at each time step.

Importantly, the formulation associated with \eqref{eq:adjoint_og} corresponds to the lean adjoint-matching approach of \cite{DomingoEnrich1}. As one of our theoretical contributions, we will derive in the next section a mean-field analogue of the adjoint-matching formulation in \cite{DomingoEnrich1} and establish its connection with the sample-based mean-field BSDE formulation developed above. In particular, instead of first constructing sample-wise approximations of the time-discretized mean-field BSDE, the lean adjoint-matching approach leads directly to a random pathwise mean-field adjoint ODE. Its time discretization coincides precisely with the sample-wise backward recursion derived from our mean-field BSDE formulation. However, we emphasize that our algorithmic framework is more general in the following respects:
\begin{itemize}
    \item The current lean adjoint-matching formulation is developed for control acting through the drift. In contrast, our construction is based directly on a sample-wise approximation of the mean-field BSDE, in which both $Y^N_{t_n}$ and $Z^N_{t_n}$ are explicitly approximated. Consequently, the framework naturally accommodates control-dependent diffusion coefficients, since the $Z$-component enters the Hamiltonian gradient through the term involving $\partial_u\sigma$.

    \item In the lean adjoint-matching formulation associated with \eqref{eq:adjoint_og}, the loss is constructed by exploiting an explicit form of the critical point of the Hamiltonian. Such a closed-form characterization may not be available for a general control problem. In this case, the gradient information provided by $\partial_uH$ leads naturally to the descent update developed above. This viewpoint is also closely related in spirit to successive-approximation methods, in particular the \emph{Modified Successive Approximation} (MSA) approach.
\end{itemize}
In the next section, recognizing that a complete theoretical analysis of Algorithm \ref{algorithm_main2} is challenging, we focus on the specific class of problems introduced in \eqref{eq:mfpa_Jv_general}. We show that the corresponding continuous-time recursive minimization of the loss \eqref{eq:adjoint_og} yields a self-consistent characterization of the optimal control. This result provides theoretical support for the underlying construction of Algorithm \ref{algorithm_main2}. More importantly, our analysis extends the lean adjoint-matching framework of \cite{DomingoEnrich1} to the mean-field control setting. We further show that the time discretization of the resulting mean-field backward adjoint ODE coincides precisely with the sample-wise backward recursion derived from our mean-field BSDE formulation.

\subsubsection{Adjoint matching: the mean field case}
In this section, we study the MFC problem in the following setup: minimize
\begin{align}
    J(u):=\mathbb E\left[\int_0^T\widetilde f^u(t,X^u_t,\mu^u_t)\,dt+g(X^u_T,\mu^u_T)\right] \label{eq:mfpa_Jv_general}
\end{align}
subject to the SDE
\begin{align}
    dX^u_t &= \widetilde b^u(t,X_t^u,\mu_t^u)\,dt+\sigma_t\,dW_t,\qquad \mu_t^u:=\mcal{L}_{X_t^u}, \label{eq:mfpa_state} \\ 
    \text{where }\widetilde b^u(t,x,\mu) &:= b(t,x,\mu)+\sigma_tu(t,x,\mu), \ \quad  \widetilde f^u(t,x,\mu) := f(t,x,\mu)+\frac{1}{2}|u(t,x,\mu)|^2.
\end{align}
For deep learning purpose, $u(\cdot)$ is approximated by a deep neural network $u^\theta(\cdot)$ where $\theta$ denotes the trainable weights. More specifically, we consider the controlled McKean--Vlasov dynamics
\begin{align}
    dX_t^\theta &= \widetilde b^\theta(t,X_t^\theta,\mu_t^\theta)\,dt+\sigma_t\,dW_t,\qquad \mu_t^\theta:=\mcal{L}_{X_t^\theta}, \label{eq:mfpa_state_theta} \\ 
    \text{where }\widetilde b^\theta(t,x,\mu) &:= b(t,x,\mu)+\sigma_tu^\theta(t,x,\mu). \label{eq:mfpa_tilde_b}
\end{align}
For a fixed smooth vector field $v$, which is regarded as independent of $\theta$ when taking the parameter derivative, define
\begin{align}
    \widetilde f^v(t,x,\mu):=\frac12|v(t,x,\mu)|^2+f(t,x,\mu). \label{eq:mfpa_tilde_f}
\end{align}
and the loss
\begin{align}
    J^v(\theta):=\mathbb E\left[\int_0^T\widetilde f^v(t,X_t^\theta,\mu_t^\theta)\,dt+g(X_T^\theta,\mu_T^\theta)\right]. \label{eq:mfpa_Jv}
\end{align}
In the following proof, we will take
\begin{align}
    v=\operatorname{stopgrad}(u^\theta)
\end{align}
meaning that $v$ will be held fixed when differentiating through $\theta$, and it will eventually be evaluated at $u^\theta$. 
\begin{lemma}[Mean-field pathwise adjoint and gradient formula]
\label{lem:mfpa_pathwise_adjoint}
Assume that $b,f, u^\theta \in C^{1,2}_b([0,T]\times \bR^d \times \mcal{P}_2(\bR^d))$, $u^\theta$ is continuously differentiable in $\theta$ and that $g \in C^{2}_b(\bR^d, \mcal{P}_2(\bR^d))$
Let $v$ be a  fixed smooth vector field , and $X^\theta$ solve \eqref{eq:mfpa_state_theta}. Define the mean-field adjoint
\begin{align}
    a_t=a(t;\mathbb{X}^\theta,u^\theta)
\end{align}
as the unique solution of the backward mean-field ODE
\begin{align}
    \frac{d}{dt}a_t &= -\nabla_x\widetilde b^\theta(t,X_t^\theta,\mu_t^\theta)^Ta_t-\nabla_x\widetilde f^v(t,X_t^\theta,\mu_t^\theta) -\widehat{\mathbb E}\left[D_\mu\widetilde b^\theta(t,\widehat X_t^\theta,\mu_t^\theta)(X_t^\theta)^T\widehat a_t\right]-\widehat{\mathbb E}\left[D_\mu\widetilde f^v(t,\widehat X_t^\theta,\mu_t^\theta)(X_t^\theta)\right], \label{eq:mfpa_adjoint}\\
    a_T &= \nabla_xg(X_T^\theta,\mu_T^\theta) +\widehat{\mathbb E}\left[D_\mu g(\widehat X_T^\theta,\mu_T^\theta)(X_T^\theta)\right]. \label{eq:mfpa_adjoint_terminal}
\end{align}
In the equations above, $(\widehat X^\theta,\widehat a)$ denotes an independent copy of $(X^\theta,a)$ and $\widehat{\mathbb E}$ denotes expectation with respect to the independent copy. The notation $\mathbb{X}^\theta$ denotes that $a_t$ is a path-wise quantity and it is $\mcal{F}_T$-measurable but not adapted to $\mcal{F}_t$.

Then
\begin{align}
    \nabla_\theta J^v(\theta)=\mathbb E\left[\int_0^T\left(\partial_\theta u^\theta(t,X_t^\theta,\mu_t^\theta)\right)^T\sigma_t^Ta_t\,dt\right]. \label{eq:mfpa_Jv_gradient}
\end{align}
Consequently, taking $v=\operatorname{stopgrad}(u^\theta)$ for
\begin{align}
    J(\theta):=\mathbb E\left[\int_0^T\left(\frac12|u^\theta(t,X_t^\theta,\mu_t^\theta)|^2+f(t,X_t^\theta,\mu_t^\theta)\right)\,dt+g(X_T^\theta,\mu_T^\theta)\right], \label{eq:mfpa_Jtheta}
\end{align}
we have
\begin{align}
    \nabla_\theta J(\theta)=\mathbb E\left[\int_0^T\left(\partial_\theta u^\theta(t,X_t^\theta,\mu_t^\theta)\right)^T\left(u^\theta(t,X_t^\theta,\mu_t^\theta)+\sigma_t^Ta_t\right)\,dt\right]. \label{eq:mfpa_gradient_formula}
\end{align}
\end{lemma}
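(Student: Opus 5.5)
The plan is a first-variation argument on the McKean--Vlasov state equation, followed by a duality (integration-by-parts) identity with the pathwise adjoint $a_t$. Fix $\theta$ and a direction, and differentiate $\theta\mapsto X^\theta_t$ in $L^2$. Under the $C^{1,2}_b$ assumptions, the first variation $\delta X_t:=\partial_\theta X^\theta_t$ exists. It solves the linear mean-field (random) ODE
\begin{align}
\frac{d}{dt}\delta X_t=\nabla_x\widetilde b^\theta_t\,\delta X_t+\widehat{\E}\left[D_\mu\widetilde b^\theta(t,X_t^\theta,\mu_t^\theta)(\widehat X_t^\theta)\,\widehat{\delta X}_t\right]+\sigma_t\,\partial_\theta u^\theta(t,X_t^\theta,\mu_t^\theta),\qquad \delta X_0=0.
\end{align}
There is no $dW$ term because $\sigma_t$ is deterministic and independent of $\theta$. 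This is a standard differentiability-of-the-flow result for McKean--Vlasov SDEs with Lipschitz, Lions-differentiable coefficients. I would justify it by writing the difference quotient equation, using the lift $U(X)=u(\mcal{L}_X)$ and the Fréchet derivative representation from the notation section, and closing with Gronwall.

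Next, differentiate $J^v(\theta)$ with $v$ frozen. By the chain rule for $x$ and for the measure argument via the Lions derivative,
\begin{align}
\nabla_\theta J^v=\E\Big[\int_0^T\big(\nabla_x\widetilde f^v_t\,\delta X_t+\widehat{\E}[D_\mu\widetilde f^v(t,X_t,\mu_t)(\widehat X_t)\widehat{\delta X}_t]\big)dt+\nabla_xg\,\delta X_T+\widehat{\E}[D_\mu g(X_T,\mu_T)(\widehat X_T)\widehat{\delta X}_T]\Big].
\end{align}
The measure terms are rewritten by Fubini and exchangeability of $(X,\widehat X)$, i.e.\ by swapping the roles of the original and the copy. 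This turns $\E\widehat{\E}[D_\mu\phi(X)(\widehat X)\widehat{\delta X}]$ into $\E[\widehat{\E}[D_\mu\phi(\widehat X)(X)]\,\delta X]$. Doing this for $g$ gives exactly $\E[a_T\cdot\delta X_T]$ by \eqref{eq:mfpa_adjoint_terminal}.

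Then compute $\frac{d}{dt}\E[a_t\cdot\delta X_t]$. Both $a$ and $\delta X$ are absolutely continuous in $t$ pathwise, so no Itô correction appears even though $a$ is not adapted. This is exactly why the pathwise (ODE) adjoint suffices here. Substituting \eqref{eq:mfpa_adjoint} and the $\delta X$ equation, the $\nabla_x\widetilde b$ terms cancel directly. The $D_\mu\widetilde b$ terms cancel after the same swap of the copy, using $\E\widehat{\E}[a\cdot D_\mu\widetilde b(X)(\widehat X)\widehat{\delta X}]=\E[\widehat{\E}[D_\mu\widetilde b(\widehat X)(X)^T\widehat a]\cdot\delta X]$. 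What remains is
\begin{align}
\frac{d}{dt}\E[a_t\cdot\delta X_t]=\E[a_t^T\sigma_t\partial_\theta u^\theta]-\E[\nabla_x\widetilde f^v\,\delta X_t]-\E\big[\widehat{\E}[D_\mu\widetilde f^v(\widehat X_t)(X_t)]\cdot\delta X_t\big].
\end{align}
Integrating from $0$ to $T$ with $\delta X_0=0$ and comparing with the expression for $\nabla_\theta J^v$ yields \eqref{eq:mfpa_Jv_gradient}.

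For \eqref{eq:mfpa_gradient_formula}, differentiate $J(\theta)$. The only difference from $J^v$ is the explicit $\theta$-dependence of $\frac12|u^\theta|^2$ through $u^\theta$ itself. The total derivative therefore equals $\nabla_\theta J^v|_{v=u^\theta}$ plus $\E\int_0^T(\partial_\theta u^\theta)^Tu^\theta\,dt$, and the two together give the claim.

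Well-posedness of the mean-field backward ODE \eqref{eq:mfpa_adjoint} needs a separate remark. It is linear in $(a,\widehat{\E}[\cdot\,\widehat a])$ with bounded coefficients, so a fixed point in $L^2(\Omega;C([0,T]))$ via Gronwall works.

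I expect the main obstacle to be the rigorous differentiability of $\theta\mapsto(X^\theta,\mu^\theta)$ together with the measure-derivative chain rule. The careful bookkeeping of which variable the Lions derivative is evaluated at during the copy swap is the step most prone to error.
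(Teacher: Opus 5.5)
Your proposal is correct and follows essentially the same route as the paper's proof. Both use the linear variational equation for $\partial_\theta X^\theta_t$, the Lions-derivative chain rule with the Fubini/copy-swap to identify the terminal term with $a_T$, and the product rule for $\E[a_t^\top\partial_\theta X^\theta_t]$ with cancellation via the adjoint equation; both then add $\E\int_0^T(\partial_\theta u^\theta)^\top u^\theta\,dt$ to pass from $J^v$ to $J$. The differences are cosmetic: you prove the $J^v$ identity before the $J$ consequence, and you add justifications of flow differentiability and of well-posedness of the backward mean-field ODE, which the paper takes for granted.
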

\begin{proof}
Let's first acknowledge that 
\begin{align}
    \left.\nabla_\theta J^v(\theta)\right|_{v=\operatorname{stopgrad}(u^\theta)}=\mathbb E\left[\int_0^T\left(\partial_\theta u^\theta(t,X_t,\mu_t)\right)^T\sigma_t^Ta_t\,dt\right] \label{eq:mfpa_fundamental}
\end{align}
and will prove this identity later. 
Differentiating $J(\theta)$ with respect to $\theta$, we have
   \begin{align}
    \nabla_\theta J(\theta) &= \mathbb E\left[\int_0^T\left(\partial_\theta u^\theta(t,X_t,\mu_t)\right)^Tu^\theta(t,X_t,\mu_t)\,dt\right]+\left.\nabla_\theta J^v(\theta)\right|_{v=\operatorname{stopgrad}(u^\theta)} \\
    &= \mathbb E\left[\int_0^T\left(\partial_\theta u^\theta(t,X_t,\mu_t)\right)^Tu^\theta(t,X_t,\mu_t)\,dt\right]+\mathbb E\left[\int_0^T\left(\partial_\theta u^\theta(t,X_t,\mu_t)\right)^T\sigma_t^Ta_t\,dt\right] \\
    &= \mathbb E\left[\int_0^T\left(\partial_\theta u^\theta(t,X_t,\mu_t)\right)^T\left(u^\theta(t,X_t,\mu_t)+\sigma_t^Ta_t\right)dt\right]. \label{eq:Jtheta_identity}
\end{align} 
For notational simplicity, write
\begin{align}
    X_t:=X_t^\theta,\qquad \mu_t:=\mcal{L}_{X_t^\theta}.
\end{align}
We now show \eqref{eq:mfpa_fundamental}. Differentiating \eqref{eq:mfpa_state_theta} with respect to $\theta$, we have
\begin{align}
    \frac{d}{dt}\partial_\theta X_t^\theta &= \nabla_x\widetilde b^\theta(t,X_t,\mu_t)\partial_\theta X_t^\theta+\partial_\theta\widetilde b^\theta(t,X_t,\mu_t) +\widehat{\mathbb E}\left[D_\mu\widetilde b^\theta(t,X_t,\mu_t)(\widehat X_t)\widehat{\partial_\theta X_t^\theta}\right],\qquad \partial_\theta X_0^\theta=0. \label{eq:mfpa_variational}
\end{align}
Note that by definition of $\widetilde b^\theta$, we have $ \partial_\theta\widetilde b^\theta(t,x,\mu)=\sigma_t\partial_\theta u^\theta(t,x,\mu)$. 
Now, differentiating $J^v(\theta)$ with respect to $\theta$ gives:
\begin{align}
    \nabla_\theta J^v(\theta) &= \mathbb E\left[\int_0^T(\partial_\theta X_t^\theta)^T\nabla_x\widetilde f^v(t,X_t,\mu_t)\,dt+(\partial_\theta X_T^\theta)^T\nabla_xg(X_T,\mu_T)\right] \\
    &\quad+\mathbb E\left[\int_0^T\widehat{\mathbb E}\left[\widehat{\partial_\theta X_t^\theta}^TD_\mu\widetilde f^v(t,X_t,\mu_t)(\widehat X_t)\right]dt\right]+\mathbb E\left[\widehat{\mathbb E}\left[\widehat{\partial_\theta X_T^\theta}^TD_\mu g(X_T,\mu_T)(\widehat X_T)\right]\right]. \label{eq:mfpa_Jv_raw}\\
    &= \mathbb E\left[\int_0^T(\partial_\theta X_t^\theta)^T\left(\nabla_x\widetilde f^v(t,X_t,\mu_t) +\widehat{\mathbb E}\left[D_\mu\widetilde f^v(t,\widehat X_t,\mu_t)(X_t)\right]\right)dt\right] \\
    &\quad+\mathbb E\left[(\partial_\theta X_T^\theta)^T\left(\nabla_xg(X_T,\mu_T) +\widehat{\mathbb E}\left[D_\mu g(\widehat X_T,\mu_T)(X_T)\right]\right)\right] \label{eq:mfpa_Jv_exchange}\\
    &=\mathbb E\left[\int_0^T(\partial_\theta X_t^\theta)^T\left(\nabla_x\widetilde f^v(t,X_t,\mu_t) +\widehat{\mathbb E}\left[D_\mu\widetilde f^v(t,\widehat X_t,\mu_t)(X_t)\right]\right)dt + (\partial_\theta X_T^\theta)^T a_T\right] \label{eq:mfpa_Jv_terminal}
\end{align}
In the sequence of equations above, \eqref{eq:mfpa_Jv_exchange} is due to Fubini's theorem and \eqref{eq:mfpa_Jv_terminal} is due to the definition of $a_T$. 
Now differentiate $a_t \partial_\theta X_t^\theta$ under the expectation, using integration by part, we get: 
\begin{align}
    \frac{d}{dt}\mathbb E[a_t^T\partial_\theta X_t^\theta] &= \mathbb E\left[(\partial_\theta X_t^\theta)^T\frac{d}{dt}a_t+(\partial_\theta X_t^\theta)^T\nabla_x\widetilde b^\theta(t,X_t,\mu_t)^Ta_t+a_t^T\partial_\theta\widetilde b^\theta(t,X_t,\mu_t)\right] \\
    &\quad+\mathbb E\widehat{\mathbb E}\left[a_t^TD_\mu\widetilde b^\theta(t,X_t,\mu_t)(\widehat X_t)\widehat{\partial_\theta X_t^\theta}\right]. \label{eq:mfpa_product_raw} \\
    &= \mathbb E\left[(\partial_\theta X_t^\theta)^T\left(\frac{d}{dt}a_t+\nabla_x\widetilde b^\theta(t,X_t,\mu_t)^Ta_t+\widehat{\mathbb E}\left[D_\mu\widetilde b^\theta(t,\widehat X_t,\mu_t)(X_t)^T\widehat a_t\right]\right)\right] \\
    &\quad+\mathbb E\left[a_t^T\partial_\theta\widetilde b^\theta(t,X_t,\mu_t)\right]. \label{eq:mfpa_product_exchange}
\end{align}
where in \eqref{eq:mfpa_product_exchange} we again used the Fubini's theorem. 
Hence, using \eqref{eq:mfpa_adjoint}, we have 
\begin{align}
    \frac{d}{dt}\mathbb E[a_t^T\partial_\theta X_t^\theta] &= -\mathbb E\left[(\partial_\theta X_t^\theta)^T\left(\nabla_x\widetilde f^v(t,X_t,\mu_t)+\widehat{\mathbb E}\left[D_\mu\widetilde f^v(t,\widehat X_t,\mu_t)(X_t)\right]\right)\right]+\mathbb E\left[a_t^T\partial_\theta\widetilde b^\theta(t,X_t,\mu_t)\right]. \label{eq:mfpa_cancellation}
\end{align}
Integrating over $[0,T]$ and using $\partial_\theta X_0^\theta=0$ gives
\begin{align}
    \mathbb E[(\partial_\theta X_T^\theta)^Ta_T] =-\mathbb E\left[\int_0^T(\partial_\theta X_t^\theta)^T\left(\nabla_x\widetilde f^v(t,X_t,\mu_t)+\widehat{\mathbb E}\left[D_\mu\widetilde f^v(t,\widehat X_t,\mu_t)(X_t)\right]\right)dt\right] + \mathbb E\left[\int_0^T\left(\partial_\theta\widetilde b^\theta(t,X_t,\mu_t)\right)^Ta_t\,dt\right]. \label{eq:mfpa_main_identity}
\end{align}
Substituting this equation in \eqref{eq:mfpa_Jv_terminal}, we have
\begin{align}
    \nabla_\theta J^v(\theta)&=\mathbb E\left[\int_0^T\left( \partial_\theta\widetilde b^\theta(t,X_t,\mu_t)\right)^Ta_t\,dt\right] \\
    &=\mathbb E\left[\int_0^T\left(\partial_\theta u^\theta(t,X_t,\mu_t)\right)^T\sigma_t^Ta_t\,dt\right]
\end{align}
which proves \eqref{eq:mfpa_fundamental}. 
\end{proof}

Next, at $v=\operatorname{stopgrad}(u^\theta)$, based on the identity \eqref{eq:Jtheta_identity},
\begin{align}
    \nabla_\theta J(\theta)&=\mathbb E\left[\int_0^T\left(\partial_\theta u^\theta(t,X_t^v,\mu_t^v)\right)^T\left(u^\theta(t,X_t^v,\mu_t^v)+\sigma_t^T a(t;\mathbb{X}^{v},v)\right)\,dt\right] \\
    &=\mathbb E\left[ \frac{1}{2} \int^T_0 \nabla_\theta |u^\theta(t,X^{v},\mcal{L}_{X^{{v}}})|^2 dt + \int^T_0  \left( \nabla_\theta u^\theta(t,X^{v},\mcal{L}_{X^{v}}) \right)^T \sigma^T_t a(t;\mathbb{X}^{v},v) dt \right] \\
    & = \frac{1}{2}  \E \left[\nabla_\theta \int^T_0 |u^\theta(t,X^{v},\mcal{L}_{X^{v}}) + \sigma^T_t a(t;\mathbb{X}^{v}, v) |^2 dt \right]. \label{eq:a_path}
\end{align}
\subsubsection*{The Basic Adjoint Matching}
Motivated by \eqref{eq:a_path}, we define the \emph{basic mean-field adjoint matching loss}
\begin{align}
    \mcal{L}_{\rm B-MFAM}(v;u):=\frac12\int_0^T\left|v(t,X_t^u,\mu_t^u)+\sigma_t^Ta(t;\mathbb{X}^u,u)\right|^2dt. \label{eq:mfam_loss}
\end{align}
When $u=u^\theta$ is parametrized by $\theta$, the stop-gradient convention $u=\mathsf{stopgrad} u^\theta$ yields
\begin{align}
    \nabla_\theta^{\rm sg}\E\left[\mcal{L}_{\rm B-MFAM}(u^\theta;u)\right]=\nabla_\theta J(\theta). \label{eq:mfam_gradient_equivalence}
\end{align}
As such, the gradient of the stochastic optimal control objective can be recovered from a regression-type loss in which the reference feedback, forward trajectory, and adjoint are frozen. In what follows, we show that every functional critical point satisfying \eqref{eq:functional_critical} induces the unique optimal control process. 
\begin{align}
    u(t,X^u_t,\mu^u_t) = -\sigma^T_t \E[a(t;\mathbb{X}^{ u}, u)|X^{u}_t] \label{eq:functional_critical}
\end{align}
 We emphasize that, from this point onward, we no longer restrict the control to a parametrization $u^\theta$. Consequently, the critical-point condition derived below is understood in the functional sense, rather than with respect to a particular class such as neural networks. For a generic control $u$ which is sufficiently regular, we abuse notation by still using $a_t$ for the the mean-field adjoint $a_t=a(t;\mathbb{X}^u,u)$ as the unique solution of the backward mean-field ODE: 
\begin{align}
    \frac{d}{ds}a_s &= -\nabla_x\widetilde b^u(s,X_s,\mu_s)^Ta_s-\nabla_x\widetilde f^u(s,X_s,\mu_s) -\widehat{\mathbb E}\left[D_\mu\widetilde b^u(s,\widehat X_s,\mu_s)(X_s)^T\widehat a_s\right]-\widehat{\mathbb E}\left[D_\mu\widetilde f^u(s,\widehat X_s,\mu_s)(X_s)\right], \quad \forall s \in [06,T]\\
    a_T &= \nabla_xg(X_T,\mu_T) +\widehat{\mathbb E}\left[D_\mu g(\widehat X_T,\mu_T)(X_T)\right] \label{eq:a_continuous}. 
\end{align}

For $dt$-a.e. $t$, the $L^2$ minimization of $\E[\mcal{L}_{\rm B-MFAM}(v;u)]$ is the projection problem. Hence, for $\mu_t^u$-almost every $x$,
\begin{align}
    v(t,x,\mu_t^u)&=-\sigma_t^T\E\left[a(t;\mathbb{X}^u,u)\mid X_t^u=x\right]
    \label{eq:mfam_regression_target}.
\end{align}
To obtain the adjoint-matching fixed point, we set the reference control equal to the current control while stopping the gradient through the reference argument. More precisely, let $\bar u:=\mathsf{stopgrad}(u)$ and consider
\begin{align}
    \E[\mcal{L}_{\rm B- MFAM}(u;\bar u)].
\end{align}
At a functional stop-gradient critical point, $\bar u$ and $u$ have the same numerical value, while the dependence through $X^{\bar u}$ and $a(t;\mathbb{X}^{\bar u},\bar u)$ is frozen under differentiation. Hence \eqref{eq:mfam_regression_target} becomes the self-consistency relation
\begin{align}
    u(t,x,\mu_t^u)
    &=-\sigma_t^T\E\left[a(t;\mathbb{X}^u,u)\mid X_t^u=x\right] \qquad \mu_t^u\text{-a.e. }x.
    \label{eq:mfam_self_consistency_flow}
\end{align}

Next,  we establish that such control process \eqref{eq:mfam_self_consistency_flow} coincides with the optimal control of the original problem via the SMP.  First, denote $\E_{t}[\cdot]:= \E[\cdot|\mcal{F}_{t}]$ and that $\bar Y_t:=\E_{t}[a_t]$. Since the controlled McKean--Vlasov dynamics are Markovian in $(X_t^u,\mu_t^u)$ and, in the absence of common noise, the law flow $\mu_t^u=\mcal L_{X_t^u}$ is deterministic, the conditional projection admits a Markovian representation. Hence,
\begin{align}
    \bar Y_t:=\E[a_t\mid\mcal F_t] =\E[a_t\mid X_t^u]. \label{eq:conditional_adjoint_markov}
\end{align}
We have that by \eqref{eq:a_continuous} and the tower property:
\begin{align}
    \E_{t}[a_t]
    &= \E_t\Big[a_T + \int^T_t \nabla_x\widetilde b^u(s,X_s,\mu_s)^T \E_s[a_s]+\nabla_x\widetilde f^u(s,X_s,\mu_s)+\\
    &+\widehat{\mathbb E}\big[D_\mu\widetilde b^u(s,\widehat X_s,\mu_s)(X_s)^T \widehat{\E}_s[\widehat a_s] \big]+ \widehat{\mathbb E}\big[D_\mu\widetilde f^u(s,\widehat X_s,\mu_s)(X_s)\big] ds\Big] \label{eq:mart_1}
\end{align}
Denote $\nabla_x\widetilde b^u(t,X_t,\mu_t)^T \bar Y_t+\nabla_x\widetilde f^u(t,X_t,\mu_t)+ \widehat{\mathbb E}\left[D_\mu\widetilde b^u(t,\widehat X_t,\mu_t)(X_t)^T \widehat{\bar Y_t} \right]+ \widehat{\mathbb E}\left[D_\mu\widetilde f^u(t,\widehat X_t,\mu_t)(X_t)\right]:=F_t$, we have: 
\begin{align}
    \eqref{eq:mart_1}= \E_t[a_T + \int^T_0 F_s ds - \int^t_0 F_s ds] \\
    \Rightarrow \bar Y_t = M_t - \int^t_0 F_s ds
\end{align}
where $M_t:=\E_t[a_T + \int^T_0 F_s ds]$ is a square integrable martingale. 

Then, by the Martingale representation theorem and definition of $\bar Y$, there exists a predictable square-integrable process $\bar Z_t$ such that: 
\begin{align}
    d\bar Y_t =-\left(\nabla_x\widetilde b^u(t,X_t,\mu_t)^T \bar Y_t+\nabla_x\widetilde f^u(t,X_t,\mu_t)+ \widehat{\mathbb E}\left[D_\mu\widetilde b^u(t,\widehat X_t,\mu_t)(X_t)^T \widehat{\bar Y_t} \right]+ \widehat{\mathbb E}\left[D_\mu\widetilde f^u(t,\widehat X_t,\mu_t)(X_t)\right] \right)dt + \bar{Z}_t dW_t \label{eq:bar_y_1st}
\end{align}
with the terminal condition $\bar Y_T = a_T$.
Recall that
\begin{align}
    \nabla_x\widetilde b^u =\nabla_xb+\sigma_t\nabla_xu, \qquad \nabla_x\widetilde f^u =\nabla_xf+(\nabla_xu)^Tu.
\end{align}
The criticality condition gives $(\nabla_x u)^T (\sigma^T_t \bar Y_t +u)=0$.
Similarly, we have
\begin{align}
    \widehat\E\left[D_\mu u(t,\widehat X_t^u,\mu_t^u)(X_t^u)^T\left(\widehat u_t+\sigma_t^T\widehat{\overline Y}_t\right)\right]=0.
\end{align}
As such, at the critical point we have the following adjoint equation: 
\begin{equation}
    d\overline Y_t
    =-\big(\nabla_xb(t,X_t^u,\mu_t^u)^T\overline Y_t+\nabla_xf(t,X_t^u,\mu_t^u)+\widehat\E\big[
D_\mu b(t,\widehat X_t^u,\mu_t^u)(X_t^u)^T\widehat{\overline Y}_t+D_\mu f(t,\widehat X_t^u,\mu_t^u)(X_t^u)\big] \big)dt+\overline Z_tdW_t
\end{equation}
\noindent with terminal condition $\overline Y_T=\nabla_xg(X_T^u,\mu_T^u)+\widehat\E[D_\mu g(\widehat X_T^u,\mu_T^u)(X_T^u)]$. We note that it is exactly the adjoint mean-field BSDE in the setup of the Stochastic Maximum principle. Meanwhile, let the assumptions made in \cite{carmona2018probabilistic1} section 6.4.1 hold. Then by Theorem~6.19 therein, with the choice $u_t=-\sigma^T_t \bar Y_t$,  $(\overline X_t, \overline Y_t ,\overline Z_t)$  is the unique square-integrable adapted solution to the following coupled mean-field BSDE given the same terminal condition and $X_0 \sim \mu_0$. 
\begin{align}
    \begin{cases}
        dX_t = \left( b(t,X_t, \mu_t) - \sigma_t \sigma^T_t \overline Y_t \right) dt + \sigma_t dW_t \\ 
        d\overline Y_t  =-\Big(\nabla_xb(t,X_t,\mu_t)^T\overline Y_t+\nabla_xf(t,X_t,\mu_t)+\widehat\E\left[
D_\mu b(t,\widehat X_t,\mu_t)(X_t)^T\widehat{\overline Y}_t+D_\mu f(t,\widehat X_t,\mu_t)(X_t)\right] \Big)dt+\overline Z_tdW_t
    \end{cases}
\end{align}

The next theorem immediately shows that the self-consistent critical point of the expected basic adjoint matching loss is the unique optimal control process to the original problem. 
\begin{theorem}\label{thm:mfam_optimality}
Assume that the regularity conditions required above hold and that the convexity assumptions of the sufficient mean-field stochastic maximum principle are satisfied\footnote{The assumption made earlier as in \cite{carmona2018probabilistic1} again suffices. Assume $f$ satisfies the $L-$convexity assumption. The $L-$convexity condition is then satisfied naturally for $\tilde f$ because in our case: $\tilde f(t,x,\mu,a):=f(t,x,\mu)+\frac{1}{2}|a|^2$, and so $\tilde f(t,x',\mu',a')-\tilde f(t,x,\mu,a) -\partial_x \tilde f(t,x,\mu,a)(x'-x) - \widehat{\E}[D_\mu \tilde f(t,x,\mu,a)(\widehat{X}) (\widehat{X'}-\widehat{X})] - \partial_a \tilde f(t,x,\mu,a)(a'-a) \geq \frac{1}{2}|a'-a|^2 $.} in particular, $(x,\mu)\mapsto g(x,\mu)$ is L-convex and, for $dt\otimes d\mathbb{P}$-a.e. $(t,\omega)$,
\begin{align}
    (x,\mu,a)\mapsto H(t,x,\mu,Y_t^u,Z_t^u,a)
\end{align}
is jointly L-convex. Then every functional stop-gradient critical point of the expected basic adjoint matching loss induces the unique optimal control process.
\end{theorem}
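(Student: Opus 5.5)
The argument verifies the three hypotheses of the sufficient SMP (Theorem \ref{them:sufficient_smp}) for the control produced by a critical point, and then uses strict convexity in the control to get uniqueness. Let $u$ be a functional stop-gradient critical point of $\E[\mcal{L}_{\rm B-MFAM}(u;\bar u)]$ with $\bar u=\mathsf{stopgrad}(u)$. By \eqref{eq:mfam_regression_target}–\eqref{eq:mfam_self_consistency_flow}, $u$ satisfies the self-consistency relation $u(t,X_t^u,\mu_t^u)=-\sigma_t^T\E[a_t\mid X^u_t]$. By \eqref{eq:conditional_adjoint_markov} this equals $-\sigma_t^T\overline Y_t$, where $\overline Y_t=\E_t[a_t]$.

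\textbf{Step 1: $(\overline Y,\overline Z)$ is the SMP adjoint pair.} The computation preceding the theorem shows that $(\overline Y,\overline Z)$, obtained by conditioning the pathwise mean-field adjoint ODE \eqref{eq:a_continuous} and applying martingale representation, solves \eqref{eq:bar_y_1st}. The criticality relation $u+\sigma_t^T\overline Y_t=0$ makes the terms $(\nabla_xu)^T(u+\sigma_t^T\overline Y_t)$ vanish. Its independent-copy analogue removes the Lions-derivative terms $\widehat\E[D_\mu u(\cdot)^T(\widehat u+\sigma^T\widehat{\overline Y})]$. Hence $(\overline Y,\overline Z)$ solves exactly the mean-field BSDE \eqref{mf_fbsde} for the Hamiltonian $H(t,x,\mu,y,z,a)=f(t,x,\mu)+\tfrac12|a|^2+(b(t,x,\mu)+\sigma_ta)^Ty+\Tr(\sigma_t^Tz)$, with the same terminal condition. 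By uniqueness of square-integrable solutions of this linear mean-field BSDE (given the frozen forward path $X^u$), $(\overline Y,\overline Z)=(Y^u,Z^u)$.

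\textbf{Step 2: Hamiltonian minimization.} For fixed $(t,x,\mu,y,z)$, the map $a\mapsto H$ is the strictly convex quadratic $\tfrac12|a|^2+a^T\sigma_t^Ty+\text{const}$. Its unique minimizer over $U$, taking $U=\bR^{d_1}$ or assuming the unconstrained minimizer is admissible, is $a^*=-\sigma_t^Ty$. Evaluated along the trajectory, this minimizer is $-\sigma_t^TY^u_t=u_t$. Hence condition 3 of Theorem \ref{them:sufficient_smp} holds $dt\otimes d\mathbb P$-a.e.

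\textbf{Step 3: optimality and uniqueness.} The L-convexity of $g$ and of $(x,\mu,a)\mapsto H$ is assumed. Theorem \ref{them:sufficient_smp} therefore gives $J(u)=\inf_vJ(v)$. For uniqueness I would rerun the standard sufficiency argument: apply Itô's formula to $(X^v_t-X^u_t)^TY^u_t$ and use the L-convexity inequalities. The footnote gives the strengthened inequality with the extra term $\tfrac12|a'-a|^2$, which yields
\[
J(v)\ge J(u)+\tfrac12\E\int_0^T|v_t-u_t|^2dt .
\]
If $v$ is also optimal, then $v=u$ $dt\otimes d\mathbb P$-a.e.; alternatively one can invoke the well-posedness of the coupled FBSDE (Theorem 6.19 of \cite{carmona2018probabilistic1}). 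So every critical point induces the same control process, namely the unique optimal one.

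\textbf{Main obstacle.} I expect Step 1 to be the delicate part. The regression identity holds only $\mu^u_t$-a.e., so the cancellation of the $D_\mu u$ terms needs care. The term $\widehat\E[D_\mu u(t,\widehat X_t,\mu_t)(X_t)^T(\widehat u_t+\sigma_t^T\widehat a_t)]$ is written with $\widehat a_t$, not $\widehat{\overline Y}_t$. It vanishes only after conditioning on $\widehat X_t$, using that $D_\mu u(t,\widehat X_t,\mu_t)(x)$ is $\sigma(\widehat X_t)$-measurable for each fixed $x$. The $\nabla_xu$ term similarly needs the tower property inside the conditional expectation. I would handle both by conditioning first, and by using that $\mu_t$ is deterministic in the absence of common noise.
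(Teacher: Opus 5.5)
Your proposal is correct and follows the paper's route. The pre-theorem computation uses the criticality relation $u+\sigma_t^T\overline Y_t=0$ to remove the $\nabla_x u$ and $D_\mu u$ terms, which identifies $\overline Y_t=\E_t[a_t]$ as the SMP adjoint; the quadratic Hamiltonian is then minimized at $a=-\sigma_t^T\overline Y_t=u_t$, and Theorem~\ref{them:sufficient_smp} gives optimality. For uniqueness the paper just cites strong convexity of $H$ in $a$ with $\lambda=\tfrac12$ and Remark~6.17 of \cite{carmona2018probabilistic1}, which yields exactly the estimate $J(v)\ge J(u)+\tfrac12\E\int_0^T|v_t-u_t|^2\,dt$ that you derive directly via It\^o's formula.
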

\begin{proof}
    Under our current setup, the Hamiltonian takes the form: 
\begin{align}
    H(t,x,\mu,y,z,a):=f(t,x,\mu)+\frac{1}{2}|a|^2+(b(t,x,\mu)+\sigma_ta)^Ty+\rm Tr(\sigma_t^Tz).
\end{align}
It is strictly convex in the control variable and when take $u_t = -\sigma^T_t Y_t$, the minimization of  
\begin{align}
    \text{argmin}_{a \in \bR^{d_1}} H(t,X^u_t,\mu^u_t,Y^u_t,Z^u_t,a)
\end{align}
is attained by at $a=-\sigma^T_t Y^u_t$. Hence, by \ref{them:sufficient_smp}, we conclude that the self-consistent critical point of the expected basic mean-field adjoint matching loss is an optimal control of the MFC problem. To obtain uniqueness, note that under our setup, 
\begin{align}
    H(t,x',\mu',y,z,a')- H(t,x,\mu,y,z,a) &-  \partial_a H(t,x,\mu,y,z,a)^T(a'-a) - \partial_x H(t,x,\mu,y,z,a)^T(x'-x)\\
    -\widehat \E[D_\mu H(t,x,\mu,y,z,a)(\widehat{X}) (\widehat{X}' -\widehat{X})] \geq \frac{1}{2}|a'-a|^2
\end{align}
Then in the convention of \cite{carmona2018probabilistic1}, the Hamiltonian is strongly convex in $a$ with $\lambda=\frac{1}{2}$. Then uniqueness follows by Remark 6.17 \cite{carmona2018probabilistic1} and the proof that follows. 
\end{proof}
\subsubsection*{The Lean Adjoint Matching}
Recall that, for a given feedback control $u$,
\begin{align}
    \widetilde b^u(t,x,\mu)
    &:=b(t,x,\mu)+\sigma_tu(t,x,\mu),\\
    \widetilde f^u(t,x,\mu)
    &:=f(t,x,\mu)+\frac{1}{2}|u(t,x,\mu)|^2.
\end{align}
The basic mean-field adjoint equation therefore involves the derivatives $\nabla_x\widetilde b^u,D_\mu\widetilde b^u,\nabla_x\widetilde f^u,D_\mu\widetilde f^u$.
Although the derivatives of the prescribed functions $b$ and $f$ can be
computed independently of the optimization iterations, the frozen feedback $\bar u$ changes after each control update. Consequently, evaluating the basic mean-field adjoint requires repeatedly computing the state and Lions derivatives $\nabla_x\bar u$ and $D_\mu\bar u$. To avoid these additional control-derivative calculations, we introduce a \emph{lean mean-field adjoint matching} procedure, in which the terms involving $\nabla_x\bar u$ and $D_\mu\bar u$ are removed from the backward adjoint equation while the derivatives of the base drift $b$ and running cost $f$ are retained:
\begin{align}
     \mcal{L}_{\rm L-MFAM}(w;u)& :=\frac{1}{2}\int_0^T\left|w(t,X_t^{ u},\mu_t^{u})+\sigma_t^T \tilde{a}(t;\mathbb{X}^{u}, u)\right|^2dt, \quad \mathbb{X}^u \sim p^{u}  \quad u :=\mathsf{stopgrad}(w)\\ 
     \frac{d}{dt} \tilde a_t &= -\nabla_x b(t,X_t,\mu_t)^T \tilde a_t-\nabla_x f(t,X_t,\mu_t) -\widehat{\mathbb E}\left[D_\mu  b(t,\widehat X_t,\mu_t)(X_t)^T\widehat{\tilde{a}_t}\right]-\widehat{\mathbb E}\left[D_\mu  f(t,\widehat X_t,\mu_t)(X_t)\right]\\
    \tilde a_T &= \nabla_xg(X_T,\mu_T) +\widehat{\mathbb E}\left[D_\mu g(\widehat X_T,\mu_T)(X_T)\right] \label{eq:lean_mf_adjoint}.
\end{align}
In the following paragraphs, we show that every functional critical point of the expected lean mean-field adjoint matching loss is also a functional critical point of the basic mean-field adjoint matching loss. It then follows from Theorem \ref{thm:mfam_optimality} that such a critical point coincides with the optimal control of the original mean-field control problem.  For that, we first show a lemma. 
\begin{lemma}\label{thm:lemma_uniqueness}
    Let $u$ be an arbitrary control. Assume that $\tilde b(t,x,\mu)=b(t,x,\mu)+\sigma_t u(t,x,\mu)$ is continuously differentiable in $x$ and Lions differentiable in $\mu$. And that 
    \begin{align}
        |\nabla \tilde b(t,x,\mu)| & \leq L_x, \qquad |D_\mu \tilde b(t,x,\mu)(y)| \leq L_\mu.
    \end{align}
Then the following has at most one square-integrable solution. 
\begin{align}
    Y_t &= \E \Bigg[ \int^T_t \Big( \nabla \tilde b(s,X^{u}_s,\mu^{u}_s)^T Y_s+\nabla \tilde f(s,X^{u}_s,\mu^{u}_s) +\widehat{\mathbb E}\left[D_\mu \tilde{b}(s,\widehat{X^{u}_s},\widehat{\mu^{u}_s})(X^{u}_s)^T \hat{Y}_s\right]\\
    &+ \widehat{\mathbb E}\left[D_\mu \tilde f(s,\widehat{X^{u}_s},\widehat{\mu^{u}_s})(X^{u}_s)\right] \Big) ds +\nabla g(X^{u}_T, \mu^{u}_T)+ \widehat{\mathbb E} [D_\mu g(\widehat{X^{u}_T}, \widehat{\mu^{u}_T})(X^{u}_T)] \bigg| X^{u}_t \Bigg] \label{eq:adjoint_equiv_lemma}.
\end{align}
\end{lemma}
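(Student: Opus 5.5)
Suppose $Y^1$ and $Y^2$ are two square-integrable solutions of \eqref{eq:adjoint_equiv_lemma} for the same control $u$. The plan is to show that their difference vanishes by a backward Gronwall argument on its second moment. The forward process $X^u$ and the law flow $\mu^u_t$ do not depend on $Y$, so the state is identical for both solutions. The inhomogeneous terms $\nabla\tilde f$, $\widehat\E[D_\mu\tilde f]$ and the terminal term therefore cancel in the difference. Setting $\delta Y_t:=Y^1_t-Y^2_t$, linearity of the right-hand side gives
\begin{align}
    \delta Y_t=\E\Big[\int_t^T\Big(\nabla\tilde b(s,X^u_s,\mu^u_s)^T\delta Y_s+\widehat\E\big[D_\mu\tilde b(s,\widehat X^u_s,\mu^u_s)(X^u_s)^T\widehat{\delta Y}_s\big]\Big)ds\,\Big|\,X^u_t\Big].
\end{align}

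The first step is to bound $\E|\delta Y_t|^2$. Conditional Jensen moves the square inside the conditional expectation, and the tower property then removes it. Cauchy--Schwarz in time gives
\begin{align}
    \E|\delta Y_t|^2\le (T-t)\int_t^T\E\Big|\nabla\tilde b^T\delta Y_s+\widehat\E[D_\mu\tilde b(\cdot)(X^u_s)^T\widehat{\delta Y}_s]\Big|^2ds.
\end{align}
Using $|\nabla\tilde b|\le L_x$ and $|D_\mu\tilde b|\le L_\mu$, the independent-copy term is controlled by Jensen under $\widehat\E$:
\begin{align}
    \big|\widehat\E[D_\mu\tilde b^T\widehat{\delta Y}_s]\big|^2\le L_\mu^2\,\widehat\E|\widehat{\delta Y}_s|^2=L_\mu^2\,\E|\delta Y_s|^2,
\end{align}
where the last equality holds because the copy has the same law. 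Combining these bounds yields
\begin{align}
    \E|\delta Y_t|^2\le 2T(L_x^2+L_\mu^2)\int_t^T\E|\delta Y_s|^2\,ds.
\end{align}

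The second step is to apply the backward Gronwall inequality to the function $\phi(t):=\E|\delta Y_t|^2$. This function is integrable on $[0,T]$ by the square-integrability of the two solutions. Gronwall then gives $\phi\equiv0$, hence $Y^1_t=Y^2_t$ almost surely for every $t$, which proves that \eqref{eq:adjoint_equiv_lemma} has at most one square-integrable solution.

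The main obstacle is making sense of the mean-field term. One must check that $\widehat{\delta Y}_s$ is genuinely an independent copy sharing the law of $\delta Y_s$. This requires that the pair $(X^u,Y^i)$ is jointly defined on the product space and that $Y^i_s$ is a measurable function of $X^u_s$, which follows from the conditioning on $X^u_s$ in \eqref{eq:adjoint_equiv_lemma}. One also needs the measurability of $s\mapsto\phi(s)$ in order to apply Fubini. Here I would take "square-integrable solution" to mean $\sup_t\E|Y_t|^2<\infty$ together with joint measurability, and state this interpretation explicitly.
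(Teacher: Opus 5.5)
Your proposal is correct and follows essentially the same route as the paper. You subtract the two solutions so the $\tilde f$ and terminal terms cancel, then bound the second moment using $|\nabla\tilde b|\le L_x$, $|D_\mu\tilde b|\le L_\mu$ and the equality in law of the independent copy. This gives $\E|\delta Y_t|^2\le 2T(L_x^2+L_\mu^2)\int_t^T\E|\delta Y_s|^2\,ds$, and you conclude by backward Gronwall.

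The only difference is cosmetic. You apply conditional Jensen and Cauchy--Schwarz in time directly to the second moment, whereas the paper first bounds $|\delta Y_t|$ pathwise via conditional Cauchy--Schwarz, then squares and takes expectations. Your remarks on measurability and on $\widehat{\delta Y}_s$ being a genuine independent copy make explicit points the paper leaves implicit.
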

\begin{proof}
    Let $Y^1, Y^2$ be two solutions of \eqref{eq:adjoint_equiv_lemma}, then we take the difference between the two to get: 
    \begin{align}
        Y_t^1-Y_t^2=  \E \Bigg[ \int^T_t \nabla \tilde b(s,X^{u}_s,\mu^{u}_s)^T (Y^1_s-Y^2_s)+\widehat{\mathbb E}\left[D_\mu \tilde{b}(s,\widehat{X^{u}_s},\widehat{\mu^{u}_s})(X^{u}_s)^T (\hat{Y}^1_s-\hat{Y}^2_s)\right] ds \bigg| X^{u}_t \Bigg]
    \end{align}
This gives that by Cauchy's inequality: 
\begin{align}
    |Y_t^1-Y_t^2| &\leq \int^T_t \E \Big[ |\nabla \tilde b(s,X^{u}_s,\mu^{u}_s)| |Y^1_s-Y^2_s| ds \big| X^{u}_t \Big]ds +
    \int^T_t \E \left[ \widehat{\mathbb E}\left[|D_\mu \tilde{b}(s,\widehat{X^{u}_s},\widehat{\mu^{u}_s})(X^{u}_s)| |\hat{Y}^1_s-\hat{Y}^2_s| \right]  ds \big| X^{u}_t \right]ds \\
    &\leq \int^T_t \E \Big[ |\nabla \tilde b(s,X^{u}_s,\mu^{u}_s)|^2 \big| X^{u}_t \Big]^{\frac{1}{2}} \E \Big[  |Y^1_s-Y^2_s|^2  \big| X^{u}_t \Big]^{\frac{1}{2}} ds \\
    &+ \int^T_t \E \left[ \widehat{\mathbb E}\left[|D_\mu \tilde{b}(s,\widehat{X^{u}_s},\widehat{\mu^{u}_s})(X^{u}_s)|^2 \right]   \big| X^{u}_t \right]^{\frac{1}{2}} \E \left[|Y^1_s-Y^2_s|^2 \right]^{\frac{1}{2}}  ds
\end{align}
In the last step, we used the fact that $\E \left[|Y^1_s-Y^2_s|^2 \right] = \widehat{\E} \left[|\hat Y^1_s-\hat Y^2_s|^2 \right]$.
Then, squaring both sides and use Cauchy's inequality and boundness assumption on $\nabla b, D_\mu b$ we have
\begin{align}
   |Y_t^1-Y_t^2|^2 & \leq 2 \int^T_t \E \Big[ |\nabla \tilde b(s,X^{u}_s,\mu^{u}_s)|^2 \big| X^{u}_t \Big] ds \int^T_t \E \Big[  |Y^1_s-Y^2_s|^2  \big| X^{u}_t \Big] ds \\
    &+ 2\int^T_t \E \left[ \widehat{\mathbb E}\left[|D_\mu \tilde{b}(s,\widehat{X^{u}_s},\widehat{\mu^{u}_s})(X^{u}_s)|^2 \right]  \big| X^{u}_t \right] ds \int^T_t \E \left[|Y^1_s-Y^2_s|^2 \right]  ds \\ 
    & \leq 2(T-t) L^2_{x} \int^T_t \E \Big[  |Y^1_s-Y^2_s|^2  \big| X^{u}_t \Big] ds +2(T-t) L^2_\mu \int^T_t  \E \Big[  |Y^1_s-Y^2_s|^2 \Big] ds. 
\end{align}
Taking expectation on both sides, we obtain: 
\begin{align}
    \E[|Y_t^1-Y_t^2|^2] \leq 2T(L^2_x+L^2_\mu)\int^T_t \E \Big[  |Y^1_s-Y^2_s|^2 \Big] ds.
\end{align}
Then by the backward Gronwall's inequality, we have: 
\begin{align}
   \E[|Y^1_t - Y^2_t|^2]=0 \qquad \Rightarrow \qquad Y^1_t = Y^2_t, \quad \text{a.s. \ } \forall t \in [0,T]. 
\end{align}
\end{proof}
\begin{proposition}
Let $u'$ be a functional stop-gradient critical point of
$\E[\mcal{L}_{\rm L-MFAM}(w;u)]|_{u=w}$ and assume that
$\widetilde b^{u'}$ satisfies the regularity and boundedness assumptions of
Lemma~\ref{thm:lemma_uniqueness}. Then $u'$ is also a functional
stop-gradient critical point of the basic mean-field adjoint matching loss.
Consequently, by Theorem~\ref{thm:mfam_optimality}, $u'$ coincides with the
optimal feedback control $u^*$ of the original mean-field control problem.
\end{proposition}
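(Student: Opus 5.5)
The plan is to show that, at a lean critical point $u'$, the lean adjoint's conditional expectation solves the same conditional fixed-point equation \eqref{eq:adjoint_equiv_lemma} as the basic adjoint's conditional expectation. Lemma~\ref{thm:lemma_uniqueness} then identifies the two, so the lean self-consistency relation becomes the basic one.

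\textbf{Step 1: the lean critical point.} As in \eqref{eq:mfam_regression_target}, for $dt$-a.e.\ $t$ the $L^2$ minimisation of $\E[\mcal{L}_{\rm L-MFAM}(w;u)]$ over $w$, with $u$ frozen, is a conditional projection. Hence at a stop-gradient critical point $u'$ we obtain
\[
u'(t,x,\mu_t^{u'})=-\sigma_t^T\widetilde Y_t(x),\qquad \widetilde Y_t:=\E[\tilde a_t\mid X_t^{u'}],
\]
for $\mu^{u'}_t$-a.e.\ $x$. Here $\tilde a$ is the lean adjoint \eqref{eq:lean_mf_adjoint} evaluated along $X^{u'}$.

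\textbf{Step 2: the equation satisfied by $\widetilde Y$.} Integrate \eqref{eq:lean_mf_adjoint} backward from $T$ and condition on $X_t^{u'}$. Using the tower property and the Markov structure \eqref{eq:conditional_adjoint_markov}, exactly as in \eqref{eq:mart_1}, $\tilde a_s$ inside the integral may be replaced by $\widetilde Y_s$, and $\widehat{\tilde a}_s$ by $\widehat{\widetilde Y}_s$. This gives the conditional representation \eqref{eq:adjoint_equiv_lemma}, but with $\nabla_x b$, $D_\mu b$, $\nabla_x f$, $D_\mu f$ in place of the tilded coefficients.

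To pass to the tilded coefficients, I add the zero terms
\[
(\nabla_x u')^T\bigl(u'+\sigma_s^T\widetilde Y_s\bigr)
\qquad\text{and}\qquad
\widehat\E\Bigl[D_\mu u'(s,\widehat X_s,\mu_s)(X_s)^T\bigl(\widehat u'_s+\sigma_s^T\widehat{\widetilde Y}_s\bigr)\Bigr].
\]
Both vanish by Step~1: the first pointwise in $s$, and the second because the self-consistency holds $\mu_s$-a.e.\ for the copy. Since $\nabla_x\widetilde b^{u'}=\nabla_xb+\sigma\nabla_xu'$ and $\nabla_x\widetilde f^{u'}=\nabla_xf+(\nabla_xu')^Tu'$, with the analogous identities for the Lions derivatives, $\widetilde Y$ then satisfies \eqref{eq:adjoint_equiv_lemma} with $u=u'$.

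\textbf{Step 3: the basic adjoint and uniqueness.} Let $a$ be the basic adjoint \eqref{eq:a_continuous} for $u'$, and set $\bar Y_t:=\E[a_t\mid X^{u'}_t]$. The same conditioning argument as in \eqref{eq:mart_1} shows directly that $\bar Y$ satisfies \eqref{eq:adjoint_equiv_lemma}. Both $\widetilde Y$ and $\bar Y$ are square integrable, because the coefficients are bounded and $g$, $f$ have linear-growth derivatives. Lemma~\ref{thm:lemma_uniqueness} therefore gives $\widetilde Y=\bar Y$ a.s.

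Substituting into Step~1 yields $u'(t,x,\mu^{u'}_t)=-\sigma_t^T\E[a(t;\mathbb X^{u'},u')\mid X^{u'}_t=x]$. This is exactly \eqref{eq:mfam_self_consistency_flow}, so $u'$ is a stop-gradient critical point of the basic loss, and Theorem~\ref{thm:mfam_optimality} gives $u'=u^*$.

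\textbf{Main obstacle.} The delicate point is Step~2's handling of the mean-field terms. The identity $\E[\widehat\E[D_\mu b(\cdot)(X_s)^T\widehat{\tilde a}_s]\mid X_t]=\E[\widehat\E[D_\mu b(\cdot)(X_s)^T\widehat{\widetilde Y}_s]\mid X_t]$ requires conditioning inside the independent copy on $\widehat{\mcal F}_s$, as in the proof of Proposition~\ref{prop:unbiased_mfbsde}, rather than on $\mcal F_t$. The cancellation of the $D_\mu u'$ term likewise needs the self-consistency relation to hold for the copy variable, which is why the relation must hold $\mu_s$-a.e.\ and not only along the trajectory. I would verify this carefully by writing the copy expectations as integrals against $\mu_s$.
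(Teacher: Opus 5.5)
Your proposal is correct and takes essentially the paper's route. The paper likewise obtains $u'=-\sigma_t^T\E[\tilde a_t\mid X_t^{u'}]$ from the conditional-variance split of the lean loss, and then uses the vanishing identity \eqref{eq:u_diff} (exactly the sum of your two zero terms) with the tower property to show that the conditioned lean adjoint solves \eqref{eq:adjoint_equiv_lemma} with the tilded coefficients. It finishes by applying Lemma~\ref{thm:lemma_uniqueness} against the conditioned basic adjoint, as you do. Adding the two terms separately, rather than passing through $\tfrac12 D_\mu\E[|u'|^2]$ as the paper does, is a slightly cleaner way of writing the same cancellation.
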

\begin{proof}
    We note that the lean adjoint loss can be written as follows: 
    \begin{align}
        \E[\mcal{L}_{\rm L-MFAM}(w;u)] & = \frac{1}{2}\E\left[\int_0^T\left|w(t,X_t^{ u},\mu_t^{u})+\sigma_t^T \tilde{a}(t;\mathbb{X}^{u}, u)\right|^2dt\right] \\
        & = \frac{1}{2}\E\left[\int_0^T\left|w(t,X_t^{u},\mu_t^{u})+\sigma_t^T \E[\tilde{a}(t;\mathbb{X}^{u},u) \big | X^u_t ] \right|^2dt + \int_0^T \left|\sigma_t^T \left( \tilde{a}(t;\mathbb{X}^{u},u)- \E[\tilde{a}(t;\mathbb{X}^{u},u) \big | X^u_t ] \right) \right|^2dt\right].
    \end{align}
Taking the first variation with respect to $w$ and set it to zero, we obtain the following condition for the critical point $u'$: 
\begin{align}
    u'(t,X_t^{u'},\mu_t^{u'})= - \sigma_t^T \E[\tilde{a}(t;\mathbb{X}^{u'},u') \big | X^{u'}_t ]
\end{align}
Next, note that 
\begin{align}
   \frac{1}{2}  D_\mu \E[\left|{u}'(t,X^{u'}_t, \mu^{u'}_t) \right|^2](X^{u'}_t) &=  \nabla u'(t,X^{u'}_t, \mu^{u'}_t)^T u' + \widehat{\E}[D_\mu u'(t, \widehat{X^{u'}_t}, \widehat{\mu^{u'}_t})^T(X^{u'}_t) u'(t, \widehat{X^{u'}_t}, \widehat{\mu^{u'}_t}) ] \\
   & = - \nabla u'(t,X^{u'}_t, \mu^{u'}_t)^T \sigma_t^T \E[\tilde{a}(t;\mathbb{X}^{u'},u') \big | X^{u'}_t] - \widehat{\E}\left[D_\mu u'(t, \widehat{X^{u'}_t}, \widehat{\mu^{u'}_t})^T(X^{u'}_t) \sigma_t^T \widehat{\E}[\widehat{\tilde{a}_t} \big | \widehat{X^{u'}_t} ] \right] .
\end{align}
And so we have for $\forall t \in [0,T]$: 
\begin{align}
    \nabla u'(t,X^{u'}_t, \mu^{u'}_t)^T u' &+ \widehat{\E}[D_\mu u'(t, \widehat{X^{u'}_t}, \widehat{\mu^{u'}_t})^T(X^{u'}_t) u'(t, \widehat{X^{u'}_t}, \widehat{\mu^{u'}_t}) ] + \nabla u'(t,X^{u'}_t, \mu^{u'}_t)^T \sigma_t^T \E[\tilde{a}(t;\mathbb{X}^{u'},u') \big | X^{u'}_t] \\
    &+\widehat{\E}\left[D_\mu u'(t, \widehat{X^{u'}_t}, \widehat{\mu^{u'}_t})^T(X^{u'}_t) \sigma_t^T \widehat{\E}[\widehat{\tilde{a}_t} \big | \widehat{X^{u'}_t} ] \right]=0 \label{eq:u_diff}.
\end{align}
Then at $u=u'$ the critical point, writing $\tilde{a}'_t:=\tilde{a}(t;\mathbb{X}^{u'},u')$ we have: 
\begin{align}
    \E[\tilde{a}_t'\big | X^{u'}_t]&=\E\Bigg[ \int^T_t \nabla b(s,X^{u'}_s,\mu^{u'}_s)^T \tilde{a}_s +\nabla f(s,X^{u'}_s,\mu^{u'}_s)+\widehat{\mathbb E}\left[D_\mu b(s,\widehat{X^{u'}_s},\widehat{\mu^{u'}_s})(X^{u'}_s)^T\widehat{\tilde{a}_s}' \right] \\
    & +\widehat{\mathbb E}\left[D_\mu f(s,\widehat{X^{u'}_s},\widehat{\mu^{u'}_s})(X^{u'}_s)\right] ds + \nabla g(X^{u'}_T, \mu^{u'}_T)+ \widehat{\mathbb E} [D_\mu g(\widehat{X^{u'}_T}, \widehat{\mu^{u'}_T})(X^{u'}_T)]  \bigg|X^{u'}_t \Bigg]  \label{eq:lean_adjoint1} 
\end{align}
Combining \eqref{eq:u_diff} and \eqref{eq:lean_adjoint1}, using tower property, we have: 
\begin{align}
    \E[\tilde{a}_t'\big | X^{u'}_t] &= \E \Bigg[ \int^T_t \nabla \tilde b(s,X^{u'}_s,\mu^{u'}_s)^T \E[\tilde{a}'_s|X^{u'}_s]+\nabla \tilde f(s,X^{u'}_s,\mu^{u'}_s) +\widehat{\mathbb E}\left[D_\mu \tilde{b}(s,\widehat{X^{u'}_s},\widehat{\mu^{u'}_s})(X^{u'}_s)^T \widehat{\E}[\widehat{\tilde a_s'}|\widehat{X^{u'}_s}]\right]\\
    &+ \widehat{\mathbb E}\left[D_\mu \tilde f(s,\widehat{X^{u'}_s},\widehat{\mu^{u'}_s})(X^{u'}_s)\right] ds +\nabla g(X^{u'}_T, \mu^{u'}_T)+ \widehat{\mathbb E} [D_\mu g(\widehat{X^{u'}_T}, \widehat{\mu^{u'}_T})(X^{u'}_T)] \bigg| X^{u'}_t \Bigg] \label{eq:adjoint_equiv1}.
\end{align}
On the other hand for the basic adjoint matching case, by taking $v=u'$ in \eqref{eq:mfpa_adjoint} and take conditional expectation, we obtain: 
\begin{align}
    \E[a_t'\big | X^{u'}_t] &= \E \Bigg[ \int^T_t \nabla \tilde b(s,X^{u'}_s,\mu^{u'}_s)^T \E[a'_s|X^{u'}_s]+\nabla \tilde f(s,X^{u'}_s,\mu^{u'}_s) +\widehat{\mathbb E}\left[D_\mu \tilde{b}(s,\widehat{X^{u'}_s},\widehat{\mu^{u'}_s})(X^{u'}_s)^T \widehat{\E}[\widehat{a_s'}|\widehat{X^{u'}_s}]\right]\\
    &+ \widehat{\mathbb E}\left[D_\mu \tilde f(s,\widehat{X^{u'}_s},\widehat{\mu^{u'}_s})(X^{u'}_s)\right] ds +\nabla g(X^{u'}_T, \mu^{u'}_T)+ \widehat{\mathbb E} [D_\mu g(\widehat{X^{u'}_T}, \widehat{\mu^{u'}_T})(X^{u'}_T)] \bigg| X^{u'}_t \Bigg] \label{eq:adjoint_equiv2}.
\end{align}
Comparing \eqref{eq:adjoint_equiv1} and \eqref{eq:adjoint_equiv2}, since the previous lemma shows that the solution to \eqref{eq:adjoint_equiv1}/ \eqref{eq:adjoint_equiv2} is unique, we conclude that: 
\begin{align}
    \E[a_t'\big | X^{u'}_t] = \E[\tilde{a}_t'\big | X^{u'}_t] , \qquad \forall t \in [0,T]. 
\end{align}
Hence, the critical point of $ \E[\mcal{L}_{\rm L-MFAM}(w;u)]$, 
\begin{align}
   u'(t,X_t^{u'},\mu_t^{u'})= - \sigma_t^T \E[\tilde a (t;\mathbb{X}^{u'},u') \big | X^{u'}_t ]=- \sigma_t^T \E[a(t;\mathbb{X}^{u'},u') \big | X^{u'}_t ].
\end{align}
And clearly it is also a critical point to $\E[\mcal{L}_{\rm B-MFAM}(w;u)]|_{u=w}$:
\begin{align}
    \E[\mcal{L}_{\rm B\text{-}MFAM}(w;u)]|_{u=w}
    &:= \frac{1}{2}\E\left[ \int_0^T \left| w(t,X_t^{u},\mu_t^{u})
    +\sigma_t^Ta(t;\mathbb{X}^{u},u) \right|^2dt \right] \nonumber \\ 
    &= \frac{1}{2}\E\left[ \int_0^T \left| w(t,X_t^{u},\mu_t^{u})
    +\sigma_t^T \E[a(t;\mathbb{X}^{u},u)|X^u_t] \right|^2dt \right]\big|_{u=w} \\
    & +\frac{1}{2}\E\left[ \int_0^T \left| \sigma_t^T \big(\E[a(t;\mathbb{X}^{u},u)|X^u_t]- a(t;\mathbb{X}^{u},u)\big) \right|^2dt \right]\big|_{u=w} 
\end{align}
The conclusion then follows from Theorem~\ref{thm:mfam_optimality}. 
\end{proof}
We remark that the following Euler discretization \eqref{eq:lean_mf_adjoint_discrete} of equation \eqref{eq:lean_mf_adjoint}
\begin{align}
\widetilde a_n&=\widetilde a_{n+1}+\left[\nabla_x b_n^T\widetilde a_{n+1}+\nabla_x f_n+\widehat{\E}\left(D_\mu b_n^T\widehat{\widetilde a}_{n+1}+D_\mu f_n\right)\right]\Delta t,\\
    \widetilde a_N&=\nabla_x g(X_N,\mu_N)+\widehat{\E}\left[D_\mu g(\widehat X_N,\mu_N)(X_N)\right].\label{eq:lean_mf_adjoint_discrete}
\end{align}
is precisely the $Y$-recursion (see equation \eqref{d_mfbsde1}) of the sample-wise scheme in the current control-affine/quadratic setting. 

\section{RaNN basis and numerical stability}
In this section, we briefly discuss the formulation of function approximation via RaNN and discuss its implementation and stability. For simplicity, we consider a scalar output, i.e., $d_1=1$. At a fixed time point, the sample-based RaNN fitting problem can be written as
\begin{align}
    L(r)=\frac{1}{2}\|\Phi r-y\|_2^2,
    \qquad r\in\bR^L,\quad y\in\bR^M,\quad \Phi\in\bR^{M\times L}, \label{eq:no_ridge}
\end{align}
where $r$ denotes the trainable output weights and $\Phi$ is the feature matrix generated by the randomized basis functions $\{\phi_l\}_{l=1}^L$, with $\Phi_{ml}=\phi_l(x_m)$.

More specifically, let $\rho:\bR\rightarrow\bR$ be the activation function and
$\{(x_m,y_m)\}_{m=1}^M$, with $x_m\in\bR^d$ and $y_m\in\bR$, be the given data.
Let $W\in\bR^{L\times d}$ and $b\in\bR^L$ denote the randomly generated hidden-layer weights and biases respectively. Writing $w_l^\top$ for the $l$th row of $W$, the $l$th randomized basis function is
\begin{align}
    \phi_l(x)=\rho(w_l^\top x+b_l),
\end{align}
and hence
\begin{align}
    \Phi_{ml}=\phi_l(x_m)=\rho(w_l^\top x_m+b_l)=\rho\big((Wx_m+b)_l\big).
\end{align}

The first-order optimality condition for \eqref{eq:no_ridge} is
\begin{align}
\Phi^\top\Phi r=\Phi^\top y.
\end{align}
If $\Phi$ has full column rank, the least-squares coefficient is therefore
\begin{align}
r=(\Phi^\top\Phi)^{-1}\Phi^\top y.
\end{align}
This expression is only a theoretical representation; in numerical implementation, the inverse is not formed explicitly, and the corresponding linear system is solved directly.

Although the sample size is typically chosen such that $M\gg L$, this condition alone does not guarantee that $\Phi^\top\Phi$ is well-conditioned. In particular, different randomized features may be strongly correlated, causing the columns of $\Phi$ to become nearly linearly dependent. As a result, the Gram matrix $\Phi^\top\Phi$ may have a large condition number or may even become numerically singular. To improve stability, we employ ridge regression and consider
\begin{align}
L_\lambda(r)=\frac{1}{2}\|\Phi r-y\|_2^2+\frac{\lambda}{2}\|r\|_2^2,\qquad \lambda>0.
\end{align}
The corresponding first order condition and coefficient are
\begin{align}
(\Phi^\top\Phi+\lambda I)r=\Phi^\top y,\qquad r=(\Phi^\top\Phi+\lambda I)^{-1}\Phi^\top y.
\end{align}
To see that the ridge factor indeed helps improve stability overall, suppose that $\Phi^\top\Phi=V\Sigma^2V^\top$, where $\Sigma^2=\operatorname{diag}(\sigma_1^2,\ldots,\sigma_L^2)$ and $\sigma_1\geq\cdots\geq\sigma_L\geq0$. Then
\begin{align}
(\Phi^\top\Phi+\lambda I)^{-1}=V\operatorname{diag}\left(\frac{1}{\sigma_1^2+\lambda},\ldots,\frac{1}{\sigma_L^2+\lambda}\right)V^\top.
\end{align}
Thus, every eigenvalue of $\Phi^\top\Phi+\lambda I$ is bounded below by $\lambda$, and
\begin{align}
\|(\Phi^\top\Phi+\lambda I)^{-1}\|_2\leq\frac{1}{\lambda},\qquad \kappa_2(\Phi^\top\Phi+\lambda I)=\frac{\sigma_1^2+\lambda}{\sigma_L^2+\lambda}.
\end{align}
Consequently, ridge regularization prevents the regression system from becoming singular and reduces its sensitivity to small perturbations in the feature matrix and target values. However, an excessively small value of $\lambda$ may provide insufficient stabilization, whereas an excessively large value introduces additional regularization bias. In practice, $\lambda$ is selected empirically, for example from a small set of candidate values. 

In our experiments, since the optimal control is expected to vary regularly with both time and state, and the number of time steps $N$ can be large, we prefer to use a single global RaNN to approximate the control jointly in $(t,x)$. This avoids fitting independent approximations at every time step and allows information to be shared across neighboring time locations. For the global RaNN, a single coefficient vector is fitted jointly over all time locations. Let $\Phi_n\in\bR^{M\times L}$ and $y_n\in\bR^M$ denote the feature matrix and target vector at time $t_n$, respectively for $n=0,\ldots,N-1$. Define the stacked quantities
\begin{align}
\widetilde{\Phi}=(\Phi_0^\top,\ldots,\Phi_{N-1}^\top)^\top\in\bR^{NM\times L},\qquad \widetilde{y}=(y_0^\top,\ldots,y_{N-1}^\top)^\top\in\bR^{NM}.
\end{align}
In this case, the randomized features take both time and state as inputs. 
More precisely, let $W \in \bR^{L\times(d+1)}, b \in \bR^L$ and define $\bar{x}_{mn}=(t_n,x_{mn})$, then the $l$th randomized feature evaluated at $(t_n,x_{mn})$ is $\phi_l(t_n,x_{mn}):=\rho\big((W\bar{x}_{mn}+b)_l\big),$ so that
\begin{align}
    (\Phi_n)_{ml}=\phi_l(t_n,x_{mn})=\rho\big((W\bar{x}_{mn}+b)_l\big).
\end{align}
The global ridge-regression problem is similarly
\begin{align}
L_{\mathrm{global}}(\widetilde r)=\frac{1}{2}\|\widetilde{\Phi}\widetilde r-\widetilde y\|_2^2+\frac{\lambda}{2}\|\widetilde r\|_2^2.
\end{align}
Equivalently, using the block structure across time, we have: 
\begin{align}
L_{\mathrm{global}}(\widetilde r)=\frac{1}{2}\sum_{n=0}^{N-1}\|\Phi_n\widetilde r-y_n\|_2^2+\frac{\lambda}{2}\|\widetilde r\|_2^2.
\end{align}
The associated first order condition is then
\begin{align}
\left(\sum_{n=0}^{N-1}\Phi_n^\top\Phi_n+\lambda I\right)\widetilde r=\sum_{n=0}^{N-1}\Phi_n^\top y_n,
\end{align}
and hence
\begin{align}
\widetilde r=\left(\sum_{n=0}^{N-1}\Phi_n^\top\Phi_n+\lambda I\right)^{-1}\left(\sum_{n=0}^{N-1}\Phi_n^\top y_n\right).
\end{align}
Therefore, the global RaNN does not average the separately fitted timewise coefficients. Instead, it solves one pooled regression problem using all $NM$ time--state samples. This pooling can improve the effective information content of the regression system, although its conditioning still depends on the correlation structure of the global randomized features.

\section{Numerical examples}
In this section, we present five numerical examples to illustrate the effectiveness, flexibility, and practical potential of the proposed approach. The first example considers a standard stochastic optimal control problem without mean-field interaction. Its purpose is to benchmark the proposed method against a direct neural-network approach, while also examining the conditioning of the regression problem and the sensitivity of the RaNN approximation to the number of hidden features. The second example demonstrates that the proposed algorithm can effectively handle mean-field control problems with controlled diffusion, including settings in which the distribution of the control enters the dynamics or objective. The third example provides a toy fine-tuning problem to illustrate the use of the method for distributional adaptation. The fourth example considers a problem with constraints imposed not only on the terminal distribution but also on intermediate-time marginals, showing that the proposed approach can capture the prescribed pathwise distributional evolution while still matching the terminal target. Finally, the fifth example demonstrates the scalability and practical applicability of the method through an unpaired facial-expression transport task on the CelebA dataset.
\subsection{Example 1: A linear quadratic SOCP}
The loss functional is given by
\begin{align}
J(u)=\mathbb{E}[\frac{1}{2} \int^T_0 ( X^T Q_t X_t +  u^T_t R_t u_t ) dt + \frac{1}{2} X^T_T S X_T]
\end{align}
and $X_t$ follows the stochastic process 
\begin{align}
dX_t= (A_t X_t + B_t u_t) dt + C_t dW_t, && X_0 =x 
\end{align}
where $X \in \bR^d$ and $W \in \bR^m$. 

For this example, we take $m=d=20$ and $A_t=1.0 I, B_t=C_t=I, r_t=2.0, q_t=2.0 I, s_t=I$. This set of parameters are reported to be the `hard' setup according to \cite{jiequn2} which means that it is in general harder for the vanilla deep learning algorithms to produce control that converges to the true optimal control. 

Since all coefficients are multiples of the identity, the Riccati equation is found to take the following form: with \(P_t=p_tI\):
\[
\dot p_t = -q_t - 2 p_t + \frac{1}{2} p_t^2,
\qquad p_T = s_T.
\]
Solving the Riccati ODE gives: 
\begin{align}
    p_t&=\frac{p^1 -\gamma e^{2\sqrt{2}(t-1)}p^2}{1-\gamma e^{2\sqrt{2}(t-1)}}, &&p^1=2+2\sqrt{2}, \ p^2=2-2\sqrt{2}, \gamma=\frac{1-p^1}{1-p^2}. 
\end{align}
and the optimal control is the given by $u^*_t = -\frac{1}{2}P_t X_t.$
We solve the problem by using the proposed algorithm and benchmark it against the most effective approach based on deep learning proposed in \cite{jiequn1}. The learned controls are evaluated using
\[
\operatorname{MSE}_{u} =
\Delta t\sum_{n=0}^{N-1} \mathbb{E}\!\left[ \left| \widehat{u}_n(X_n^{*})-u_n^{*}(X_n^{*}) \right|^2 \right],
\]
where $u_n^{*}$ and $X_n^{*}$ denote the exact Riccati control and the corresponding optimally controlled state process. The test specifications are listed in Table \ref{tab:lq_base_configuration}. Importantly, for the global SMP--RaNN method, we used the third order polynomial encoding to create more input features.
Since, only one global RaNN is used for fitting the entire control, the number is hidden layers is intentionally taken to be large (320). In Figure \ref{fig:eg1_orginal_comparison}, comparison between the three methods show that SMP-RaNN based methods shows convergence within only 10 iterations and achieves very low MSE compared to the Direct approach. In the figure on the right, it again shows that the SMP-RaNN outputs Direct NN in terms of time efficiency in this example. In particular, the time-wise RaNN achieves low MSE in less than on second while the global RaNN in this example shows a smoother and milder convergence curve. 

\begin{table}[htbp]
\centering
\caption{Base configuration for the comparison of the timewise SMP--RaNN, global SMP--RaNN, and direct NN methods.}
\label{tab:lq_base_configuration}
\small
\renewcommand{\arraystretch}{1.15}
\setlength{\tabcolsep}{5pt}
\resizebox{\textwidth}{!}{
\begin{tabular}{lccc}
\toprule
\textbf{Parameter} & \textbf{Timewise SMP--RaNN} & \textbf{Global SMP--RaNN} & \textbf{Direct NN} \\
\midrule
Common setup & \multicolumn{3}{c}{$d=20$, $T=1$, $N=20$, $M=8000$, $M_{\mathrm{eval}}=4000$, $X_0\sim\mathcal{N}(0,0.5^2I_d)$} \\
LQ coefficients & \multicolumn{3}{c}{$a=b=\sigma=g=1$, $q=r=2$} \\
Control form & $\{u_n(x)\}_{n=0}^{N-1}$ & $u(t,x)$ & $\{u_n(x)\}_{n=0}^{N-1}$ \\
Number of controls & $20$ & $1$ & $20$ \\
Input dimension & $20$ & $1+(3+1)d=81$ & $20$ \\
Hidden structure & $256$ random features & $320$ random features & two layers $(128,128)$ \\
Architecture & $20$--$256$--$20$ & $81$--$320$--$20$ & $20$--$128$--$128$--$20$ \\
Activation & $\tanh$ & $\tanh$ & $\tanh$ \\
Training method & timewise ridge projection & pooled ridge projection & Adam backpropagation \\
Ridge parameter & $2\times10^{-3}$ & $2\times10^{-3}$ & -- \\
Step size & $0.4(k+1)^{-1/2}$ & $0.1(k+1)^{-1/2}$ & learning rate $8\times10^{-3}$ \\
Iterations & $100$ & $100$ & $2500$ \\
Time--state encoding (Global RaNN) & State $x$ & $\mathcal{E}_3(t,x)=(\widetilde{t},x,\widetilde{t}x,\widetilde{t}^{\,2}x,\widetilde{t}^{\,3}x)\in\mathbb{R}^{81}$, $\widetilde{t}=2t/T-1$ & State $x$ \\
\bottomrule
\end{tabular}}
\end{table}

\begin{figure}[h]
    \centering 
    \includegraphics[width=1.0\textwidth]{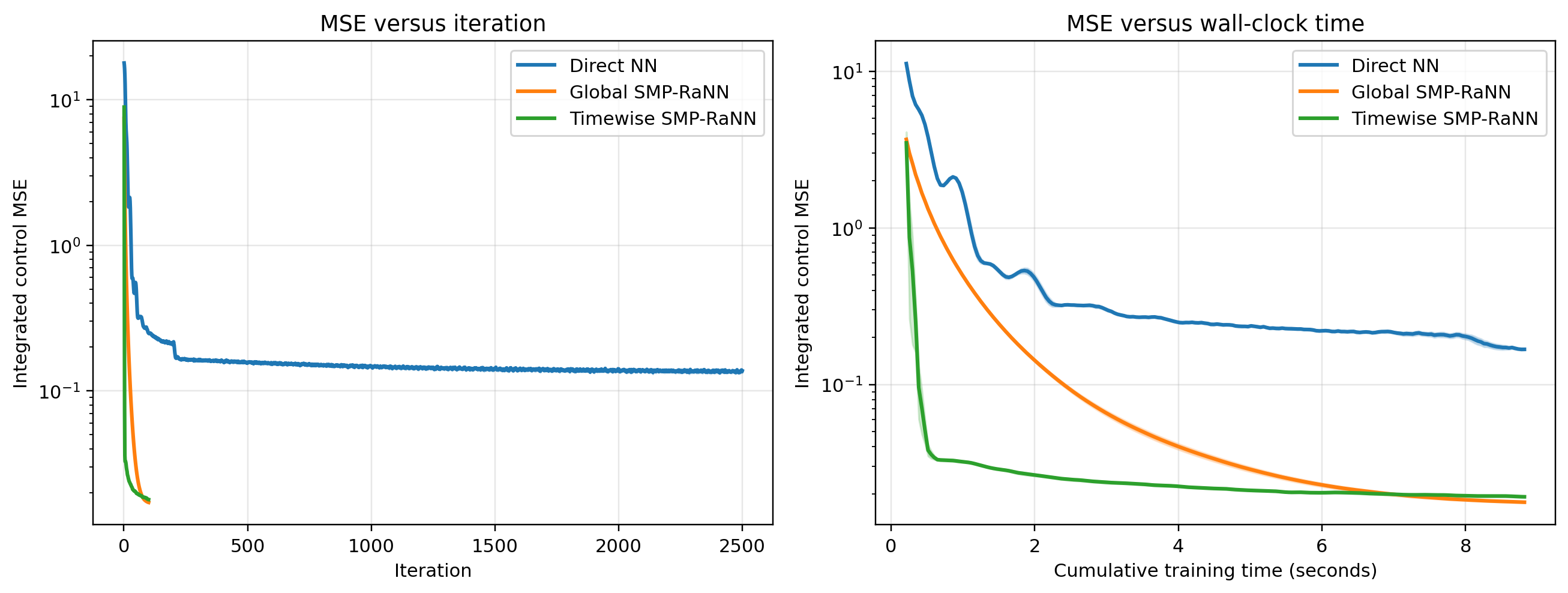} 
    \caption{Comparing among the time-wise SMP-RaNN method, the global SMP-RaNN and the direct NN method. Left: Control MSE over number of training iterations. Right: Error decay of MSE over wall-clock training time.}
    \label{fig:eg1_orginal_comparison} 
\end{figure}

\subsubsection*{Stability and Sensitivity test for example 1}
In this section, we perform ablation test based on choosing different feature size (hidden dimension), the training path and the number of the temporal discretizations. The impact of those choices on the final control RMSE is presented and analyzed in Figure~\ref{fig:eg1_ablation}. The test specifications is kept the same as those in Table~\ref{tab:lq_base_configuration} while only changing one parameter at a time. The test specifications are detailed in Figure~\ref{fig:eg1_ablation}.

The sample-size experiment shows that the final control error decreases as the number of training paths ($M$) increases, with the global SMP--RaNN attaining a consistently smaller mean error. Increasing the number of time steps also substantially reduces the final error for both methods. In the hidden-basis experiment, the global method improves markedly as the hidden dimension increases, whereas the timewise method becomes numerically unstable at $H=512$; the corresponding statistics retain all completed unstable runs and therefore reflect the ill-conditioning of the regression system.

\begin{figure}[h]
    \centering 
    \includegraphics[width=1.0\textwidth]{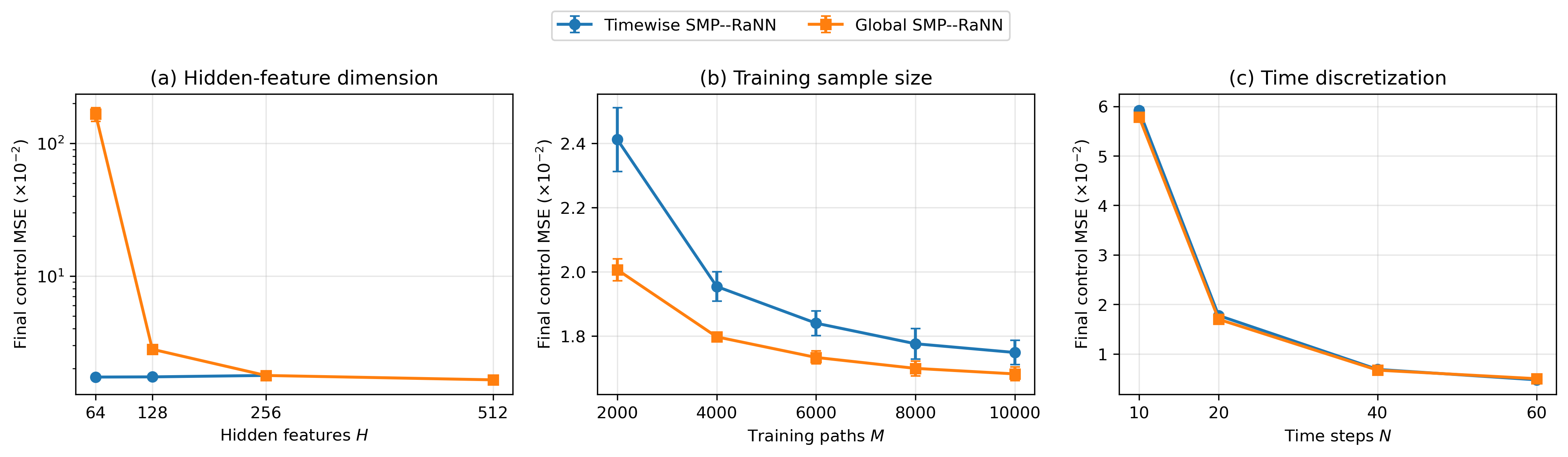} 
    \caption{Final integrated control MSE after 100 iterations under three parameter sweeps; Error bars show the sample standard deviation. (a) Comparison with respect to the number $H$ of randomized hidden features, using a logarithmic vertical axis because of the comparatively large global-RaNN error at $H=64$. The case $H=512$ is omitted for the timewise method because it becomes numerically unstable, producing an MSE of $(2.67\pm5.10)\times10^{19}$ from the five completed runs. (b) Comparison with respect to the number $M$ of training paths. (c) Comparison with respect to the number $N$ of time-discretization steps.}
    \label{fig:eg1_ablation} 
\end{figure}

Meanwhile, we examine the condition numbers of the Gram matrix $\Phi^\top\Phi$ (raw) and the regularized Gram matrix $\Phi^\top\Phi+\lambda I$ (ridge) arising in the RaNN projection; see Figure~\ref{fig:eg1_condition_number}. In both subfigures, the raw Gram matrices associated with the timewise SMP--RaNN are numerically singular, and their condition numbers are therefore reported as $+\infty$ and they are capped by finite numbers for plotting. This highlights the importance of ridge regularization in stabilizing the regression systems. By contrast, the global SMP--RaNN exhibits consistently smaller condition numbers, suggesting that the time--state formulation leads to a better-conditioned feature system.
\begin{figure}[h]
    \centering 
    \includegraphics[width=0.95\textwidth]{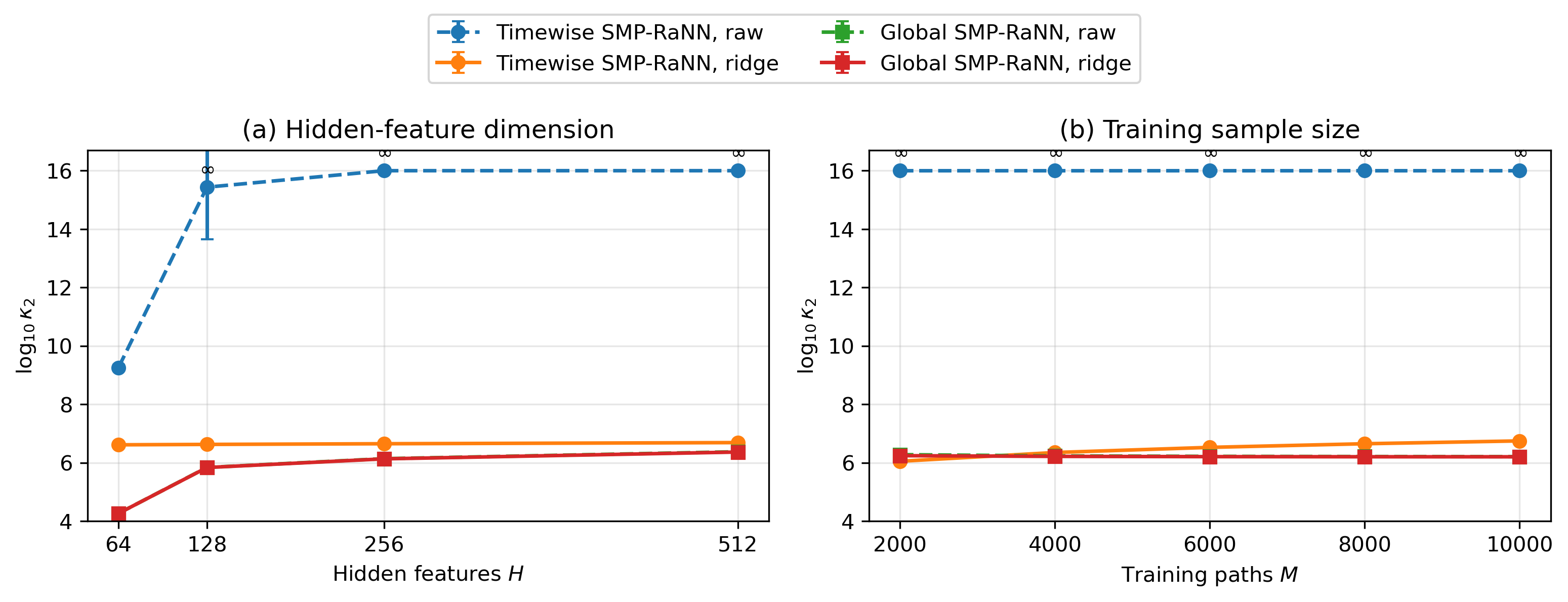} 
    \caption{Comparison of condition number between the four different gram matrices post 100 iterations training. Condition numbers are in log scale due to their large values. (a) Condition number versus number of hidden features $H$. (b) Condition number versus the size of training paths $M$.}
    \label{fig:eg1_condition_number} 
\end{figure}
\subsection{Example 2: Mean field SOC.}
\subsubsection*{Mean-variance portfolio optimization}
This example is related to a problem with controlled diffusion and it is a standard mean-variance portfolio investing problem: 
\begin{align}
    J(\alpha)=\frac{\eta}{2}\E[X^2_T]-\frac{\eta}{2}(\E[X_T])^2 -\E[X_T] && \text{mean-variance loss}
\end{align}
where $X_t$ follows the following dynamics: 
\begin{align}
    dX_t=(r X_t + \rho u_t  ) dt + \theta u_t  dB_t, && X_0 \sim \mu_0.
\end{align}
As such, the Hamiltonian takes the following form:  
\[
H=(rx+\rho u)Y+\theta u Z 
\]
The following adjoint process is then consructed accordingly: 
\begin{align}
    dY_t=-rY_t dt + Z_t dW_t, && Y_T=\eta X_T-\eta \Tilde{\E}[\tilde{X}_T]-1.
\end{align}
Then we also have: 
\begin{align}
    J'_u=\nabla_u H = \rho Y + \theta Z
\end{align}
in which case the diffusion is also controlled, making it a more challenging problem numerically.

For this example, to benchmark the result in \cite{pham2} we take $T=1$, $x_0=0.1$, $r=0$, $\rho=0.1$, $\theta=0.4$ and $\eta=1.0$.
The exact control function is given by: 
\begin{align}
  u^*(X_t,\mathbb{P}_{X_t}) = -\frac{\rho}{\theta^2}\Big(X_t - \E[X_t] -\frac{1}{\eta} \exp\left((\frac{\rho^2}{\theta^2}-r) (T-t)\right)\Big).
\end{align}
We perform our test for $10$ individual runs and compare the performance (control RMSE) of Algorithm \ref{algorithm_main2} and the DirectNN method proposed in \cite{Carmona4}. The control RMSE is defined as: 
\begin{align}
\mathrm{RMSE} := \left(\frac{1}{N M}\sum_{n=0}^{N-1} \sum_{i=1}^{M} \left|\widehat{u}_n\left(X_n^{*,i}\right) -u_n^*\left(X_n^{*,i}\right) \right|^2 \right)^{1/2}. 
\label{eq:control_rmse}
\end{align}
where $\widehat{u}$ denotes the learned control, $u^*$ the optimal control, and $X^*_n$ the state corresponding to the optimal control. For control approximation in Algorithm \ref{algorithm_main2}, we again adopt two approaches, namely approximating each $u_t(\cdot)$ individually by a RaNN and approximating $u(\cdot)$ globally with $(t,x)$ as arguments but with feature encoding.  
\begin{figure}[h]
    \centering 
    \includegraphics[width=0.9\textwidth]{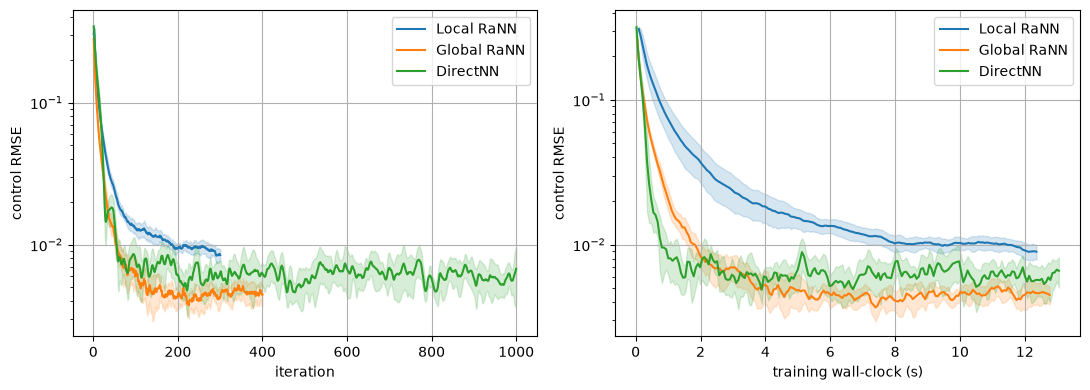} 
    \caption{Comparison of RMSE between the local RaNN, Global RaNN and DirectNN method for the mean-variance portfolio control problem.Left: RMSE with respect to number of iterations. Right: RMSE with respect to wall-clock time.}
    \label{fig:mean_field_res1} 
\end{figure}
\begin{figure}[h]
    \centering 
    \includegraphics[width=0.9\textwidth]{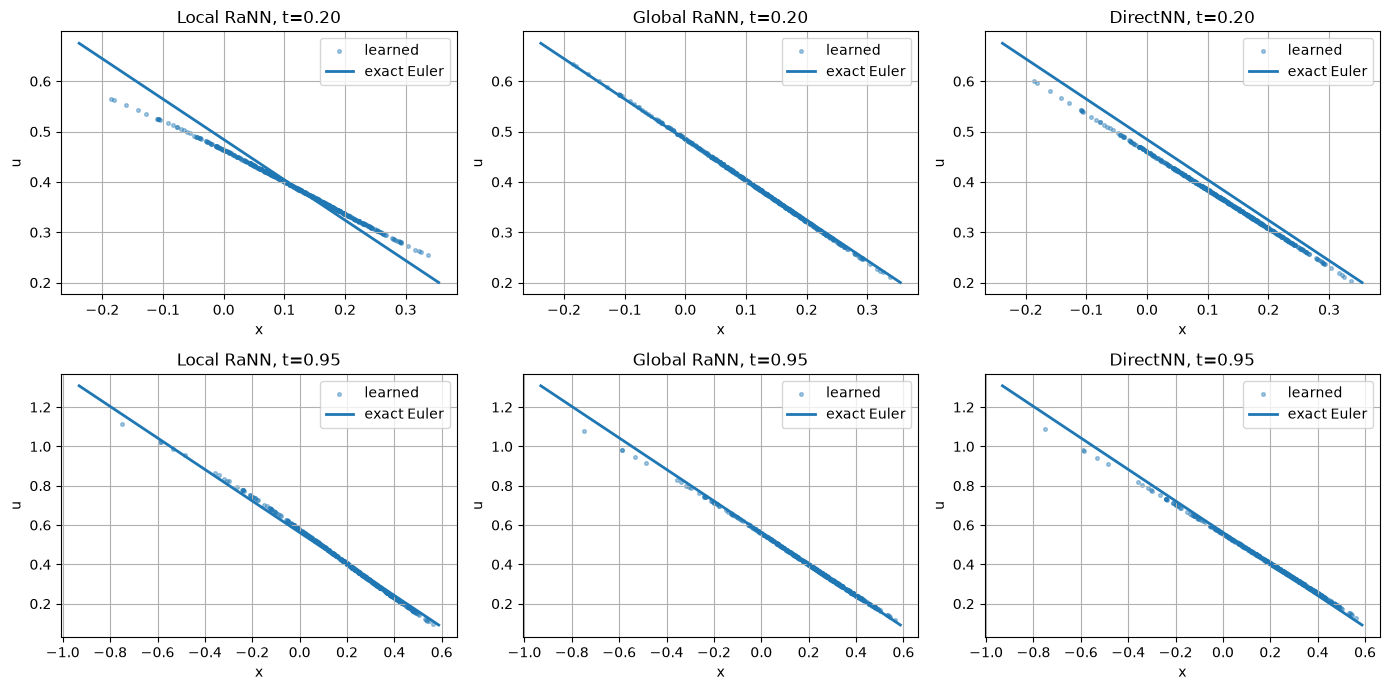} 
    \caption{Comparison between the local RaNN, Global RaNN and DirectNN method for the mean-variance portfolio control problem. }
    \label{fig:mean_field_res2} 
\end{figure}
It is observed from Figure \ref{fig:mean_field_res1} that compared to the standard DirectNN, the local RaNN approach is less time-efficient and within the same wall-clock time, it does not achieve better accuracy than benchmark. the Global RaNN on the other hand, achieves better accuracy in less iterations within comparable wall-clock time. One reason behind this phenomenon is that the control is usually continuous in $(t,x)$ variable and using one large RaNN to approximate the control is both cheaper and more effective. This observation is also confirmed in Figure \ref{fig:mean_field_res2}. At time $t=0.2$, the control learned based on the global RaNN approach shows highest accuracy. 

To further test the solution, we also compute the value function to related to this MFC problem, which is also given in \cite{pham2},
\begin{align}
    v(t,\mu) &= \frac{\eta}{2}e^{\left(2r-\frac{\rho^2}{\theta^2}\right)(T-t)}
\text{Var}_{\mu}(X) -e^{r(T-t)}\E_{\mu}[X] -\frac{1}{2\eta}
\left( e^{\frac{\rho^2}{\theta^2}(T-t)}-1 \right).
\end{align}
Define $\kappa=\frac{\rho^2}{\theta^2}$,At $t=0$, $v(\mu_0)=\frac{\eta}{2}e^{(2r-\kappa)T}\text{Var}(X_0)-e^{rT}\E[X_0]-\frac{e^{\kappa T}-1}{2\eta}.$

To further ensure the reliability and accuracy of our method, we test the algorithm over six different initial distributions with the following model parameters $T=1,r=0.2,\rho=0.2, \theta=0.5, \eta=2$:
\begin{enumerate}
    \item $X_0\sim \mcal{N}(0.1,0.04)$.
    \item $X_0\sim \mcal{N}(0.2,0.025^2)$.
    \item $X_0\sim \mcal{N}(0.3,0.025^2)$.
    \item $X_0=0.5(-\frac{\sqrt{3}}{10}+0.1+0.1\xi_1)+0.5(-\frac{\sqrt{3}}{10}+0.1+0.1\xi_2)$, \quad $\xi_1, \xi_2 \sim \textbf{Uni}(0,2\sqrt{3})$, i.i.d.
    \item $X_0=0.5(-0.1+0.05+0.1\xi_1)+0.5(-0.1+0.05+0.1\xi_2)$, \quad  $\xi_1, \xi_2 \sim \textbf{Exp}(1)$, i.i.d.  
    \item $X_0=a+[-\mathbf{1}_{5U<2}k + \mathbf{1}_{5U>3} k] + \theta Y$ with $U \sim U(0,1), a=0.2, k=0.3, \theta=0.07, Y \sim\mcal{N}(0,1).$
\end{enumerate}
In Table \ref{tab:mean_variance_value_comparison}, we compare the computed value function evaluated at $t=0$ based on our learned control and the analytic value for the six different distribution set up. To benchmark against the result under the discretization error and MC error, we also compute the value function via Euler MC under the analytic control function. For all the tests below, we used the number of temporal discretization $N=80$ with width (number of randomized hidden features/basis) equal to $128$. It shows that the computed value function is very close to that of the analytic result.  

\begin{table}[htbp]
\centering
\caption{Comparison of the exact value, Global RaNN approximation, and Euler Monte Carlo approximation over 10 independent runs. Results are reported as mean $\pm$ standard error.}
\label{tab:mean_variance_value_comparison}
\begin{tabular}{lccc}
\hline
Initial law & Exact value & Global RaNN & Euler MC \\
\hline
$\mathcal{N}(0.1,0.04)$
& $-0.115$
& $-0.114 \pm 1.40\times 10^{-3}$
& $-0.116 \pm 1.44\times 10^{-3}$ \\

$\mathcal{N}(0.2,0.025^2)$
& $-0.287$
& $-0.287 \pm 5.77\times 10^{-4}$
& $-0.287 \pm 5.46\times 10^{-4}$ \\

$\mathcal{N}(0.3,0.025^2)$
& $-0.409$
& $-0.408 \pm 5.80\times 10^{-4}$
& $-0.408 \pm 9.16\times 10^{-4}$ \\

Averaged uniform law
& $-0.159$
& $-0.158 \pm 7.58\times 10^{-4}$
& $-0.160 \pm 6.51\times 10^{-4}$ \\

Averaged exponential law
& $-0.098$
& $-0.099 \pm 7.22\times 10^{-4}$
& $-0.099 \pm 7.36\times 10^{-4}$ \\

Three-component Gaussian mixture
& $-0.190$
& $-0.190 \pm 1.67\times 10^{-3}$
& $-0.193 \pm 1.45\times 10^{-3}$ \\
\hline
\end{tabular}
\end{table}

\subsubsection*{Optimal liquidity with price-impact}
In this example, we study a price impact model where the interactions take place via the controls \cite{carmona2015probabilistic,carmona2018probabilistic1}. 
We denote an inifinitesimal trader's inventory at time $t$ by $X_t$ which is assumed to evolve under the following SDE: 
$$dX_t = \alpha_t dt + \sigma dW_t, \quad X_0 \sim \mu_0.$$
Denoting by $\nu_t^{\alpha}$ the law of the control at time $t$, the cost is given by
\begin{align}
    J(\alpha) = \mathbb{E}\left[\int_{0}^{T} \underbrace{\left( \frac{c_{\alpha}}{2} \alpha_{t}^{2} + \frac{c_{X}}{2} X_{t}^{2} - \gamma X_{t} \int_{\mathbb{R}} a \nu_{t}^{\alpha}(a) \,da \right)}_{f(X_t,\alpha_t, \mcal{L}_{(X_t, \alpha_t)})}  \, dt + \frac{c_{g}}{2}X_{T}^{2}\right] 
\end{align}
where $\gamma, c_{\alpha}, c_{X}, c_{g}$ are constants. Following the derivation of \cite{Carmona5}, we construct the following Hamiltonian: 
\begin{align}
    H(t,X_t,Y_t,Z_t, \alpha_t, \mcal{L}_{X_t, \alpha_t})= \alpha_t Y_t + \sigma Z_t + f(X_t, \alpha_t,  \mcal{L}_{(X_t, \alpha_t)})
\end{align}
where the adjoint process $(Y_t, Z_t)$ takes the form (note that $H$ does not depend explicitly on the distribution of $X_t$):
\begin{align}
    dY_t = - (c_X X_t -\gamma \E[\alpha_t])dt+ Z_t dW_t, \quad Y_T = c_g X_T.
\end{align}
Importantly, since the running cost also depends on the distribution of the control, the `generalized gradient' takes the form: 
\begin{align}
    J'_u&=\nabla_u H + \tilde{\E}[\partial_\nu H(\tilde{X_t}, \tilde{Y}_t,\tilde{Z}_t, \tilde{\alpha}_t, \mcal{L}_{(\tilde{X}_t, \tilde{\alpha}_t)})](X_t, \alpha_t) \nonumber\\ 
    &= Y_t + c_\alpha \alpha_t - \gamma \tilde{\E}[\tilde{X}_t].
\end{align}
In this example, we take $(c_\alpha, c_X, c_g, \gamma, \sigma, T)=(1,2,0.3,1,0.5, 1)$, and the initial distribution $\mu_0 \sim \mcal{N}(5,0.3)$. The three model specifications are also summarized in Table \ref{tab:price_impact}.
\begin{table}[htbp]
\centering
\caption{Hyperparameter specification for the three methods.}
\label{tab:price_impact}
\begin{tabular}{lccc}
\toprule
 & Local RaNN & Global RaNN & Direct NN \\
\midrule
Number of iterations 
    & $200$ & $200$ & $1500$ \\
Hidden width 
    & $128$ & $128$ & $128$ \\
Number of networks 
    & $N=50$ & $1$ & $1$ \\
Initial learning rate 
    & $0.6$ & $0.6$ & $0.1$ \\
Learning-rate decay 
    & $(k+1)^{-0.5}$ & $(k+1)^{-0.6}$ & $(k+1)^{-0.3}$ \\
Optimizer/update 
    & Gradient + ridge projection 
    & Gradient + ridge projection 
    & Adam \\
Ridge parameter 
    & $10^{-5}$ & $10^{-5}$ & -- \\
Activation 
    & $\tanh$ & $\tanh$ & $\tanh$ \\
\bottomrule
\end{tabular}
\end{table}

The optimal control is given by
\begin{align}
u_t^*= -\frac{P_t}{c_\alpha}\left(X_t-m_t\right) -\frac{S_t}{c_\alpha}m_t,
    \qquad m_t:=\E[X_t],
\end{align}
where $P_t$ and $S_t$ solve the Riccati equations
\begin{align}
\dot P_t&=\frac{P_t^2}{c_\alpha}-c_X, \qquad P_T=c_g,\\
\dot S_t&=\frac{S_t^2}{c_\alpha}-c_X, \qquad S_T=c_g-\gamma.
\end{align}
And the mean state $m_t$ satisfies
\begin{align}
\dot m_t =-\frac{S_t}{c_\alpha}m_t, \qquad m_0=\int x\,\mu_0(dx).
\end{align}
It is observed from Figure \ref{fig:price_impact} that both the local RaNN and the global RaNN method reaches high accuracy within only 200 iterations. According to Table \ref{tab:price_impact}, under the similar test setup, our algorithm based on SMP also achieves higher accuracy with less wall-clock time, demonstrating its efficiency specifically for this example.  
\begin{figure}[h]
    \centering 
    \includegraphics[width=0.9\textwidth]{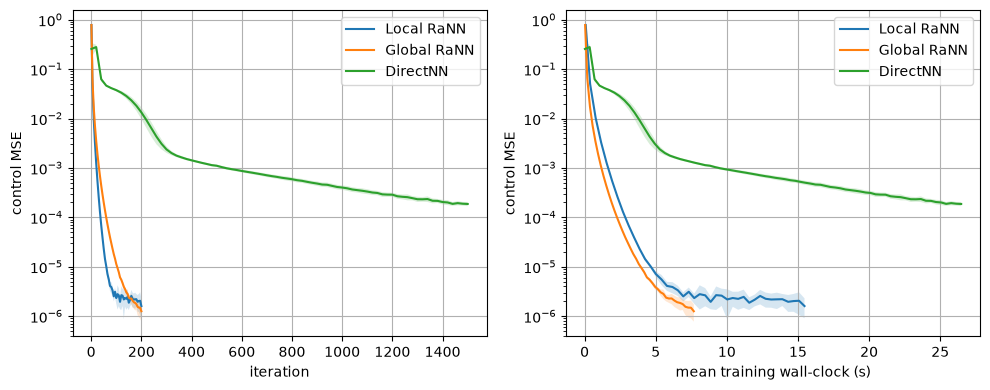} 
    \caption{The optimal liquidity model with price-impact: Comparison of control RMSE between the local RaNN, Global RaNN and DirectNN method for the optimal liquidity with price-impact problem. Left: RMSE with respect to number of iterations. Right: RMSE with respect to wall-clock time.}
    \label{fig:price_impact} 
\end{figure}

\subsection{Example 3: A fine tuning example (MFC)}
In this example, we perform the fine tuning task. The goal is to transport the base distribution
\begin{align}
    p_{\text{base}} = \frac{1}{2} \phi(x; m_1, \sigma^2_1) + \frac{1}{2} \phi(x; m_2, \sigma^2_2)
\end{align}
towards a target distribution 
\begin{align}
    p_{\text{targe}} \propto \exp(r(x))p_{\text{base}}
\end{align}
where in the setup above $r(x)=\alpha x$, $m_1=-2, m_2=2$ and $\sigma_1=0.55$ and $\sigma_2=0.55$ and $\phi(x;m,\sigma)= \frac{1}{\sqrt{2 \pi \sigma^2}}\exp(-\frac{1}{2\sigma^2}(x-m)^2)$. In this example, we take $r(x)=-0.42x$. 

According to the Langevin dynamics, the distribution of the terminal state $X_T$ for the following SDE follows the base distribution:
\begin{align}
   dX_t = b(t,X_t) dt + \sigma dW_t, X_0 \sim p_{\text{base}} 
\end{align}
with $b(t,x)=\frac{\sigma^2}{2} \partial_x \log p_{\text{base}}(x)$. And in this example, $b(x)=\frac{\sigma^2}{2a^2}\left( 2 \tanh(\frac{2x}{a^2})-x\right)$, $a=0.55$. In our fine tuning setup, the goal is to minimize the following: 
\begin{align}
    J(u)= \E [\frac{1}{2}\int^1_0 u^2(t,X_t) dt] + \lambda_g \frac{1}{2} D(\mu_T, \nu). 
\end{align}
over the admissible controls and the underlying process is defined as 
\begin{align}
   dX_t = \left( b(t,X_t) + \sigma u_t \right)dt + \sigma dW_t, \quad X_0 \sim p_{\text{base}}. 
\end{align}
In this toy example, we simply assume that the base distribution is known and so that the score of $p_{\rm base}$ can be computed analytically. When $p_{\text{base}}$ is only accessible in the form of data, such quantity can be learned by the well-established score-matching methods. 
Note that if we take the terminal loss to be KL-divergence, it can be written equivalently as
\begin{align}
D(\mu_T, \nu) &=\int_{\mathbb{R}^d}\log\left(\frac{\rho_T(x)}{\nu(x)}\right)\rho_T(x)\,dx \\
&=\text{KL}(\mu_T\,\Vert\,\rho_{\rm base}) +\E[V(X_T)]+\log Z,
\end{align}
and $Z=\int_{\mathbb{R}^d}e^{-V(x)}\rho_{\rm base}(x)\,dx$ with $V(x)=-r(x)$ which is a constant. 
Consequently, the Lions derivative appearing in the terminal adjoint is
\begin{align}
    D_\mu D(\mu_T, \nu)(x)=\nabla\log\rho_T(x)-\nabla\log\rho_{\rm base}(x)+\nabla V(x).
\end{align}
Numerically, we estimate it from an independent terminal particle cloud $\{X_T^i\}_{i=1}^M$ using a Gaussian kernel density estimator
\begin{align}
\widehat{\rho}_T(x)= \frac{1}{M}\sum_{i=1}^M K_h(x-X_T^i),
\qquad K_h(z)= \frac{1}{(2\pi h^2)^{d/2}} \exp\left(-\frac{|z|^2}{2h^2}\right).
\end{align}
Its score is computed directly by
\begin{align}
\nabla\log\widehat{\rho}_T(x) &=-\frac{1}{h^2}\frac{\sum_{i=1}^M (x-X_T^i)K_h(x-X_T^i)}{\sum_{i=1}^M K_h(x-X_T^i)}.
\end{align}
 
We first test our training result by changing the diffusion $\sigma$. In \ref{fig:fine_tune1D_diffusion}, the first row shows the generated result using our previous descent algorithm while the second row is obtained via the adjoint matching algorithm. It is noted that both algorithm completes the fine-tuning task well with the descent algorithm achieving slightly lower $W_2$ in this case under both choices of $\sigma$. In training, we initialize the control using a short Gibbs-informed warm start constructed solely from the base distribution and the potential $V$, with importance weights proportional to $e^{-V}$.

\begin{figure}[h]
    \centering 
    \includegraphics[width=0.8\textwidth]{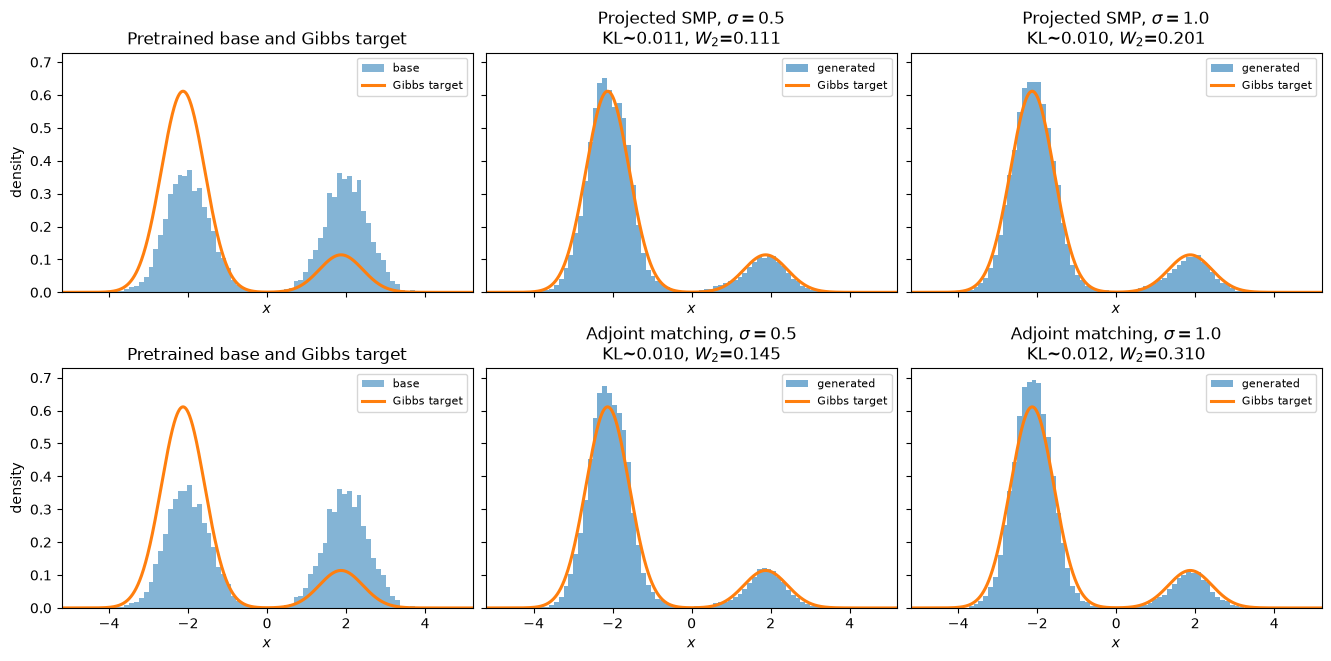} 
    \caption{Fine-tuning the drift of base SDE based on the descent and MFC adjoint-matching algorithm with $\lambda_g=200$ under different diffusion setup.}
    \label{fig:fine_tune1D_diffusion} 
\end{figure}
To further study the impact of different choice of the penalty factor, we perform study over $\lambda_g=20,80,160,320,1280$. The experiment is repeated five individual times. The mean terminal $\rm KL/W_2$ is shown with respect to $\lambda_g$ in \ref{fig:fine_tune1D_lambda} with error bar plotted. Also, in Figure \ref{fig:fine_tune1D_lambda_distribution}, the different final distributions are demonstrated for each $\lambda_g$. From those, we see while $KL$ divergence shows mild fluctuation in its mean, $W_2$ changes more noticeably. This is because $\rm KL$ is sensitive to weight mismatch whereas $W_2$ is sensitive to where this mismatch is located. Thus, it is possible that a small KL value could have larger $W_2$ loss, hence using only $\rm KL$ may not fully preserve the geometry in the distribution. From our test, it is shown that it is more reliable to keep $\lambda_g$ relative small for this problem: large $\lambda_g$ could lead to stability issue by amplifying the approximation noise. 
\begin{figure}[h]
    \centering 
    \includegraphics[width=0.8\textwidth]{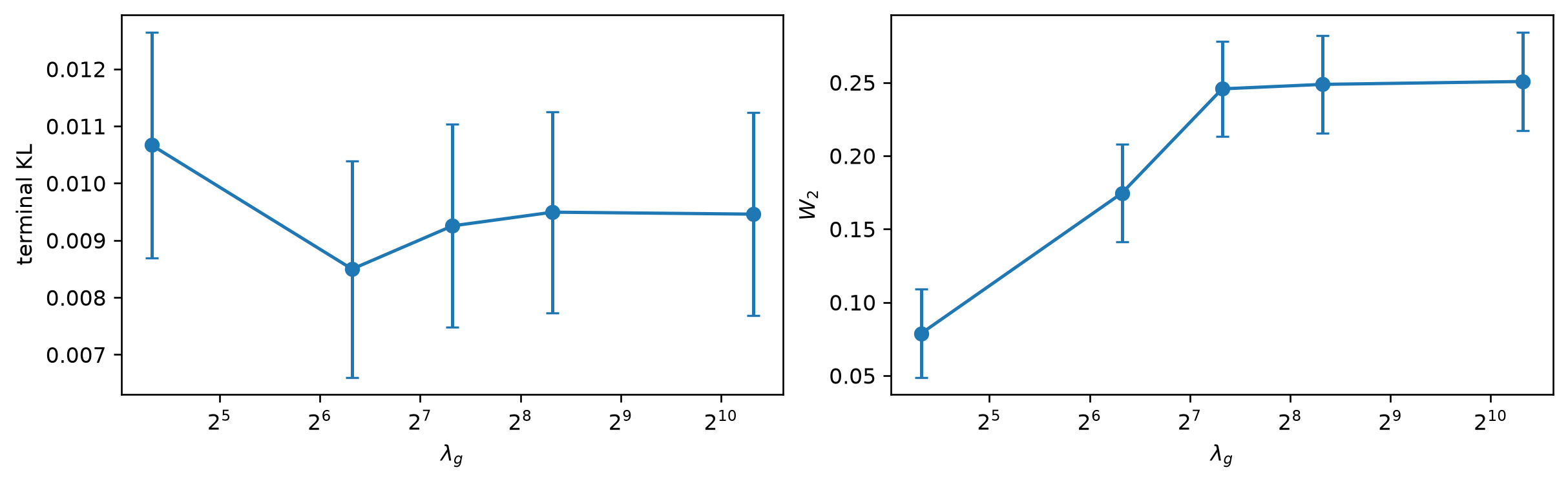} 
    \caption{The change of $KL/W_2$ with respect to the penalty factor $\lambda_g$. }
    \label{fig:fine_tune1D_lambda} 
\end{figure}
\begin{figure}[h]
    \centering 
    \includegraphics[width=1.0\textwidth]{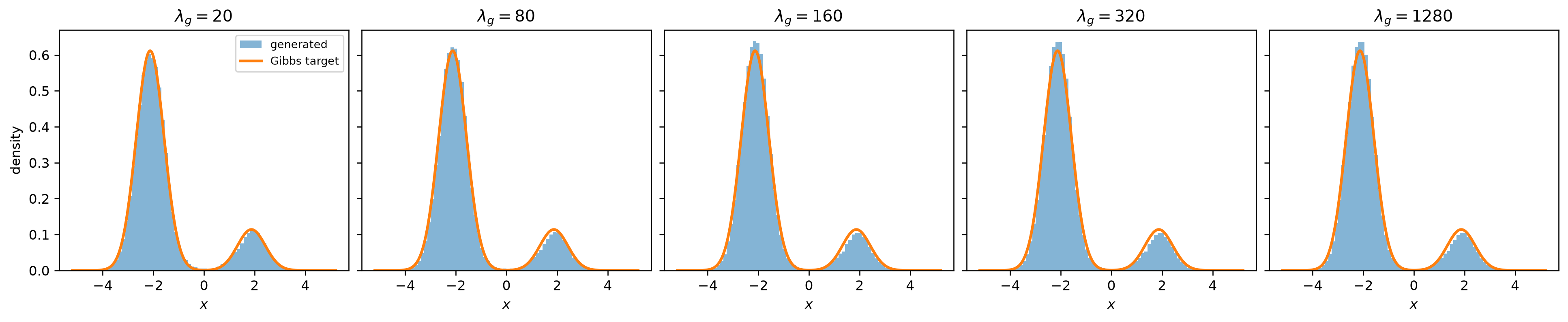} 
    \caption{Comparison between generated empirical distributions under different penalty factors $\lambda_g=20,80,160,320,1280$.}
    \label{fig:fine_tune1D_lambda_distribution} 
\end{figure}

\subsection{Example 4:2-D endpoint law transport with path supervision.}
In this example, we consider the two-dimensional Gaussian marginal flow studied in \cite{pham5}, whose time-dependent mean follows a parabolic trajectory while the covariance matrix remains constant:
\begin{align}
    m_t=(-2.25+4.5t,\,2.75\times4t(1-t)),\qquad \rho_t=\mcal{N}\left(m_t,\operatorname{diag}(0.35^2,0.12^2)\right).
\end{align}
We set $\lambda_g=140$ and $\lambda_f=1400$, and use $N=100$ time steps. Under this formulation, the terminal distribution is penalized against the terminal target, while the intermediate marginals are additionally supervised along the prescribed distributional path. Due to the relatively fine temporal discretization, we employ a global RaNN that takes both time and state as inputs, with Fourier encoding applied to the inputs. The model is trained for $600$ outer iterations, with $5$ inner fitting steps performed at each iteration. We take the diffusion coefficient to be $\sigma=0.15$ and use a batch size of $M=3000$ particles in each outer iteration, resulting in $3000\times N$ $(\text{time},\text{state})$ training pairs. In each inner fitting step, $8192$ samples are used for regression.

The results are reported in Table \ref{tab:mfam_w2_comparison}. As shown by the first column, incorporating intermediate marginal supervision does not necessarily improve the accuracy of the terminal distribution matching; in this example, the terminal-only formulation achieves a smaller terminal $W_2$ error. In contrast, the remaining metrics in Table \ref{tab:mfam_w2_comparison}, together with the bottom row of Figure \ref{fig:2D_traj}, show that path supervision substantially improves the agreement between the generated particle distribution and the prescribed intermediate marginals. This demonstrates that intermediate marginal supervision effectively controls the entire distributional trajectory rather than only its terminal endpoint, albeit with some loss of terminal matching accuracy in this example.
\begin{figure}[h]
    \centering 
    \includegraphics[width=1.0\textwidth]{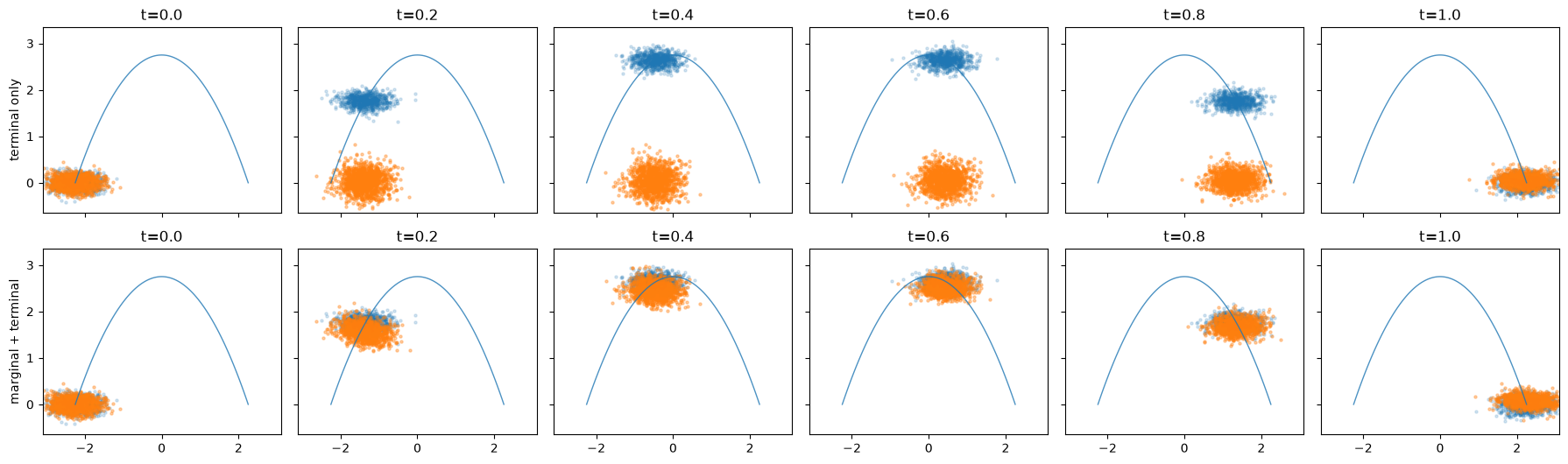} 
    \caption{MFC control with path supervision, a 2-D Gaussian example with given time deterministic trajectory for mean. Top row: the generated Gaussian random variables without path supervision. Bottom row: the generated Gaussian samples with constraints over path.}
    \label{fig:2D_traj} 
\end{figure}
\begin{table}[htbp]
\centering
\caption{Wasserstein distance comparison of the two MFAM approaches. 
Results are reported as mean $\pm$ standard error over 10 independent runs. The $W_2$ is calculated also over $t=0.1,0.2,...,0.9$ total 9 observation times.}
\label{tab:mfam_w2_comparison}
\begin{tabular}{lcccc}
\toprule
Model 
& Terminal $W_2$
& Max intermediate $W_2$
& Mean path $W_2$
& Mean observed $W_2$ \\
\midrule
Terminal-only MFAM
& $0.030 \pm 0.004$
& $2.750 \pm 0.006$
& $1.832 \pm 0.005$
& $2.015 \pm 0.005$ \\

Marginally supervised MFAM
& $0.129 \pm 0.003$
& $0.193 \pm 0.004$
& $0.124 \pm 0.003$
& $0.127 \pm 0.003$ \\
\bottomrule
\end{tabular}
\end{table}

\subsection{Example 5: Smile transport using celebrity data}
In this section, we consider an unpaired facial-expression transport task in which the distribution of non-smiling faces is transported toward that of smiling faces. Both non-smiling and smiling images are first encoded into the latent space of a frozen variational autoencoder (VAE), and the proposed algorithm learns a time-dependent feedback field $u(t,x)$ through the controlled latent dynamics
\begin{align}
    dX_t=u(t,X_t)dt+\sigma dW_t.
\end{align}
The purpose of this experiment is primarily to demonstrate the applicability and numerical feasibility of Algorithm \ref{algorithm_main2} in a high-dimensional, real-world generative setting.

We use the pretrained VAE checkpoint \texttt{stabilityai/sd-vae-ft-mse}, a Stable Diffusion VAE available through Hugging Face \texttt{diffusers}, for image encoding and decoding. CelebA images are center-cropped to $160\times160$ pixels and resized to $128\times128$. Before being passed to the VAE, the images are rescaled to $[-1,1]$. Each image is encoded using the posterior mean of the VAE encoder. The resulting latent representation has dimension $4\times16\times16$ and is flattened into a $1024$-dimensional vector.

We construct training cohorts containing $10{,}000$ smiling and $10{,}000$ non-smiling faces, together with additional cohorts of $2{,}000$ smiling and $2{,}000$ non-smiling faces. The two $10{,}000$-sample training cohorts are pooled to fit a coordinatewise standardization transformation, which is subsequently applied to all latent representations. They are also used to train an auxiliary latent-space smile classifier for diagnostic purposes. For the proof-of-concept MFC experiment reported here, the empirical target distribution used in the Sinkhorn terminal loss is formed from the $2{,}000$ smiling latent samples, while source particles are sampled from the $10{,}000$ non-smiling training cohort. Accordingly, the distributional metrics reported below measure how accurately the controlled flow matches this prescribed empirical target distribution; they are intended as diagnostics of the transport procedure rather than as measures of out-of-sample generalization.

The control objective is
\begin{align}
    J(u)=\frac12\E\left[\int_0^T|u(t,X_t)|^2dt+\gamma|X_T-X_0|^2\right]+\lambda_gD_{\rm Sink}(\mu_T,\rho_{\rm target}),
\end{align}
where the source-preservation term $\frac{\gamma}{2}\E[|X_T-X_0|^2]$ is introduced to discourage excessive displacement from the initial latent representation and thereby preserve image characteristics unrelated to the desired expression change. This path-dependent terminal term can be incorporated into the preceding framework by augmenting the state with the static component $X_0$. We take $\gamma=0.10$ and $\lambda_g=900$.

For the generated and target empirical measures $  \widehat\mu_T:=\frac1M\sum_{i=1}^M\delta_{x_i},
    \widehat\rho_{\rm target}:=\frac1H\sum_{j=1}^H\delta_{y_j},$
the terminal distributional discrepancy is measured using the Sinkhorn divergence
\begin{align}
    S_\epsilon(\widehat\mu_T,\widehat\rho_{\rm target})
    :=\operatorname{OT}_\epsilon(\widehat\mu_T,\widehat\rho_{\rm target})
    -\frac12\operatorname{OT}_\epsilon(\widehat\mu_T,\widehat\mu_T)
    -\frac12\operatorname{OT}_\epsilon(\widehat\rho_{\rm target},\widehat\rho_{\rm target}).
\end{align}
The scalar Sinkhorn loss is differentiated with respect to each generated terminal particle using automatic differentiation. Under the squared-distance convention used in the implementation, the empirical first-variation field is approximated by
\begin{align}
    h_{\rm Sink}^i=2M\nabla_{x_i}S_\epsilon(\widehat\mu_T,\widehat\rho_{\rm target}), \label{eq:sinkhorn_force}
\end{align}
and the terminal adjoint contribution is therefore
\begin{align}
    Y_N^i=\lambda_gh_{\rm Sink}^i+\gamma(X_T^i-X_0^i).
\end{align}

We use $N=80$ time steps with diffusion coefficient $\sigma=0.15$ and $M=256$ generated particles in each outer iteration. At each iteration, the Sinkhorn force \eqref{eq:sinkhorn_force} is averaged over four independently sampled target batches of size $H=512$. The feedback control is trained for $1200$ outer iterations using the projected functional-descent update with base step size $0.015$ and an exponentially decaying trust-region radius.

For this proof-of-concept experiment, the sliced $W_2$ distance between the generated terminal distribution and the prescribed smiling target distribution decreases from $0.216$ before transport to $0.0969$ after training, corresponding to a reduction of approximately $55.1\%$. As shown in Figure \ref{fig:smile_drift_only}, the transported images acquire a visibly smiling expression while largely preserving other characteristics of the source image, including the background, hairstyle, clothing, and overall facial appearance. These results provide evidence that the proposed sample-based mean-field control procedure can be implemented effectively in a $1024$-dimensional latent space and can produce meaningful semantic transport on real image data.

\begin{figure}[h]
    \centering 
    \includegraphics[width=1.0\textwidth]{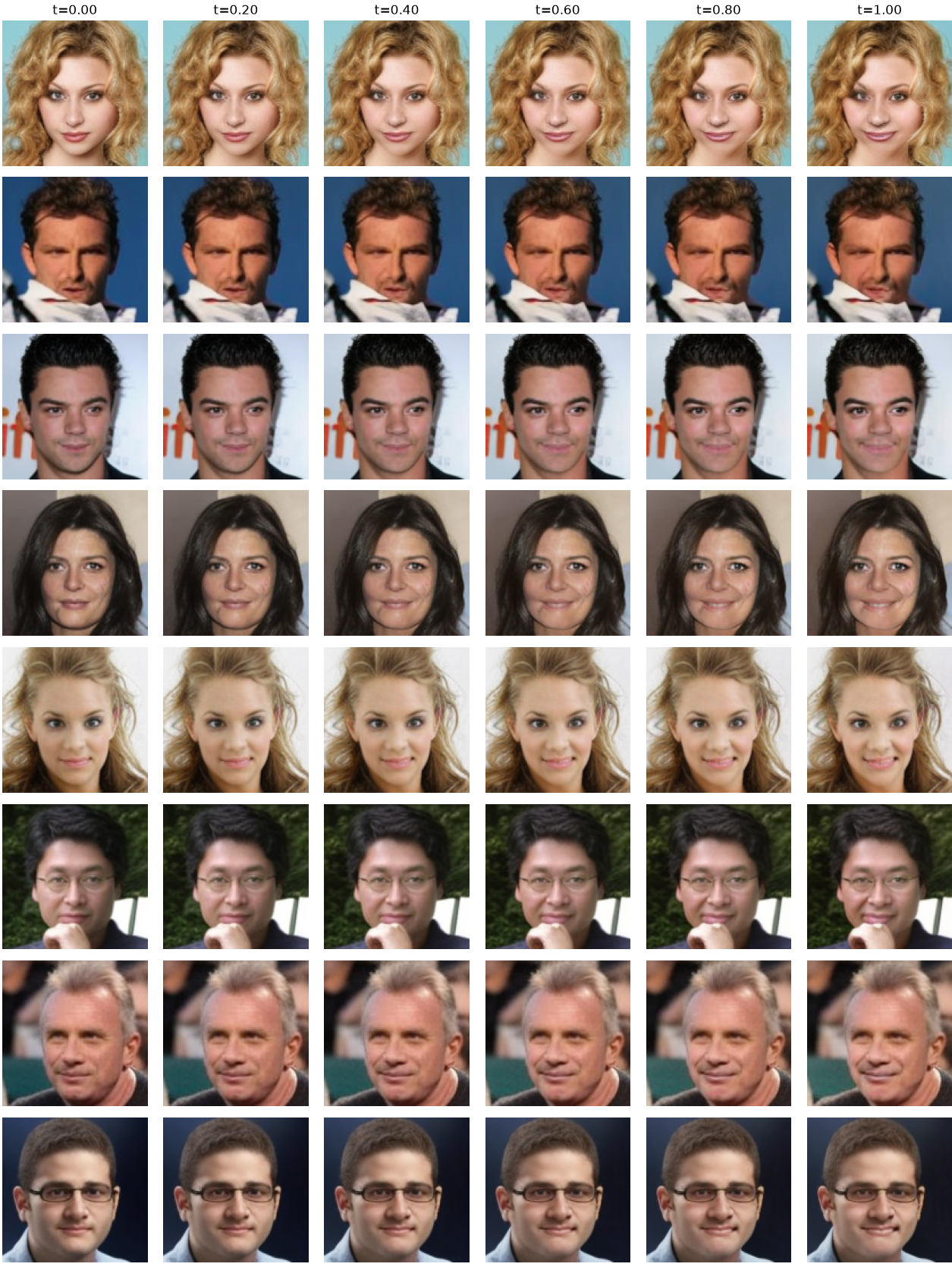} 
    \caption{Facial transport after $1200$ training iterations with the best checkpoint result. Left-to-right: the non-smiling faces are gradually transported to smiling faces.}
    \label{fig:smile_drift_only} 
\end{figure}

\bibliographystyle{apacite}

\begin{thebibliography}{99}

\bibitem{Anderson}
D.~Andersson and B.~Djehiche.
\newblock A maximum principle for SDEs of mean-field type.
\newblock \emph{Appl. Math. Optim.}, 63:341--356, 2011.

\bibitem{Hui1}
R.~Archibald, F.~Bao, Y.~Cao, and H.~Sun.
\newblock Numerical analysis for convergence of a sample-wise backpropagation
  method for training stochastic neural networks.
\newblock \emph{SIAM J. Numer. Anal.}, 62(2):593--621, 2024.

\bibitem{Hui3}
F.~Bao and H.~Sun.
\newblock Batch sample-wise stochastic optimal control via stochastic maximum
  principle.
\newblock \emph{arXiv preprint arXiv:2505.02688}, 2025.

\bibitem{feng1}
R.~Archibald, F.~Bao, Y.~Cao, and H.~Zhang.
\newblock A backward SDE method for uncertainty quantification in deep
  learning.
\newblock \emph{Discrete Contin. Dyn. Syst. Ser. S}, 15(7):2807--2835, 2022.

\bibitem{Cai}
W.~Cai, S.~Fang, and T.~Zhou.
\newblock SOC-MartNet: A martingale neural network for the
  Hamilton--Jacobi--Bellman equation without explicit
  $\inf\nolimits_{u\in U}h$ in stochastic optimal controls.
\newblock \emph{SIAM J. Sci. Comput.}, 47(4):795--819, 2025.

\bibitem{Bensoussan1_T}
A.~Bensoussan.
\newblock Lecture on stochastic control.
\newblock In \emph{Nonlinear Filtering and Stochastic Control}, volume 972 of
  \emph{Lecture Notes in Mathematics}, pages 1--62.
\newblock Springer-Verlag, Berlin, New York, 1982.

\bibitem{Oksendal1}
F.~Biagini, Y.~Hu, B.~\O ksendal, and A.~Sulem.
\newblock A stochastic maximum principle for processes driven by fractional
  Brownian motion.
\newblock \emph{Stochastic Process. Appl.}, 100(1--2):233--253, 2002.

\bibitem{Buckdahn1}
R.~Buckdahn, J.~Li, S.~Peng, and C.~Rainer.
\newblock Mean-field stochastic differential equations and associated PDEs.
\newblock \emph{Ann. Probab.}, 45(2):824--878, 2017.

\bibitem{Carmona0}
R.~Carmona.
\newblock \emph{Lectures on BSDEs, Stochastic Control, and Stochastic
  Differential Games with Financial Applications}.
\newblock SIAM, Philadelphia, PA, 2016.

\bibitem{carmona1}
R.~Carmona, J.~P.~Fouque, and L.~Sun.
\newblock Mean field games and systemic risk.
\newblock \emph{Commun. Math. Sci.}, 13(4):911--933, 2015.

\bibitem{Carmona2}
R.~Carmona and M.~Lauri\`ere.
\newblock Convergence analysis of machine learning algorithms for the numerical
  solution of mean field control and games I: The ergodic case.
\newblock \emph{SIAM J. Numer. Anal.}, 59(3):1455--1485, 2021.

\bibitem{Carmona3}
R.~Carmona and M.~Lauri\`ere.
\newblock Convergence analysis of machine learning algorithms for the numerical
  solution of mean field control and games II: The finite horizon case.
\newblock \emph{Ann. Appl. Probab.}, 32(6):4065--4105, 2022.

\bibitem{Carmona4}
R.~Carmona and M.~Lauri\`ere.
\newblock Deep learning for mean field games and mean field control with
  applications to finance.
\newblock In J.~J.~Hasbrouck and T.~J.~Sargent, editors,
  \emph{Deep Learning in Economics}, pages 369--392.
\newblock Cambridge University Press, 2023.

\bibitem{Carmona5}
B.~Acciaio, J.~Backhoff-Veraguas, and R.~Carmona.
\newblock Extended mean field control problems: Stochastic maximum principle
  and transport perspective.
\newblock \emph{SIAM J. Control Optim.}, 57(6):3666--3693, 2019.

\bibitem{jiequn2}
C.~Domingo-Enrich, J.~Han, B.~Amos, J.~Bruna, and R.~T.~Q.~Chen.
\newblock Stochastic optimal control matching.
\newblock \emph{arXiv preprint arXiv:2312.02027}, 2023.

\bibitem{Du1}
N.~Du, J.~T.~Shi, and W.~B.~Liu.
\newblock An effective gradient projection method for stochastic optimal
  control.
\newblock \emph{Int. J. Numer. Anal. Model.}, 4(4):757--774, 2013.

\bibitem{Weinan2}
W.~E, J.~Han, and A.~Jentzen.
\newblock Deep learning-based numerical methods for high-dimensional parabolic
  partial differential equations and backward stochastic differential
  equations.
\newblock \emph{Commun. Math. Stat.}, 5(4):349--380, 2017.

\bibitem{HanJentzen1}
J.~Han, A.~Jentzen, and W.~E.
\newblock Solving high-dimensional partial differential equations using deep
  learning.
\newblock \emph{Proc. Natl. Acad. Sci. U.S.A.}, 115(34):8505--8510, 2018.

\bibitem{Zhao1}
B.~Gong, W.~Liu, T.~Tang, W.~Zhao, and T.~Zhou.
\newblock An efficient gradient projection method for stochastic optimal
  control problems.
\newblock \emph{SIAM J. Numer. Anal.}, 55(6):2982--3005, 2017.

\bibitem{QHan}
Q.~Han and S.~Ji.
\newblock A multi-step algorithm for BSDEs based on a predictor-corrector
  scheme and least-squares Monte Carlo.
\newblock \emph{Methodol. Comput. Appl. Probab.}, 24(4):2403--2426, 2022.

\bibitem{jiequn1}
J.~Han and W.~E.
\newblock Deep learning approximation for stochastic control problems.
\newblock In \emph{Advances in Neural Information Processing Systems, Deep
  Reinforcement Learning Workshop}, 2016.

\bibitem{lauriere0}
M.~Han, M.~Lauri\`ere, and E.~Vanden-Eijnden.
\newblock A simulation-free deep learning approach to stochastic optimal
  control.
\newblock \emph{arXiv preprint arXiv:2410.05163}, 2024.

\bibitem{Hanson}
F.~B.~Hanson.
\newblock \emph{Applied Stochastic Processes and Control for Jump-Diffusions:
  Modeling, Analysis, and Computation}.
\newblock SIAM, Philadelphia, PA, 2007.

\bibitem{Haussmann}
U.~G.~Haussmann.
\newblock Some examples of optimal stochastic controls or: The stochastic
  maximum principle at work.
\newblock \emph{SIAM Rev.}, 23(2):292--307, 1981.

\bibitem{Kushner}
H.~J.~Kushner.
\newblock Numerical methods for stochastic control problems in continuous time.
\newblock \emph{SIAM J. Control Optim.}, 28(5):999--1026, 1990.

\bibitem{Lars}
X.~Li, D.~Verma, and L.~Ruthotto.
\newblock A neural network approach for stochastic optimal control.
\newblock \emph{SIAM J. Sci. Comput.}, 46(5):535--556, 2024.

\bibitem{Qli}
Q.~Li, L.~Chen, C.~Tai, and W.~E.
\newblock Maximum principle based algorithms for deep learning.
\newblock \emph{J. Mach. Learn. Res.}, 18(1):5998--6026, 2018.

\bibitem{Ruimeng1}
M.~Min and R.~Hu.
\newblock Signatured deep fictitious play for mean field games with common
  noise.
\newblock In \emph{Proceedings of the 38th International Conference on Machine
  Learning}, volume 139 of \emph{Proceedings of Machine Learning Research},
  pages 7731--7740.
\newblock PMLR, 2021.

\bibitem{Peng1}
S.~Peng.
\newblock Backward stochastic differential equations and applications to
  optimal control.
\newblock \emph{Appl. Math. Optim.}, 27(2):125--144, 1993.

\bibitem{Peng4}
S.~Peng.
\newblock A general stochastic maximum principle for optimal control problems.
\newblock \emph{SIAM J. Control Optim.}, 28(4):966--979, 1990.

\bibitem{Peng5}
S.~Peng and E.~Pardoux.
\newblock Backward stochastic differential equations and quasilinear parabolic
  partial differential equations.
\newblock In B.~L.~Rozovskii and R.~B.~Sowers, editors,
  \emph{Stochastic Partial Differential Equations and Their Applications},
  volume 176 of \emph{Lecture Notes in Control and Information Sciences},
  pages 200--217.
\newblock Springer, Berlin, Heidelberg, 1992.

\bibitem{pham1}
H.~Pham.
\newblock \emph{Continuous-Time Stochastic Control and Optimization with
  Financial Applications}, volume 61 of \emph{Stochastic Modelling and Applied
  Probability}.
\newblock Springer, Berlin, 2009.

\bibitem{pham2}
H.~Pham and X.~Warin.
\newblock Mean-field neural networks-based algorithms for McKean--Vlasov
  control problems.
\newblock \emph{J. Mach. Learn. Model. Comput.}, 3(2):176--214, 2024.

\bibitem{pham3}
H.~Pham and X.~Warin.
\newblock Actor-critic learning algorithms for mean-field control with moment
  neural networks.
\newblock \emph{arXiv preprint arXiv:2309.04317}, 2023.

\bibitem{pham4}
H.~Pham and X.~Wei.
\newblock Bellman equation and viscosity solutions for mean-field stochastic
  control problem.
\newblock \emph{ESAIM Control Optim. Calc. Var.}, 24(1):437--461, 2018.

\bibitem{pham5}
S.~El~Boustany, S.~Mekkaoui, Y.~Hafsi, A.~Alouadi, and H.~Pham.
\newblock Learning generative dynamics with soft law constraints:
  A McKean--Vlasov FBSDE approach.
\newblock \emph{arXiv preprint arXiv:2605.08928}, 2026.

\bibitem{Jiang1}
Y.~Jiang, R.~Xu, and L.~Zhang.
\newblock Schr{\"o}dinger bridge with transport relaxation.
\newblock \emph{arXiv preprint arXiv:2602.08118}, 2026.

\bibitem{Hui2}
H.~Sun.
\newblock Meshfree approximation for stochastic optimal control problems.
\newblock \emph{Commun. Math. Res.}, 37(3):387--420, 2021.

\bibitem{Teichmann}
H.~M.~Soner, J.~Teichmann, and Q.~Yan.
\newblock Learning algorithms for mean field optimal control.
\newblock \emph{arXiv preprint arXiv:2503.17869}, 2025.

\bibitem{Teichmann1}
C.~Herrera, F.~Krach, P.~Ruyssen, and J.~Teichmann.
\newblock Optimal stopping via randomized neural networks.
\newblock \emph{Front. Math. Finance}, 3(1):31--77, 2025.

\bibitem{Yong1}
J.~Yong and X.~Y.~Zhou.
\newblock \emph{Stochastic Controls: Hamiltonian Systems and HJB Equations},
  volume 43 of \emph{Applications of Mathematics}.
\newblock Springer, New York, 1999.

\bibitem{Jianfeng}
J.~Zhang.
\newblock \emph{Backward Stochastic Differential Equations: From Linear to
  Fully Nonlinear Theory}, volume 86 of \emph{Probability Theory and
  Stochastic Modelling}.
\newblock Springer, 2017.

\bibitem{ZhangIEEE}
R.~Zhang, Y.~Lan, G.-B.~Huang, and Z.-B.~Xu.
\newblock Universal approximation of extreme learning machine with adaptive
  growth of hidden nodes.
\newblock \emph{IEEE Trans. Neural Netw. Learn. Syst.}, 23(2):365--371, 2012.

\bibitem{Peng3}
W.~Zhao, L.~Chen, and S.~Peng.
\newblock A new kind of accurate numerical method for backward stochastic
  differential equations.
\newblock \emph{SIAM J. Sci. Comput.}, 28(4):1563--1581, 2006.

\bibitem{tsitsiklis2001regression}
J.~N.~Tsitsiklis and B.~Van~Roy.
\newblock Regression methods for pricing complex American-style options.
\newblock \emph{IEEE Trans. Neural Netw.}, 12(4):694--703, 2001.

\bibitem{carmona2015probabilistic}
R.~Carmona and D.~Lacker.
\newblock A probabilistic weak formulation of mean field games and
  applications.
\newblock \emph{Ann. Appl. Probab.}, 25(3):1189--1231, 2015.

\bibitem{carmona2018probabilistic1}
R.~Carmona and F.~Delarue.
\newblock \emph{Probabilistic Theory of Mean Field Games with Applications I},
  volume 83 of \emph{Probability Theory and Stochastic Modelling}.
\newblock Springer, Cham, 2018.

\bibitem{cardaliaguet2012}
P.~Cardaliaguet.
\newblock Notes from P.-L. Lions' lectures at the Coll\`ege de France.
\newblock Technical report, 2012.

\bibitem{DomingoEnrich1}
C.~Domingo-Enrich, M.~Drozdzal, B.~Karrer, and R.~T.~Q.~Chen.
\newblock Adjoint matching: Fine-tuning flow and diffusion generative models
  with memoryless stochastic optimal control.
\newblock In \emph{The Thirteenth International Conference on Learning
  Representations}, 2025.

\bibitem{MaTanXu1}
J.~Ma, Y.~Tan, and R.~Xu.
\newblock Schr{\"o}dinger bridge for generative AI: Soft-constrained
  formulation and convergence analysis.
\newblock \emph{arXiv preprint arXiv:2510.11829}, 2025.

\end{thebibliography}

\end{document}